\newtheorem{remark}{Remark}
\definecolor{db}{rgb}{0.0470,0,0.5294}
\definecolor{dg}{rgb}{0.0,0.392,0.0}
\definecolor{firebrick}{rgb}{0.698,0.133,0.133}
\definecolor{bl}{rgb}{0.0,0.0,0.0}
\definecolor{linen}{rgb}{0.980,0.941,0.902}
\definecolor{ivory}{rgb}{1.0,1.0,0.941}
\definecolor{aliceblue}{rgb}{0.941,0.973,1.0}
\definecolor{beige}{rgb}{0.961,0.961,0.863}
\definecolor{tan}{rgb}{0.824,0.706,0.549}
\definecolor{lightsteelblue}{rgb}{0.690,0.769,0.871}
\definecolor{paleturquoise}{rgb}{0.686,0.933,0.933}
\definecolor{lightblue}{rgb}{0.678,0.847,0.902}
\definecolor{skyblue}{rgb}{0.529,0.808,0.922}
\definecolor{palegoldenrod}{rgb}{0.933,0.910,0.667}
\definecolor{lightgoldenrod}{rgb}{0.933,0.867,0.510}
\definecolor{lightyellow}{rgb}{1.0,1.0,0.878}
\definecolor{yellow}{rgb}{1.0,1.0,0.0}
\definecolor{lightyellow1}{rgb}{1.0,1.0,0.878}
\definecolor{lemonchiffon}{rgb}{1.0,0.980,0.804}
\definecolor{myyellow}{rgb}{1,1,.9}
\definecolor{darkgreen}{rgb}{0.0,0.392,0.0}
\definecolor{darkviolet}{rgb}{0.580,0.0,0.827}
\definecolor{lightsalmon}{rgb}{1.0,0.627,0.478}
\definecolor{orange}{rgb}{1.0,0.647,0.0}
\definecolor{darkblue}{rgb}{0.00,0.00,0.55}
\numberwithin{equation}{section}
\begin{document}
	
	\title{The Semi-implicit DLN Algorithm for the Navier Stokes Equations} 
	\author{
			Wenlong Pei\thanks{
			Department of Mathematics, The Ohio State University, Columbus, OH 43210,
			USA. Email: \href{mailto:pei.176@osu.edu}{pei.176@osu.edu}. } 
		    }
	\date{\emty}
	\maketitle
	
	\begin{abstract}
		Dahlquist, Liniger, and Nevanlinna design a family of one-leg, two-step methods (the DLN method)
		that is second order, $A-$ and $G-$ stable for arbitrary, non-uniform time steps. 
		Recently, the implementation of the DLN method can be simplified by the refactorization process (adding time filters on backward Euler scheme).
		Due to these fine properties, the DLN method has strong potential for the numerical simulation of time-dependent fluid models. 
        In the report, we propose a semi-implicit DLN algorithm for the Navier Stokes equations (avoiding non-linear solver at each time step) and prove the unconditional, long-term stability and second-order convergence with the moderate time step restriction. 
		Moreover, the adaptive DLN algorithms by the required error or numerical dissipation criterion are presented to balance the accuracy and computational cost. 
		Numerical tests will be given to support the main conclusions.

	\end{abstract}
	
	
	
	
	
	
	\begin{keywords}
		Refactorization, $G$-stability, second-order, time adaptivity, semi-implicit
	\end{keywords}
	
	\begin{AMS}
		65M12, 35Q30, 76D05
	\end{AMS}
	
	\section{Introduction}
	In the simulation of time-dependent fluid models, various time-stepping schemes have been constructed based on stability and consistency. 
	The backward Euler method, unconditionally stable and easily implemented, can only have first-order accuracy \cite{FM17_arXiv,JL14_IJUQ,RLZ18_JSC,SX23-NMPDE}. 
	The trapezoidal rule or two-step backward difference method (BDF2) are both second-order accurate and widely used in computational fluid dynamics
	\cite{GGS08_SIAM_JSC,GLCS80_Springer_Berlin,GLSS78_Tech,GS98_Vol2_Wiley,Lay08_SIAM,BQV16_IJNMF,CWW19_CCP,Emm04_M2AN_MMNA,Emm04_IJNSNS,LRT12_JMFM,TTW21_NMPDE}.
	However, the trapezoid rule with some unfavorable combinations of time steps leads to instability of the numerical solutions \cite{DLN83_SIAM_JNA,Ste73_Springer_Verlag}. 
	The variable-step BDF2 method only has conditional stability if the time step 
	ratio is small enough \cite{CGG90_NM,CMR93_JCAM,Emm05_JAMC,Emm09_BIT,Gri91_TS,GP84_BIT}.
 
	Dahlquist, Liniger, and Nevanlinna propose a one-parameter family of one-leg, two-step method (thus the DLN method herein) which is $G$-stable (non-linear stable) \cite{Dah76_Tech_RIT,Dah78_BIT,Dah78_AP_NYL,DLN83_SIAM_JNA} and second-order accurate under arbitrary time grids. 
	To our knowledge, the DLN method is the \textit{only} time-stepping algorithm possessing these two properties under \textit{arbitrary time step sequence}.
	Hence its essential properties of stability and consistency have been carefully studied and explored 
	in \cite{LPT21_AML,LPT22_Tech}.
	Recently the variable step DLN method has been applied to the unsteady Stoke/Darcy model and Navier Stokes equations (NSE) and performs well in specific test 
	problems \cite{LPQT21_NMPDE,QHPL21_JCAM,QCWLL23_ANM}.

	Given the initial value problem: 
	\begin{gather}
		\label{eq:IVP}
		y'(t) = f(t,y(t)), \ \ \ 0 \leq t \leq T, \ \ \ y(0) = y_{0},
	\end{gather}
	for $y: [0,T] \rightarrow \mathbb{R}^{d}$, 
	$f: [0,T] \times \mathbb{R}^{d} \rightarrow \mathbb{R}^{d}$ and $y_{0} \in \mathbb{R}^{d}$.
	The family of one-leg, two-step DLN method (with parameter $\theta \in [0,1]$), applying to \eqref{eq:IVP} is written 
	\begin{gather}
		\label{eq:1legDLN}
		\tag{DLN}
		\sum_{\ell =0}^{2}{\alpha _{\ell }}y_{n-1+\ell }
		= \widehat{k}_{n} f \Big( \sum_{\ell =0}^{2}{\beta _{\ell }^{(n)}}t_{n-1+\ell } ,
		\sum_{\ell =0}^{2}{\beta _{\ell }^{(n)}}y_{n-1+\ell} \Big)
		,
		\qquad
		n=1,\ldots,N-1.
	\end{gather}
	Here $\{ 0 = t_{0} < t_{1} < \cdots < t_{N-1} <t_{N}=T \}_{n=0}^{N}$ are the time grids on interval $[0,T]$ and $y_{n}$ is the DLN solution to $y(t_{n})$.
	The coefficients in \eqref{eq:1legDLN} are
	\begin{gather*}
		\begin{bmatrix}
			\alpha _{2} \vspace{0.2cm} \\
			\alpha _{1} \vspace{0.2cm} \\
			\alpha _{0} 
		\end{bmatrix}
		= 
		\begin{bmatrix}
			\frac{1}{2}(\theta +1) \vspace{0.2cm} \\
			-\theta \vspace{0.2cm} \\
			\frac{1}{2}(\theta -1)
		\end{bmatrix}, \ \ \ 
		\begin{bmatrix}
			\beta _{2}^{(n)}  \vspace{0.2cm} \\
			\beta _{1}^{(n)}  \vspace{0.2cm} \\
			\beta _{0}^{(n)}
		\end{bmatrix}
		= 
		\begin{bmatrix}
			\frac{1}{4}\Big(1+\frac{1-{\theta }^{2}}{(1+{%
					\varepsilon _{n}}{\theta })^{2}}+{\varepsilon _{n}}^{2}\frac{\theta (1-{%
					\theta }^{2})}{(1+{\varepsilon _{n}}{\theta })^{2}}+\theta \Big)\vspace{0.2cm%
			} \\
			\frac{1}{2}\Big(1-\frac{1-{\theta }^{2}}{(1+{\varepsilon _{n}}{%
					\theta })^{2}}\Big)\vspace{0.2cm} \\
			\frac{1}{4}\Big(1+\frac{1-{\theta }^{2}}{(1+{%
					\varepsilon _{n}}{\theta })^{2}}-{\varepsilon _{n}}^{2}\frac{\theta (1-{%
					\theta }^{2})}{(1+{\varepsilon _{n}}{\theta })^{2}}-\theta \Big)%
		\end{bmatrix}.
	\end{gather*}
	The step variability $\varepsilon _{n} = (k_n - k_{n-1})/(k_n + k_{n-1})$ is the function of two step size. $\widehat{k}_{n} = {\alpha _{2}}k_{n}-{\alpha _{0}}k_{n-1}$ is the average time step. Given 
	sequence $\{ z_{n} \}_{n=0}^{\infty}$, we denote 
	\begin{gather*}
		z_{n,\beta} := \beta _{2}^{(n)} z_{n+1} + \beta _{1}^{(n)} z_{n} + \beta _{0}^{(n)} z_{n-1}
		= \sum_{\ell =0}^{2}{\beta _{\ell }^{(n)}} z_{n-1+\ell },
	\end{gather*}
	for convenience in the remaining paragraphs. Then the DLN method in \eqref{eq:1legDLN} can be shorten 
	\begin{gather*}
		\sum_{\ell =0}^{2}{\alpha _{\ell }}y_{n-1+\ell }
		= \widehat{k}_{n} f ( t_{n,\beta} , y_{n,\beta} ).
	\end{gather*}

	Herein we propose the variable-step, semi-implicit DLN algorithm for NSE and present a detailed numerical analysis of stability and convergence. 
	Let the open, connected and bounded set $\Omega \subset \mathbb{R}^{d}$($d=2 \text{ or } 3$) be the domain, the fluid velocity $u(x,t)$, pressure $p(x,t)$ and the source $f(x,t)$ in the NSE are governed by the following system
	\begin{gather}
		u_{t} + u \cdot \nabla u - \nu \Delta u + \nabla p = f, \ \ x \in \Omega, \ \ 0 < t \leq T, \notag \\
		\nabla \cdot u = 0, \ \ x \in \Omega, \ \ 0 < t \leq T, \ \ \ u(x,0) = u_{0}(x), \ \ 0 < t \leq T, 
		\label{eq:NSE} \\
		u = 0 \ \ \text{on } \  \partial{\Omega}, \ \ \ \int_{\Omega} p dx = 0, \ \ 0 < t \leq T. \notag
	\end{gather}
	Let $u_{n}^{h}$ and $p_{n}^{h}$ are the numerical solutions to velocity $u(x,t_{n})$ 
	and pressure $p(x,t_{n})$ respectively on certain finite element space with diameter $h$, the fully-implicit DLN algorithm for NSE approximate the non-linear term $u \cdot \nabla u $ at $t_{n+1}$ by 
	$u_{n,\beta}^{h} \cdot \nabla u_{n,\beta}^{h}$ \cite{LPQT21_NMPDE}, which results in two main disadvantages of the algorithm:
	\begin{itemize}
		\item[i.] the rigorous time step restriction like $\Delta t \leq \mathcal{O}(\nu^{3})$ for convergence,
		\item[ii.] the non-linear solver in each time step computation.
	\end{itemize}
	The above time step restriction in error analysis arises from the use of the discrete Gr$\ddot{\rm{o}}$nwall inequality \cite[p.369]{HR90_SIAM_NA} and would be very strict even under moderate viscosity 
	value (like $\nu = 1.\rm{e}-2$)\footnote[1]{To our knowledge, the restriction can not be avoided as long as the fully-implicit, time-stepping methods are applied.}. 
	Fixed point iteration and Newton's iteration are two common choices for non-linear solvers. 
	Fixed point iteration is easily implemented while Newton's iteration possesses fast convergence. 
	However, they usually cost more than solving a linear system and have the risk of divergence if the initial value for the iteration is poorly guessed.
	
	To address the two issues, we extend Baker's idea \cite{Bak76_Tech} and propose the semi-implicit DLN scheme for NSE. The essence of the idea is to extrapolate the first $u_{n,\beta}^{h}$ in the non-linear 
	term $u_{n,\beta}^{h} \cdot \nabla u_{n,\beta}^{h}$ by its second-order 
	extrapolation in time (the linear combination of $u_{n}^{h}$ and $u_{n-1}^{h}$).
	In return, the non-linear solver at each time step is replaced by a linear system. 
	Meanwhile, the strict time step restriction for convergence can be released. 
	In addition, we simplify the DLN implementation by the refactorization process (pre- and post-process on backward Euler scheme) and improve the computational efficiency by the corresponding time adaptivity algorithms.

	The paper is organized as follows. 
	Necessary notations and preliminaries are given in Section \ref{sec:Prelim}.
	In section \ref{sec:DLN-Alg}, we propose the semi-implicit DLN algorithm for NSE and its equivalent implementation by the refactorizaion process. 
	In section \ref{sec:Sta-Ana}, we will show that the numerical solution is long-term, unconditionally stable. The variable step error analysis with the loose time step restriction is given in Section \ref{sec:Err-Ana}. 
	The time adaptive algorithms (using error or numerical dissipation criterion)
	in Section \ref{sec:Implement-DLN} are provided to reduce the computational cost.
	In Section \ref{sec:Num-Test}, the Taylor-Green benchmark problem \cite{TG1937_RSL} is to confirm 
	the second-order convergence.
	The unconditional stability of the variable step DLN and the advantage of time adaptivity are verified in 
	the revised Taylor-Green problem and the 2D offset problem \cite{JL14_IJUQ}.

	\subsection{Related Work}
	Semi-implicit schemes are effective ways to simulate non-linear, time-dependent fluid models. Baker studied the semi-implicit Crank-Nicolson method and applied it to NSE \cite{Bak76_Tech} early. 
	Baker, Dougalis, and Karakashian increase the accuracy of numerical solutions by use of the three-step backward difference method coupled with extrapolation for non-linear terms \cite{BDK82_Math_Comp}. 
	Girault and Raviart prove the convergence of general linear two-step semi-implicit algorithms for NSE \cite{GR79_Springer}. 
	They solve the two-dimensional NSE by a fully discrete two-level finite element method: Crank-Nicolson extrapolation scheme on spatial-time coarse grids and backward Euler scheme on fine grids \cite{He03_SIAM_NA}. 
	Labovsky, Layton, Manica, Neda, and Rebholz add artificial viscosity stabilization to extrapolated trapezoidal finite-element method for NSE, leading to a more easily solvable linear system at each time step \cite{LLMNR09_CMAME}. 
	Ingram offers a detailed numerical analysis of the semi-implicit Crank-Nicolson scheme for NSE and proves the convergence of both velocity and pressure \cite{Ing13_IJNAM}.

	Time adaptivity based on certain criteria (required local truncation error, minimum numerical dissipation, etc.) is an optimal choice to treat the conflicts between time accuracy and efficiency. 
	Inspired by the pioneering work of Gear \cite{Gea71_PrenticeHall}, Kay, Gresho, Griffiths, and Silvester 
	implement the trapezoidal scheme for NSE in a memory efficient way and estimate the error in time by 
	the explicit two-step Adams Bashforth method (AB2 method) in the time adaptivity 
	\cite{GLCS80_Springer_Berlin,GS98_Vol2_Wiley,GGS08_SIAM_JSC,KGGS10_SIAM_JSC}.
	John and Rang propose diagonal-implicit RK methods (DIRK methods) with an embedding lower-order scheme for adaptivity and implement this algorithm for 2D laminar flow around a cylinder \cite{JR10_CMAME}. 
	Guzel and Layton add the time filter to the backward Euler method to increase the accuracy of numerical 
	solutions \cite{GL18_BIT}.
	The corresponding BE-filter adaptivity is applied to various fluid models \cite{CEK20_JSC,DLZ19_arXiv,LHLZ20_CMAME}. 
	Recently Buka\v{c}, Burkardt, Seboldt, and Trenchea refactorize the midpoint rule and adjust time steps by the revised AB2 method in fluid-structure interaction 
	problems \cite{BT20_AML,BST21_JMFM,BT21_CMAME,BPT22_IJNAM}. 
	The numerical dissipation criterion for adaptivity, proposed by Capuano, Sanderse, De Angelis, and Coppola in \cite{CSDC17}, has been tested in the DLN simulations of the NSE and the coupled Stokes-Darcy model \cite{LPQT21_NMPDE,QCWLL23_ANM}.

	\section{Preliminaries}
	\label{sec:Prelim}
	Let $\Omega \subset \mathbb{R}^{d}$ ($d = 2 \text{ or } 3$) be the domain. For $1 \leq p < \infty$,
	$L^{p}(\Omega)$ is the normed linear space containing Lebesgue measurable function $f$ such that 
	$|f|^{p}$ is integragle. For $r \in \mathbb{N}$, the Sobolev space $W^{r,p}(\Omega)$ with usual norm $\| \cdot \|_{W^{r,p}}$ contains all Lebesgue measurable functions whose weak derivatives up to $r$-th belong to $L^{p}(\Omega)$ and $W^{0,p}(\Omega)$ is the usually $L^{p}(\Omega)$ space. 
	$\| \cdot \|$ and $(\cdot, \cdot)$ denote the $L^{2}$-norm and $L^{2}$-inner product. 
	The space $H^{r}$ with norm $\| \cdot \|_{r}$ and semi-norm $| \cdot |_{r}$ denotes the Sobolev space with $p=2$. 
	The velocity space $X$ and the pressue space $Q$ for the NSE in \eqref{eq:NSE} are
	\begin{gather*}
		X := \big(H_{0}^{1}(\Omega) \big)^{d} = \big\{ v \in \big(H^{1}(\Omega) \big)^{d}: v = 0 \text{ on } \partial \Omega \big\},
		\ \ \ 
		Q := \{ q \in L^{2}(\Omega): (q,1) = 0  \}.
	\end{gather*}
	The divergence-free space for the velocity is 
	\begin{gather*}
		V := \big\{ v \in X: \nabla \cdot v = 0 \big\}.
	\end{gather*}
	For any function $v \in X \cap \big( H^{r} \big)^{d}$, the norm $\| \cdot \|_{r}$ 
	and semi-norm $| \cdot |_{r}$ are define
	\begin{gather*}
		\| v \|_{r} := \Big( \sum_{\ell=1}^{d} \| v_{i} \|_{r}^{2} \Big)^{1/2}, \ \ \ 
		| v |_{r} := \Big( \sum_{\ell=1}^{d} | v_{i} |_{r}^{2} \Big)^{1/2}.
	\end{gather*}
	$X'$ is the dual space of $X$ with the dual norm
	\begin{gather}
		\label{eq:dual-norm}
		\| f \|_{-1} := \sup_{v \in X, v \neq 0} \frac{(f,v)}{\| \nabla v \|}, \ \ \ \forall f \in X'.
	\end{gather}
	We need the Bochner space on the time interval $[0,T]$
	\begin{align*}
		L^{p} \!\big(0,T;\big(H^r \big)^{d} \big)
		&= \Big\{ f(\cdot,t) \in \big(H^r \big)^{d}:  
		\| f \|_{p,r} = \Big( \int_{0}^{T} \| f(\cdot,t)  \|_{r}^{p} dt \Big)^{1/p} < \infty \Big\}, \\
		L^{\infty} \big( 0,T;\big(H^r \big)^{d} \big) 
		&= \Big\{ f(\cdot,t) \in \big(H^r \big)^{d}: 
		\|f\|_{\infty,r} = \sup_{0 < t <T} \|f(\cdot, t) \|_{r} < \infty \Big\}, \\
		L^{p}\! \big(0,T;X'\big) &= \Big\{ f(\cdot,t) \in X' :  
		\| f \|_{p,-1} = \Big( \int_{0}^{T} \| f(\cdot,t)  \|_{-1}^{p} dt \Big)^{1/p} < \infty \Big\}, 
	\end{align*}
	and the discrete Bochner space with the time grids $\{t_{n} \}_{n=0}^{N}$ on the 
	time interval $[0,T]$
	\begin{align*}
		\ell^{\infty} \big( 0,N;\big(H^r \big)^{d}  \big)
		&= \big\{ f(\cdot,t) \in \big(H^r \big)^{d}: 
		\| |f| \|_{\infty,r} < \infty \big\},
		\\
		\ell^{\infty,\beta} \big( 0,N;\big(H^r \big)^{d} \big)
		&= \big\{ f(\cdot,t) \in \big(H^r \big)^{d}: 
		\| |f| \|_{\infty,r,\beta} < \infty \big\}, \\
		\ell^{p,\beta} \big( 0,N;\big(H^r \big)^{d} \big) 
		&= \big\{ f(\cdot,t) \in \big(H^r \big)^{d}: \| |f| \|_{p,r,\beta} < \infty \big\}, \\
		\ell^{p,\beta} \big( 0,N; X' \big) 
		&= \big\{ f(\cdot,t) \in X': \| |f| \|_{p,-1,\beta} < \infty \big\},
	\end{align*}
	where the corresponding discrete norms are 
	\begin{gather}
		\| |f| \|_{\infty,r} := \max_{0 \leq n \leq N} \| f(\cdot, t_{n}) \|_{r}, \ \ \ 
		\| |f| \|_{\infty,r,\beta} := \max_{1 \leq n \leq N-1} \| f(\cdot, t_{n,\beta}) \|_{r} \notag \\
		\| |f| \|_{p,r,\beta} 
		:= \Big( \sum_{n=1}^{N-1} (k_{n} + k_{n-1})\| f(\cdot,t_{n,\beta}) \|_{r}^{p} \Big)^{1/p}. \notag \\
		\| |f| \|_{p,-1,\beta} 
		:= \Big( \sum_{n=1}^{N-1} (k_{n} + k_{n-1})\| f(\cdot,t_{n,\beta}) \|_{-1}^{p} \Big)^{1/p}
		\label{eq:def-norm-dis}
	\end{gather}
	The discrete norm $\| | \cdot | \|_{p,r,\beta}$ in \eqref{eq:def-norm-dis} is the form of Riemann sum in which the function $f$ is evaluated at $t_{n,\beta} \in [t_{n-1},t_{n+1}]$.
	For any $u,v,w \in \big(H^{1}(\Omega) \big)^{d}$, we define the skew-symmetric, non-linear operator 
	\begin{gather}
		\label{eq:non-linear-b}
		b(u,v,w) := \frac{1}{2} (u \cdot \nabla v, w) - \frac{1}{2} (u \cdot \nabla w, v).
	\end{gather}
	\begin{confidential}
		\color{darkblue}
		\begin{gather*}
			b(u,v,w) = (u \cdot \nabla v, w) + \frac{1}{2} \big( (\nabla \cdot u) v, w \big)
			- \frac{1}{2} \int_{\partial{\Omega}} (u \cdot \overrightarrow{n} ) v \cdot w dS,
		\end{gather*}
		where $\overrightarrow{n}$ is the outward unit normal vector of $\partial{\Omega}$.
		\normalcolor
	\end{confidential}
	We apply divergence theorem and integration to \eqref{eq:non-linear-b}
	\begin{gather*}
		b(u,v,w) = (u \cdot \nabla v, w) + \frac{1}{2} \big( (\nabla \cdot u) v, w \big),
		\ \ \ \text{for any } u,v,w \in X.
	\end{gather*}
	Thus if $u \in V$, $b(u,v,w) = (u \cdot \nabla v, w)$.
	We have the following lemma about the bounds of the operator $b$.
	\begin{lemma}
		For any $u,v,w \in H^{1}$,
		\begin{gather}
			|(u\cdot \nabla v, w)| \leq C
			\left\{
			\begin{aligned}
				& \| u \|_{1} |v|_{1} \| w \|_{1}   \\
				& \| u \| \| v \|_{2} \| w \|_{1} & & \forall v \in H^{2} \\
				& \| u \|_{2} |v|_{1} \| w \| & & \forall u \in H^{2} 
			\end{aligned}
			\right. , \label{eq:b-bound1}     \\
			|b(u,v,w)| \leq C \big( \| u \| \| u \|_{1} \big)^{1/2} \| v \|_{1} \| w \|_{1},  
			\label{eq:b-bound2}	
		\end{gather}
		Moreover, if $u,v,w \in X$,
		\begin{gather}
			\label{eq:b-bound3}
        	|b(u,v,w)| \leq C
        	\left\{
        	\begin{aligned}
        		&  \| u \|_{1} \| v \|_{1} \big( \| w \| \| w \|_{1} \big)^{1/2}   \\
        		& \| u \|_{1} \| v \|_{2} \| w \| & & \forall v \in H^{2} 
        	\end{aligned}
			\right. .
        \end{gather}
	\end{lemma}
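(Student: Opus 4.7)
The plan is to derive all three inequalities from H\"older's inequality combined with standard Sobolev embeddings and the Ladyzhenskaya--Gagliardo--Nirenberg interpolation inequality, together with the Poincar\'e inequality on $X = (H_0^1(\Omega))^d$.

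First I would prove the three variants of \eqref{eq:b-bound1} by splitting the $L^1$ integral $|(u\cdot\nabla v, w)| = |\int u\cdot\nabla v\, w|$ via H\"older with three exponents $(p,q,r)$ satisfying $1/p+1/q+1/r = 1$. For the first bound I take $(p,q,r) = (4,2,4)$ and use the embedding $H^1 \hookrightarrow L^4$ (valid for $d=2,3$), giving $\|u\|_{L^4}\|\nabla v\|\|w\|_{L^4} \le C \|u\|_1 |v|_1 \|w\|_1$. For the second bound I take $(p,q,r) = (2,3,6)$ and use the embeddings $H^1 \hookrightarrow L^3$ (so $\|\nabla v\|_{L^3} \le C\|v\|_2$) and $H^1 \hookrightarrow L^6$ (valid in $d \le 3$). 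For the third bound I take $(p,q,r) = (\infty,2,2)$ and use $H^2 \hookrightarrow L^\infty$ (valid in $d \le 3$) to control $\|u\|_{L^\infty} \le C\|u\|_2$.

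Next, for \eqref{eq:b-bound2} I would expand $b$ per its definition \eqref{eq:non-linear-b} as the difference of two trilinear forms. I apply H\"older with exponents $(4,2,4)$ to each of $(u\cdot\nabla v,w)$ and $(u\cdot\nabla w,v)$, obtaining bounds of the form $\|u\|_{L^4} \|\nabla v\| \|w\|_{L^4}$ and $\|u\|_{L^4} \|\nabla w\| \|v\|_{L^4}$. The key step is then to invoke the Ladyzhenskaya/Gagliardo--Nirenberg inequality $\|u\|_{L^4} \le C(\|u\|\,\|u\|_1)^{1/2}$ on $u$ only, while the other factors are bounded directly through $\|\cdot\|_{L^4} \le C\|\cdot\|_1$. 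Combining yields the asymmetric bound $|b(u,v,w)| \le C(\|u\|\,\|u\|_1)^{1/2}\|v\|_1\|w\|_1$.

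Finally, for \eqref{eq:b-bound3} I work with $u,v,w \in X$ so that the Poincar\'e inequality $\|\cdot\| \le C\|\nabla\cdot\|$ lets me pass freely between $\|\cdot\|$ and $\|\cdot\|_1$ on $X$. For the first estimate I again expand $b$, apply H\"older with $(4,2,4)$, and use Ladyzhenskaya on the factor whose half-power of $\|w\|\,\|w\|_1$ is desired, using Poincar\'e to replace $|v|_1$ by $\|v\|_1$ and $\|u\|_{L^4} \le C\|u\|_1$ for the remaining factor. For the second estimate I reuse the third variant of \eqref{eq:b-bound1}, but now apply it with the roles rearranged (passing to $(u\cdot\nabla w, v)$ via integration by parts when needed) and exploit Poincar\'e on $X$ to rewrite $\|\cdot\|_1$ factors consistently. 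The main obstacle is keeping the dimensional bookkeeping and choice of H\"older exponents consistent across the cases so that each claimed product of norms matches; once the right exponent triple is chosen for each bound, the rest is routine application of Sobolev embeddings.
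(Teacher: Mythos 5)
Your overall strategy (H\"older plus Sobolev embeddings plus interpolation) is the standard one and matches what the cited reference does --- the paper itself only points to Ingram \cite{Ing13_IJNAM} and gives no proof --- and your treatment of the three cases of \eqref{eq:b-bound1} is correct for $d=2,3$. However, there is a genuine gap in your proofs of \eqref{eq:b-bound2} and the first line of \eqref{eq:b-bound3}: the inequality $\|u\|_{L^4}\leq C\big(\|u\|\,\|u\|_1\big)^{1/2}$ is the \emph{two-dimensional} Ladyzhenskaya inequality and is false for $d=3$. In 3D the Gagliardo--Nirenberg exponent is $\|u\|_{L^4}\leq C\|u\|^{1/4}\|u\|_{1}^{3/4}$, and a concentration argument ($u_\varepsilon(x)=\phi(x/\varepsilon)$ gives $\|u_\varepsilon\|_{L^4}\sim\varepsilon^{3/4}$ while $(\|u_\varepsilon\|\,\|u_\varepsilon\|_1)^{1/2}\sim\varepsilon$) shows the half--half form cannot hold. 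Since the paper explicitly allows $d=3$, your $(4,2,4)$ split does not deliver \eqref{eq:b-bound2}. The dimension-uniform route is the $(3,2,6)$ split, $|(u\cdot\nabla v,w)|\leq\|u\|_{L^3}\|\nabla v\|\,\|w\|_{L^6}$, combined with the interpolation $\|u\|_{L^3}\leq\|u\|^{1/2}\|u\|_{L^6}^{1/2}\leq C\big(\|u\|\,\|u\|_1\big)^{1/2}$, which is valid for $d\leq 3$.

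A second, related gap concerns the first estimate of \eqref{eq:b-bound3}. The form $b$ contains the term $-\tfrac12(u\cdot\nabla w,v)$, in which the derivative falls on $w$; after any H\"older split this contributes a full factor $\|\nabla w\|$, which cannot be interpolated down to $\big(\|w\|\,\|w\|_1\big)^{1/2}$. You must first move the derivative off $w$, e.g.\ by the identity $(u\cdot\nabla w,v)=-\big((\nabla\cdot u)v,w\big)-(u\cdot\nabla v,w)$ for $u,v\in X$ (the boundary term vanishes), after which both resulting terms can be estimated with $w$ measured only in $L^3$, giving the desired half power via the same $L^2$--$L^6$ interpolation. You invoke integration by parts only for the second line of \eqref{eq:b-bound3}, but it is equally indispensable for the first; with these two corrections (the $(3,2,6)$ split in place of $(4,2,4)$, and the integration by parts on the $\nabla w$ term) your argument goes through in both dimensions.
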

	\begin{proof}
		See \cite[p.273-275]{Ing13_IJNAM}.
	\end{proof}

	For spatial discretization, $X^{h} \subset X$ and $Q^{h} \subset Q$ are certain finite element spaces for velocity and pressure respectively based on the edge-to-edge triangulation of domain $\Omega$ (with the maximum diameter of the triangles $h>0$).
	$X_{h}$ and $Q_{h}$ satisfy the discrete inf-sup condition, i.e.
	\begin{gather}
		\label{eq:inf-sup}
		\inf_{q^{h} \in Q^{h}} \sup_{v^{h} \in X^{h}} \frac{(\nabla \cdot v^{h}, q^{h})}{\| \nabla v^{h}\| \| q^{h} \|} \geq C, 
	\end{gather}
	for some positive constant $C$. Typical examples of such finite element spaces are Taylor-Hood element spaces and Mini element spaces.
	The approximation theorem for $X^{h}$ and $Q^{h}$ is 
	\begin{align}
		\label{eq:approx-thm}
		& \inf_{v^{h} \in X^{h}} \| u - v^{h} \|_{1} \leq Ch^{r} \| u \|_{r+1}, \ \ \ 
		r \geq 0, \ u \in (H^{r+1})^{d} \cap X,   \notag \\
		& \inf_{q^{h} \in Q^{h}} \| p - q^{h} \| \leq Ch^{s+1} \| p \|_{s+1}, \ \ \ 
		s \geq 0, \ p \in H^{s+1} \cap Q,   
	\end{align}
	where $r$ and $s$ are the polynomial degrees of $X^{h}$ and $Q^{h}$ respectively. 
	The inverse inequality for $X^{h}$ is
	\begin{gather}
		\label{eq:inverse-estimator}
		|v^{h}|_{1} \leq Ch^{-1} \|v^{h} \|, \ \ \ \forall v^{h} \in X^{h}.
	\end{gather}
	The discrete divergence-free space is 
	\begin{gather*}
		V^{h} := \{ v^{h} \in X^{h}: (q^{h}, \nabla \cdot v^{h}) = 0, \ \ \ \forall q^{h} \in Q^{h} \}.
	\end{gather*}
	For any pair $(u,p) \in V \times Q$, the Stokes projection 
	$(P_{S}^{(u)} u, P_{S}^{(p)}p) \in V^{h} \times Q^{h}$ is defined as the solution to the problem 
	\begin{align*}
		\nu( \nabla u, \nabla v^{h}) - (p, \nabla \cdot v^{h}) 
		&= \nu( \nabla P_{S}^{(u)}u , \nabla v^{h}) - (P_{S}^{(p)}p, \nabla \cdot v^{h}), \\
		(q^{h}, \nabla \cdot P_{S}^{(u)}u) &= 0, \qquad \qquad \qquad \qquad \qquad \qquad \qquad
		\forall \ (v^{h},q^{h}) \in X^{h} \times Q^{h}. 
	\end{align*}
	The Stokes projection has the following approximations (see \cite{GR86_Springer,Joh16_Springer}
	for proof)
	\begin{gather}
		\| u -  P_{S}^{(u)}u \| \leq C h \big( \nu^{-1} \inf_{q^{h} \in Q^{h}} \|p - q^{h} \| 
		+ \inf_{v^{h} \in X^{h}} | u - v^{h} |_{1}\big), \notag \\
		\| u -  P_{S}^{(u)}u \|_{1} \leq C \big( \nu^{-1} \inf_{q^{h} \in Q^{h}} \|p - q^{h} \| 
		+ \inf_{v^{h} \in X^{h}} | u - v^{h} |_{1} \big).  \label{eq:Stoke-Approx} 
	\end{gather}

	\section{The algorithm}
	\label{sec:DLN-Alg}
	Let $u_{n}^{h} \in X^{h}$ and $p_{n}^{h} \in Q^{h}$ be the numerical solutions of velocity $u(x,t_{n})$ 
	and pressure $p(x,t_{n})$ respectively. 
	Then the semi-implicit DLN algorithm for the NSE in \eqref{eq:NSE} is: given two previous solutions $u_{n}^{h}, u_{n-1}^{h} \in X^{h}$, $p_{n}^{h}, p_{n-1}^{h} \in Q^{h}$, find $u_{n+1}^{h} \in X^{h}$ and $p_{n+1}^{h} \in Q^{h}$ such that for all $(v^{h},q^{h}) \in X^{h} \times Q^{h}$
	\begin{gather}
		\Big(\! \frac{\alpha_{2} u_{n\!+\!1}^{h} \!+\! \alpha_{1} u_{n}^{h} \!+\! \alpha_{0} u_{n\!-\!1}^{h}}{\widehat{k}_{n}}\!,\!v^{h} \!\Big)
		\!+\! b \big( \widetilde{u}_{n}^{h}, u_{n\!,\!\beta}^{h}\!, v^{h} \big)  
		\!+\! \nu \big( \nabla u_{n\!,\!\beta}^{h}\!,\! \nabla v^{h} \big)
		\!-\! \big( p_{n\!,\!\beta}^{h}\!,\! \nabla \!\cdot \!v^{h} \big) 
		\!=\! \big( f_{n\!,\!\beta}\!, v^{h} \big) ,   \notag \\
		\big( \nabla \cdot u_{n,\beta}^{h}, q^{h} \big) = 0,  \label{eq:DLN-Semi-Alg}
	\end{gather} 
	where the second-order, linear extrapolation $\widetilde{u}_{n}^{h}$ for $u_{n,\beta}^{h}$ is
	\begin{gather*}
		\widetilde{u}_{n}^{h} 
		= \beta _{2}^{(n)} \Big[(1 + \frac{k_{n}}{k_{n-1}}) u_{n}^{h} - \frac{k_{n}}{k_{n-1}} u_{n-1}^{h} \Big]
		+ \beta _{1}^{(n)} u_{n}^{h} + \beta _{0}^{(n)} u_{n-1}^{h} 
		\approx u_{n,\beta}^{h}.
	\end{gather*}
	The above semi-implicit DLN algorithm in \eqref{eq:DLN-Semi-Alg} can be simplified by the following refactorizaion process (See \cite{LPT21_AML} for the proof of equivalence)

	Step 1. Pre-possess: 
	\begin{gather*}
			u_{n}^{h\!,\!\rm{old}} = a_{1}^{(n)} u_{n}^{h} + a_{0}^{(n)} u_{n-1}^{h}, \ \ \ 
			\widehat{k}_{n}^{\rm{BE}} = b^{(n)} \widehat{k}_{n}, \\
			\widetilde{u}_{n}^{h} 
		= \beta _{2}^{(n)} \Big[(1 + \frac{k_{n}}{k_{n-1}}) u_{n}^{h} - \frac{k_{n}}{k_{n-1}} u_{n-1}^{h} \Big]
		+ \beta _{1}^{(n)} u_{n}^{h} + \beta _{0}^{(n)} u_{n-1}^{h},
		\end{gather*}

	Step 2. Semi-implicit backward Euler solver: solve for $u_{n+1}^{h,\rm{temp}}$ and $p_{n+1}^{h,\rm{temp}}$
	\begin{gather*}
			\Big(\! \frac{u_{n\!+\!1}^{h\!,\!\rm{temp}}\!-\! u_{n}^{h\!,\!\rm{old}}}{\widehat{k}_{n}^{\rm{BE}}}, \!v^{h} \!\Big)
			\!+\! b \big(\! \widetilde{u}_{n}^{h},\! u_{n\!+\!1}^{h\!,\!\rm{temp}}, \!v^{h} \!\big)
			\!+\! \nu \!\big(\! \nabla u_{n\!+\!1}^{h\!,\!\rm{temp}}, \!\nabla v^{h} \!\big) 
			\!-\! \big(\! p_{n\!+\!1}^{h\!,\!\rm{temp}}, \!\nabla \!\cdot \! v^{h} \!\big)
			\!=\! (\! f_{n,\!\beta},\! v^{h} \!),  \\
			( \nabla \cdot u_{n+1}^{h,\rm{temp}}, q^{h} ) = 0,
		\end{gather*}

	Step 3. Post-possess:
	\begin{gather*}
			u_{n+1}^{h} = c_{2}^{(n)} u_{n+1}^{h\!,\!\rm{temp}} + c_{1}^{(n)} u_{n}^{h} 
			+ c_{0}^{(n)} u_{n-1}^{h}, \ 
			p_{n+1}^{h} = c_{2}^{(n)} p_{n+1}^{h\!,\!\rm{temp}} + c_{1}^{(n)} p_{n}^{h} 
			+ c_{0}^{(n)} p_{n-1}^{h},
	\end{gather*}
	where the coefficents in the refactorizaion process are 
	\begin{align*}
		&		a_{1}^{(n)} = \beta_{1}^{(n)} - 
		\alpha_{1} \beta_{2}^{(n)} / \alpha_{2}, 
				\quad
				a_{0}^{(n)} = \beta_{0}^{(n)} - 
		\alpha_{0} \beta_{2}^{(n)} / \alpha_{2} , 
				\quad
				b^{(n)} = 
		\beta_{2}^{(n)} / \alpha_{2} , 
		\\	
		&		
		c_{2}^{(n)} = 
		1 / \beta_{2}^{(n)}, 
				\quad
				c_{1}^{(n)} = -
		\beta_{1}^{(n)} / \beta_{2}^{(n)} ,
				\quad
				c_{0}^{(n)} = 
		- \beta_{0}^{(n)} / \beta_{2}^{(n)} .
	\end{align*}

	\section{Numerical Analysis}
	For numerical analysis, we need the following two lemma about the stability and consistency of the DLN method.
	\begin{lemma}
		Let $Y$ be the inner product space over $\mathbb{R}$ with the inner product $(\cdot, \cdot)_{Y}$ and the induced norm $\| \cdot \|_{Y}$. For any sequence $\{ y_{n} \}_{n=0}^{N}$ in $Y$, $\theta \in [0,1]$ and $n \in \{ 1,2, \cdots, N-1 \}$
		\begin{align}
			\label{eq:G-stab}
			\Big(\sum_{\ell\!=\!0}^{2}{\alpha_{\ell }}y_{n\!-\!1\!+\!\ell },\sum_{\ell \!=\!0}^{2}{%
				\beta_{\ell }^{(n)}}y_{n\!-\!1\!+\!\ell }\Big)_{\!Y}
			\!=\!
			\begin{Vmatrix}
				{y_{n\!+\!1}} \\
				{y_{n}}
			\end{Vmatrix}
			_{G(\!\theta \!)}^{2} \!-\!
			\begin{Vmatrix}
				{y_{n}} \\
				{y_{n\!-\!1}}
			\end{Vmatrix}
			_{G(\!\theta \!)}^{2} 
			\!+\! \Big\|\!\sum_{\ell \!=\!0}^{2}{\gamma_{\ell }^{(n)}}y_{n\!-\!1\!+\!\ell} \!\Big\|_{\!Y}^{2},
		\end{align}
		where the $\| \cdot \|_{G(\theta)}$-norm is
		\begin{align}  \label{eq:G-norm}
			\begin{Vmatrix}
				u \\
				v\end{Vmatrix}_{G(\theta)}^{2}
			=& \frac{1}{4} (1+{\theta})\| u \|_{Y}^{2}
			+ \frac{1}{4} (1 - \theta ) \| v \|_{Y}^{2}
			\qquad
			\forall u,v\in Y.
		\end{align}
		and the coefficents $\gamma_{\ell }^{(n)}\ (\ell =0,1,2)$ are
		\begin{align}
			\label{eq:G-coeff}
			\gamma_{1}^{(n)}=-\frac{\sqrt{\theta \left( 1-{\theta }^{2}\right) }}{\sqrt{2}%
				(1+\varepsilon _{n}\theta )},\quad \gamma_{2}^{(n)}=-\frac{1-\varepsilon _{n}}{2}%
			\gamma_{1}^{(n)},\quad \gamma_{0}^{(n)}=-\frac{1+\varepsilon _{n}}{2}\gamma_{1}^{(n)},
		\end{align}
		By the above identity in \eqref{eq:G-stab}, the whole family of variable step,
		one-leg \eqref{eq:1legDLN} methods are G-stable (see \cite[p.2]{Dah76_Tech_RIT} for the definition).
	\end{lemma}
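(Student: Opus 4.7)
The plan is to reduce the identity \eqref{eq:G-stab} to a purely algebraic equality between $3\times 3$ symmetric matrices, independent of the inner product space $Y$. Each term in \eqref{eq:G-stab} is a quadratic form in $y_{n-1},y_n,y_{n+1}$ whose coefficients are rational functions of $\theta$ and $\varepsilon_n$ alone, so it suffices to compare the coefficients of the six bilinear monomials $(y_{n-1+i},y_{n-1+j})_Y$ for $0 \le i \le j \le 2$.

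First I would expand the left-hand side as $\sum_{i,j=0}^{2}\alpha_i\beta_j^{(n)}(y_{n-1+i},y_{n-1+j})_Y$ and symmetrize to obtain a symmetric coefficient matrix $A=A(\theta,\varepsilon_n)$. Next, using \eqref{eq:G-norm}, I would evaluate the telescope $\|(y_{n+1},y_n)^{T}\|_{G(\theta)}^{2}-\|(y_n,y_{n-1})^{T}\|_{G(\theta)}^{2}$ as a diagonal matrix $D$ with entries $-\tfrac{1-\theta}{4},-\tfrac{\theta}{2},\tfrac{1+\theta}{4}$ corresponding to $\|y_{n-1}\|_Y^{2},\|y_n\|_Y^{2},\|y_{n+1}\|_Y^{2}$. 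Finally I would write the residual $\big\|\sum_{\ell}\gamma_\ell^{(n)}y_{n-1+\ell}\big\|_Y^{2}$ as the rank-one outer product $\gamma\gamma^{T}$ with $\gamma=(\gamma_0^{(n)},\gamma_1^{(n)},\gamma_2^{(n)})^{T}$. The claim then becomes the matrix equality $A = D + \gamma\gamma^{T}$, which decouples into three diagonal conditions that pin down $(\gamma_\ell^{(n)})^{2}$ and three off-diagonal conditions that enforce the rank-one structure (equivalently, vanishing of the three $2\times 2$ minors of $A-D$).

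To organize the algebra, I would first verify the matrix identity in the uniform-step case $\varepsilon_n = 0$, where the $\beta_\ell^{(n)}$ reduce to polynomials in $\theta$ and the formulas \eqref{eq:G-coeff} collapse to $\gamma_1^{(n)} = -\sqrt{\theta(1-\theta^{2})/2}$ with $\gamma_0^{(n)} = \gamma_2^{(n)} = -\gamma_1^{(n)}/2$; here the rank-one factorization of $A-D$ can be checked by hand. The main obstacle is then propagating the verification to variable steps: the $\beta_\ell^{(n)}$ contain the rational denominator $(1+\varepsilon_n\theta)^{2}$, so checking $A = D + \gamma\gamma^{T}$ amounts to clearing denominators and matching polynomial identities in $\theta$ and $\varepsilon_n$. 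The asymmetry $\gamma_2^{(n)}:\gamma_0^{(n)}=(1-\varepsilon_n):(1+\varepsilon_n)$ in \eqref{eq:G-coeff} is precisely what absorbs the $\varepsilon_n$-dependence of the off-diagonal part of $A$, and once this ratio is fixed the remaining verifications are routine though tedious. Alternatively, the identity can be quoted directly from the original Dahlquist derivation \cite{Dah76_Tech_RIT,Dah78_BIT,DLN83_SIAM_JNA}, where it serves as the defining property of $G$-stability for the DLN family.
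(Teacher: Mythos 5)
Your proposal is correct and matches the paper's approach: the paper's proof is simply the remark that the identity is ``an algebraic calculation (implicit in [DLN83])'', and your reduction to the matrix identity $A = D + \gamma\gamma^{T}$ (with the diagonal telescoping matrix $D = \mathrm{diag}(-\tfrac{1-\theta}{4},-\tfrac{\theta}{2},\tfrac{1+\theta}{4})$ and the rank-one residual) is a sound and well-organized way to carry out exactly that calculation. A spot check of the $y_{n+1}$ diagonal entry, $\alpha_2\beta_2^{(n)}=\tfrac{1+\theta}{4}+(\gamma_2^{(n)})^2$ at $\varepsilon_n=0$, confirms the bookkeeping.
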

	\begin{proof}
		The proof of identity in \eqref{eq:G-stab} (implicit in \cite{DLN83_SIAM_JNA}) is an algebraic calculation. 
	\end{proof}
	\begin{lemma}
		\label{lemma:DLN-consistency}
		Given Banach space $Y$ with the norm $\| \cdot \|_{Y}$, time grids $\{ t_{n} \}_{n=0}^{N}$ on time interval $[0,T]$ and the mapping $u: [0,T] \rightarrow Y$, $u_{n}$ denotes $u(t_{n})$ and 
		$\widetilde{u}_{n}$ represents second-order, linear extrapolation of $u_{n,\beta}$ in time, i.e.
		\begin{gather}
			\label{eq:2nd-approx}
			\widetilde{u}_{n} 
			= \beta _{2}^{(n)} \Big[(1 + \frac{k_{n}}{k_{n-1}}) u_{n} - \frac{k_{n}}{k_{n-1}} u_{n-1} \Big] + \beta _{1}^{(n)} u_{n} + \beta _{0}^{(n)} u_{n-1}.
		\end{gather}
		If the mapping $u(t)$ is smooth enough about variable $t$, then for any $\theta \in [0,1]$
		\begin{gather}
			\big\| u_{n,\beta} - u ( t_{n,\beta} ) \big\|_{Y}^{2}
			\leq
			C(\theta) ( k_{n} + k_{n-1} )^{3}  \int_{t_{n-1}}^{t_{n+1}} \| u_{tt} \|_{Y}^{2} dt, \notag \\
			\big\| \widetilde{u}_{n} - u ( t_{n,\beta} ) \big\|_{Y}^{2}
			\leq
			C(\theta) ( k_{n} + k_{n-1} )^{3}  \int_{t_{n-1}}^{t_{n+1}} \| u_{tt} \|_{Y}^{2} dt,
			\label{eq:DLN-Consistency1}
		\end{gather}
		Moreover, if there exists constants $C_{L},C_{U}>0$ such that the ratio of time steps satisfies
		\begin{gather*}
			0 < C_{L} \leq \frac{k_{n}}{k_{n-1}} \leq C_{U},		
		\end{gather*}
		then 
		\begin{gather}
			\label{eq:DLN-Consistency2}
			\Big\| \frac{1}{\widehat{k}_{n}} \sum_{\ell=0}^{2} \alpha_{\ell} u_{n-1+\ell} 
			- u_{t} ( t_{n,\beta} ) \Big\|_{Y}^{2}
			\leq
			C ( \theta ) ( k_{n} + k_{n-1} )^{3} \int_{t_{n-1}}^{t_{n+1}} \| u_{ttt} \|_{Y}^{2} dt.
		\end{gather}
	\end{lemma}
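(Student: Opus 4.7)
The plan is to obtain all three estimates by Taylor-expanding $u$ around the evaluation point $t_{n,\beta}$ and exploiting three algebraic identities satisfied by the DLN coefficients: (i) $\sum_{\ell} \beta_\ell^{(n)} = 1$ together with $\sum_{\ell} \beta_\ell^{(n)} t_{n-1+\ell} = t_{n,\beta}$ (the definition of $t_{n,\beta}$); (ii) $\sum_{\ell} \alpha_\ell = 0$ and $\sum_{\ell} \alpha_\ell t_{n-1+\ell} = \widehat{k}_n$ (direct computation from the coefficient formulas); and (iii) the second-moment cancellation $\sum_{\ell} \alpha_\ell (t_{n-1+\ell}-t_{n,\beta})^{2} = 0$, which is the algebraic encoding of the second-order accuracy of the DLN family on variable grids.

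For the first estimate I write the integral form of Taylor's theorem,
\begin{equation*}
u_{n-1+\ell} = u(t_{n,\beta}) + u_t(t_{n,\beta})(t_{n-1+\ell}-t_{n,\beta}) + \int_{t_{n,\beta}}^{t_{n-1+\ell}} (t_{n-1+\ell}-s)\, u_{tt}(s)\, ds,
\end{equation*}
multiply by $\beta_\ell^{(n)}$ and sum; identities (i) annihilate the first two terms, and the triangle inequality in $Y$ combined with Cauchy--Schwarz on each time integral bounds the remainder by $C(\theta)(k_n+k_{n-1})^{3}\int_{t_{n-1}}^{t_{n+1}} \|u_{tt}\|_{Y}^{2} dt$, using $|t_{n-1+\ell}-s|\le k_n+k_{n-1}$ together with the $\theta$-dependent boundedness of the $\beta_\ell^{(n)}$.

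For the second estimate I observe that if $L_n(t)$ is the linear interpolant of $(t_{n-1},u_{n-1})$ and $(t_n,u_n)$, then $(1+k_n/k_{n-1})u_n - (k_n/k_{n-1})u_{n-1} = L_n(t_{n+1})$, hence
\begin{equation*}
\widetilde{u}_n = \sum_{\ell} \beta_\ell^{(n)} L_n(t_{n-1+\ell}) = L_n(t_{n,\beta})
\end{equation*}
by the linearity of $L_n$ and (i). Thus $\widetilde{u}_n - u(t_{n,\beta})$ is the error of linear extrapolation from $\{t_{n-1},t_n\}$ to $t_{n,\beta}$; writing this error in integral-remainder form and applying Cauchy--Schwarz gives the $(k_n+k_{n-1})^3$ bound, the key point being that any apparent $1/k_{n-1}$ prefactor multiplies an integral over $[t_{n-1},t_n]$ whose Cauchy--Schwarz estimate supplies a compensating $k_{n-1}^{3/2}$. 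For the third estimate I expand one order further,
\begin{equation*}
u_{n-1+\ell} = u(t_{n,\beta}) + u_t(t_{n,\beta})(t_{n-1+\ell}-t_{n,\beta}) + \tfrac{1}{2} u_{tt}(t_{n,\beta})(t_{n-1+\ell}-t_{n,\beta})^{2} + \tfrac{1}{2}\int_{t_{n,\beta}}^{t_{n-1+\ell}} (t_{n-1+\ell}-s)^{2}\, u_{ttt}(s)\, ds,
\end{equation*}
multiply by $\alpha_\ell/\widehat{k}_n$ and sum: (ii) reduce the first two terms to exactly $u_t(t_{n,\beta})$, (iii) kills the $u_{tt}$ contribution, and Cauchy--Schwarz on the third-order remainder together with the lower bound $\widehat{k}_n = \tfrac{1+\theta}{2}k_n + \tfrac{1-\theta}{2}k_{n-1} \ge c(\theta,C_L)(k_n+k_{n-1})$ (supplied by the two-sided ratio hypothesis) yields the claimed estimate.

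The main obstacle is verifying identity (iii); it is precisely what forces the specific $\varepsilon_n$- and $\theta$-dependence of the $\beta_\ell^{(n)}$. A clean verification normalizes $t_n=0$, substitutes $k_n = K(1+\varepsilon_n)$ and $k_{n-1} = K(1-\varepsilon_n)$ with $K=(k_n+k_{n-1})/2$, uses $\sum_\ell \alpha_\ell=0$ and $\sum_\ell \alpha_\ell t_{n-1+\ell}=\widehat{k}_n$ to reduce (iii) to $t_{n,\beta} = (\alpha_2 k_n^{2} + \alpha_0 k_{n-1}^{2})/(2\widehat{k}_n)$, and then checks the resulting polynomial identity in $\varepsilon_n$ and $\theta$ against the explicit formula for $t_{n,\beta}=\beta_2^{(n)} k_n - \beta_0^{(n)} k_{n-1}$. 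Once (iii) is in hand, everything else reduces to Taylor expansion and Cauchy--Schwarz.
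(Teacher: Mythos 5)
Your proposal is correct and follows essentially the same route as the paper, whose proof is the one-line instruction to Taylor-expand $u_{n+1}$, $u_n$, $u_{n-1}$ about $t_{n,\beta}$; you simply supply the details the paper leaves implicit, in particular the moment identities $\sum_{\ell}\beta_{\ell}^{(n)}=1$, $\sum_{\ell}\beta_{\ell}^{(n)}t_{n-1+\ell}=t_{n,\beta}$, $\sum_{\ell}\alpha_{\ell}=0$, $\sum_{\ell}\alpha_{\ell}t_{n-1+\ell}=\widehat{k}_{n}$, and the second-moment cancellation $\sum_{\ell}\alpha_{\ell}(t_{n-1+\ell}-t_{n,\beta})^{2}=0$, which I have checked reduces correctly to $t_{n,\beta}=(\alpha_{2}k_{n}^{2}+\alpha_{0}k_{n-1}^{2})/(2\widehat{k}_{n})$ and holds for the stated coefficients. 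Your observations that the $1/k_{n-1}$ factor in the extrapolation error is compensated because the associated Peano-kernel contribution is supported on an interval of length $k_{n-1}$, and that the ratio bound enters only through $\widehat{k}_{n}\geq c(\theta,C_{L})(k_{n}+k_{n-1})$, are both accurate.
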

	\begin{proof}
		Using Taylor theorem and expanding $u_{n+1}$, $u_{n}$ and $u_{n-1}$ at time $t_{n,\beta}$.
	\end{proof}

	\subsection{Stability Analysis}
	\label{sec:Sta-Ana} \ 
	\begin{theorem}
		\label{thm:Semi-DLN-Stab}
		If the body force $f$ in NSE satisfies $f \in L^{2}(0,T;X') \cap \ell^{2,\beta}(0,N;X')$,
		the semi-implicit DLN algorithm for NSE in \eqref{eq:DLN-Semi-Alg} satisfies the following unconditional, long-time energy bounds: for any integer $N>1$,
		\begin{gather}
			\frac{1}{4} (1+\theta) \| u_{N}^{h} \|^{2} + \frac{1}{4}(1-\theta) \| u_{N-1}^{h} \|^{2}
			+\sum_{n=1}^{N-1} \Big\|\sum_{\ell =0}^{2} \gamma_{\ell}^{(n)} u_{n-1+\ell }^{h} \Big\|^{2}
			+\frac{\nu}{2}\sum_{n=1}^{N-1} \widehat{k}_{n} \| \nabla {u_{n,\beta}^{h}} \|^{2}
			\notag
			\\
			\leq \frac{C(\theta)}{\nu} \big( \| |f| \|_{2,-1,\beta}^{2} + \| f \|_{2,-1}^{2} \big)
			+ \frac{1}{4}(1+\theta) \| u_{1}^{h} \|^{2}
			+ \frac{1}{4}(1-\theta) \| u_{0}^{h} \|^{2},
			\label{eq:Semi-DLN-Stab}
		\end{gather}
		where $\{\gamma_{i}^{(n)} \}_{i=0,1,2}$ are defined in \eqref{eq:G-coeff}.
	\end{theorem}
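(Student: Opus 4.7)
The plan is to test the momentum equation in \eqref{eq:DLN-Semi-Alg} with $v^{h}=u_{n,\beta}^{h}\in X^{h}$ and multiply through by $\widehat{k}_{n}$. This converts the discrete time-derivative term into the inner product $\bigl(\sum_{\ell}\alpha_{\ell}u_{n-1+\ell}^{h},\sum_{\ell}\beta_{\ell}^{(n)}u_{n-1+\ell}^{h}\bigr)$, which is precisely the left-hand side of the G-stability identity \eqref{eq:G-stab} with $Y=L^{2}(\Omega)$ and $y_{\ell}=u_{\ell}^{h}$. Consequently, \eqref{eq:G-stab} immediately rewrites this contribution as a telescoping difference of $\|\cdot\|_{G(\theta)}$-norms plus a non-negative square in the $\gamma^{(n)}$-combination; summing from $n=1$ to $N-1$ will produce the first three groups on the left of \eqref{eq:Semi-DLN-Stab}.

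Next I would account for the remaining spatial terms under the same test function. The trilinear contribution $\widehat{k}_{n}\,b(\widetilde{u}_{n}^{h},u_{n,\beta}^{h},u_{n,\beta}^{h})$ vanishes because \eqref{eq:non-linear-b} is skew-symmetric in its last two arguments, independently of the first; this is the key observation that makes the semi-implicit extrapolation harmless at the stability level, since $\widetilde{u}_{n}^{h}$ appears only in the transporting slot. The pressure coupling $-\widehat{k}_{n}(p_{n,\beta}^{h},\nabla\cdot u_{n,\beta}^{h})$ drops out upon choosing $q^{h}=p_{n,\beta}^{h}\in Q^{h}$ in the discrete continuity equation of \eqref{eq:DLN-Semi-Alg}. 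The viscous term leaves the coercive dissipation $\nu\widehat{k}_{n}\|\nabla u_{n,\beta}^{h}\|^{2}$.

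For the right-hand side I would invoke the dual-norm definition \eqref{eq:dual-norm} followed by Young's inequality weighted by $\nu$:
\[
\widehat{k}_{n}(f_{n,\beta},u_{n,\beta}^{h})
\;\leq\;
\frac{\widehat{k}_{n}}{2\nu}\|f_{n,\beta}\|_{-1}^{2}
+\frac{\nu\widehat{k}_{n}}{2}\|\nabla u_{n,\beta}^{h}\|^{2},
\]
and absorb the second term into the viscous contribution, leaving exactly the $\tfrac{\nu}{2}\widehat{k}_{n}\|\nabla u_{n,\beta}^{h}\|^{2}$ appearing in \eqref{eq:Semi-DLN-Stab}. Using $\widehat{k}_{n}\leq C(\theta)(k_{n}+k_{n-1})$ together with the expansion of $f_{n,\beta}$ as a $\beta^{(n)}$-weighted combination of grid values, the forcing contribution $\sum_{n}\widehat{k}_{n}\|f_{n,\beta}\|_{-1}^{2}$ would be controlled by a combination of the discrete norm $\||f|\|_{2,-1,\beta}^{2}$ and the continuous Bochner norm $\|f\|_{2,-1}^{2}$, the latter entering through writing $f$ at grid points as time averages plus Taylor remainders. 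Telescoping the $G(\theta)$-differences upon summation and discarding the non-negative $\gamma^{(n)}$-squares then yields \eqref{eq:Semi-DLN-Stab}.

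None of the steps above is analytically deep; the main point requiring attention is verifying that the semi-implicit linearization does not spoil the energy identity. This follows from the skew-symmetry of $b$, which is insensitive to the choice of the first argument, so the nonlinear term contributes nothing to the energy and no time-step restriction arises at the stability stage — any CFL-type condition is deferred to the convergence analysis in Section \ref{sec:Err-Ana}. The one piece of routine bookkeeping is the splitting of $\sum_{n}\widehat{k}_{n}\|f_{n,\beta}\|_{-1}^{2}$ into the continuous and discrete-time norms of $f$ that appear on the right-hand side of \eqref{eq:Semi-DLN-Stab}.
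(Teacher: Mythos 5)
Your argument is essentially identical to the paper's proof: test with $v^{h}=u_{n,\beta}^{h}$ so that skew-symmetry of $b$ annihilates the linearized convection term (and the discrete divergence-free condition removes the pressure), invoke the $G$-stability identity \eqref{eq:G-stab}, bound the forcing via the dual norm \eqref{eq:dual-norm} and Young's inequality with the $\tfrac{\nu}{2}$ split, decompose $\|f_{n,\beta}\|_{-1}$ into $\|f(t_{n,\beta})\|_{-1}$ plus a consistency remainder exactly as the paper does via Lemma \ref{lemma:DLN-consistency}, and then sum and telescope. One small correction: do not \emph{discard} the non-negative terms $\bigl\|\sum_{\ell}\gamma_{\ell}^{(n)}u_{n-1+\ell}^{h}\bigr\|^{2}$ at the end --- they appear on the left-hand side of \eqref{eq:Semi-DLN-Stab} and must be retained, which your identity already supplies for free.
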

	\begin{proof}
		We set $v^{h}=u_{n,\beta}^{h}$ in \eqref{eq:DLN-Semi-Alg} and use the skew-symmetry property of the operator $b$
		\begin{align*}
			\frac{1}{\widehat{k}_{n}}
			\Big( \sum_{\ell=0}^{2} \alpha_{\ell} u_{n-1+\ell}^{h} , u_{n,\beta}^{h} \Big)
			+ \nu \| \nabla u_{n,\beta}^{h} \|^{2}
			= ( f_{n,\beta}  , u_{n,\beta}^{h} ) .
		\end{align*}
		We apply \eqref{eq:dual-norm} and Young's inequality to $( f_{n,\beta}  , u_{n,\beta}^{h} )$
		\begin{confidential}
			\color{darkblue}
			\begin{align*}
				\big(  f_{n,\beta} ,\nabla u_{n,\beta}^{h} \big)
				\leq
				\| f_{n,\beta} \|_{-1} \| \nabla u_{n,\beta}^{h} \|
				& \leq
				\frac{\nu}{2} \| \nabla u_{n,\beta}^{h} \|^{2} + \frac{1}{2 \nu } 
				\| f_{n,\beta} \|_{-1}^{2}  \\
				& \leq 
				\frac{\nu}{2} \| \nabla u_{n,\beta}^{h} \|^{2} + \frac{1}{\nu } 
				\| f(t_{n,\beta}) \|_{-1}^{2}  
				+ \frac{1}{\nu } \| f_{n,\beta} - f(t_{n,\beta}) \|_{-1}^{2}.
			\end{align*}
			\normalcolor
		\end{confidential}
		\begin{align*}
			\Big(\! \sum_{\ell\!=\!0}^{2}{\alpha_{\ell}}{u_{n\!-\!1\!+\!\ell}^{h}}\ ,u_{n,\beta}^{h} \!\Big)
			\!+\! \frac{\nu}{2} \widehat{k}_n \| \nabla u_{n,\beta}^{h} \|^{2}
			\!\leq\!
			\frac{\widehat{k}_n}{\nu} \| f(t_{n,\beta}) \|_{-1}^{2}
			\!+\! \frac{\widehat{k}_n}{\nu} \| f_{n,\beta} \!-\! f(t_{n,\beta}) \|_{-1}^{2} .
		\end{align*}
		\begin{confidential}
			\color{darkblue}
			\begin{gather*}
				\| f_{n,\beta} - f(t_{n,\beta}) \|_{-1}^{2}
				\leq C(\theta) ( k_{n} + k_{n-1} )^{3} \int_{t_{n-1}}^{t_{n+1}} \| f_{tt} \|_{-1}^{2} dt
			\end{gather*}
			\normalcolor
		\end{confidential}
		Then we use the $G$-stability identity in \eqref{eq:G-stab} and Lemma \ref{lemma:DLN-consistency}
		\begin{gather}
			\label{eq:Stab-eq1}
			\begin{Vmatrix}
				{u_{n+1}^{h}} \\
				{u_{n}^{h}}
			\end{Vmatrix}
			_{G(\theta)}^{2} -
			\begin{Vmatrix}
				{u_{n}^{h}} \\
				{u_{n-1}^{h}}
			\end{Vmatrix}
			_{G(\theta)}^{2} + \Big\| \sum_{\ell=0}^{2} \gamma_{\ell}^{(n)} u_{n-1+\ell}^{h} \Big\|^{2}
			+ \frac{\nu}{2} \widehat{k}_{n} \| \nabla u_{n,\beta}^{h} \|^2 \\
			\leq
			\frac{C(\theta)}{\nu} ( k_{n} + k_{n-1} )
			\| f ( t_{n,\beta}) \|_{-1}^{2} 
			+ \frac{C(\theta)}{\nu} ( k_{n} + k_{n-1} )^{4} \int_{t_{n-1}}^{t_{n+1}} \| f_{tt} \|_{-1}^{2} dt.
			\notag 
		\end{gather}
		By the definition of $G(\theta)$-norm in \eqref{eq:G-norm} and the notations 
		in \eqref{eq:def-norm-dis},
		we sum \eqref{eq:Stab-eq1} over $n$ from $1$ to $N-1$ to yield \eqref{eq:Semi-DLN-Stab}.
	\end{proof}
	\begin{remark}
		We define the numerical dissipation and viscosity dissipation of the semi-implicit DLN algorithm in \eqref{eq:DLN-Semi-Alg} at time $t_{n+1}$
		\begin{align*}
			\text{Numerical dissipation } \mathcal{E}_{n+1}^{\tt ND} &:= \frac{1}{\widehat{k}_{n}} 
			\Big\| \sum_{\ell=0}^{2} \gamma_{\ell}^{(n)} u_{n-1+\ell}^{h} \Big\|^{2}, \\
			\text{Viscosity dissipation } \mathcal{E}_{n+1}^{\tt VD} &:= \nu \| \nabla u_{n,\beta}^{h} \|^{2}.
	    \end{align*}
	\end{remark}

	\subsection{Error Analysis}
	\label{sec:Err-Ana}
	Let $r$ and $s$ be the polynomial degree of $X^{h}$ and $Q^{h}$ respectively and 
	\begin{gather*}
		k_{\rm{max}} = \max_{0 \leq n \leq N} k_{n}.
	\end{gather*}
	Let $u_{n}$ and $p_{n}$ be the exact velocity and pressure of the NSE in \eqref{eq:NSE} at time $t_{n}$.
	We need the following upper and lower bound for the ratio of time steps: there are positive constants $C_{L}$ and $C_{U}$ such that
	\begin{gather}
		\label{eq:ratio-limit}
		0 < C_{L} \leq \frac{k_{n}}{k_{n-1}} \leq C_{U}, \ \ \ \forall \ n.		
	\end{gather}
	\begin{theorem}
		\label{thm:error-velocity-L2}
		Suppose the velocity $u \in X$, the pressure $p \in Q$ and the body force $f(x,t)$ of the NSE  
		in \eqref{eq:NSE} satisfy
		\begin{gather*}
			u \in \ell^{\infty}(0,N;H^r) \cap \ell^{\infty}(0,N;H^1) \cap \ell^{\infty,\beta}(0,N;H^{1})
			\cap \ell^{2,\beta}(0,N;H^{r+1} \cap H^{2}), \\
			u_{t} \in L^{2}(0,T;H^{r+1}),  \ \ 
			u_{tt} \in L^{2}(0,T;H^{r+1} \cap H^{1}), \ \ 
			u_{ttt} \in L^{2}(0,T;X'), \\
			p \in \ell^{2,\beta}(0,N;H^{s+1}), \
			f \in L^{2}(0,T;X').
		\end{gather*}
		Under the time step bounds in \eqref{eq:ratio-limit}, the numerical solutions of the semi-implicit DLN scheme in \eqref{eq:DLN-Semi-Alg} satisfy 
		\begin{align}
			\label{eq:error-L2-conclusion}
			&\max_{0 \leq n \leq N} \| u_{n} - u_{n}^{h} \| 
			+ \Big( \nu \sum_{n=1}^{N-1} \widehat{k}_{n} 
			\| \nabla \big( u_{n} - u_{n}^{h} \big) \|^{2} \Big)^{1/2} 
			\leq \mathcal{O} \big( h^{r},h^{s+1}, k_{\rm{max}}^{2} \big).
		\end{align}
	\end{theorem}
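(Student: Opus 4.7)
The plan is to decompose the velocity error through the Stokes projection and derive an equation for the projected error that is amenable to the DLN G-stability identity. I would write $u_n - u_n^h = \eta_n + \phi_n^h$, where $\eta_n := u_n - P_S^{(u)} u_n$ is controlled directly by \eqref{eq:Stoke-Approx} and \eqref{eq:approx-thm}, and $\phi_n^h := P_S^{(u)} u_n - u_n^h \in V^h$ is the discrete error to be estimated. The pressure drops out once I restrict to the discretely divergence-free test space $V^h$, because the Stokes projection already absorbs the $(p - q^h, \nabla\cdot v^h)$ contribution up to a residual of size $\mathcal{O}(h^{s+1}/\nu)$.

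First, I would plug the exact solution into the weak form at time $t_{n,\beta}$ and subtract the semi-implicit DLN scheme \eqref{eq:DLN-Semi-Alg}. Using \Cref{lemma:DLN-consistency} and the skew-symmetry of $b$, this produces an error equation of the form
\begin{equation*}
\Big(\frac{1}{\widehat{k}_n}\sum_{\ell=0}^{2}\alpha_\ell \phi_{n-1+\ell}^h, v^h\Big) + \nu(\nabla \phi_{n,\beta}^h, \nabla v^h) = (\tau_n, v^h) + \mathcal{N}_n(v^h),
\end{equation*}
where $\tau_n$ collects the DLN consistency residuals (bounded via \eqref{eq:DLN-Consistency1}--\eqref{eq:DLN-Consistency2}) together with the Stokes-projection pieces for the time-difference and viscous terms, and $\mathcal{N}_n$ encodes the nonlinear discrepancy between $b(\tilde u_n, u_{n,\beta}, v^h)$ and $b(\tilde u_n^h, u_{n,\beta}^h, v^h)$. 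Setting $v^h = \phi_{n,\beta}^h$ and applying the G-stability identity \eqref{eq:G-stab} yields a telescoping structure together with the viscous dissipation $\nu\widehat{k}_n \|\nabla \phi_{n,\beta}^h\|^2$ and the numerical dissipation $\|\sum_{\ell=0}^{2} \gamma_\ell^{(n)} \phi_{n-1+\ell}^h\|^2$, exactly mirroring the stability argument of \Cref{thm:Semi-DLN-Stab}.

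The main obstacle is the nonlinear difference $\mathcal{N}_n(\phi_{n,\beta}^h)$. I would add and subtract intermediate terms to split it into a piece with $\tilde u_n - \tilde u_n^h$ in the first slot paired with $u_{n,\beta}$ in the second, and a piece with $\tilde u_n^h$ in the first slot paired with $u_{n,\beta} - u_{n,\beta}^h$ in the second; each is further decomposed into a Stokes-projection contribution (involving $\eta$) and a discrete-error contribution (involving $\phi^h$). The crucial observation is that the extrapolation $\tilde u_n^h$ is a linear combination of quantities already computed, so applying \eqref{eq:b-bound3} with $\|\tilde u_n^h\|_1$ in the first slot lets me control it by the a priori bound furnished by \Cref{thm:Semi-DLN-Stab}, instead of needing an $L^\infty$ bound on the discrete gradient. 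This is precisely what relaxes the fully-implicit restriction $\Delta t \leq \mathcal{O}(\nu^3)$: each dangerous cross term is absorbed into $\tfrac{\nu}{4}\widehat{k}_n \|\nabla \phi_{n,\beta}^h\|^2$ by Young's inequality at the cost of factors of $\nu^{-1}$ (with at most one inverse estimate \eqref{eq:inverse-estimator} used on the $\eta$-pieces), rather than $\nu^{-3}$, which leaves only the moderate time-step restriction needed for the discrete Gronwall step.

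Summing the per-step bounds over $n$ from $1$ to $N-1$, the DLN consistency terms contribute $\mathcal{O}(k_{\max}^4)$ in the squared sense, i.e.\ $\mathcal{O}(k_{\max}^2)$ in the final estimate; the Stokes-projection contributions yield $\mathcal{O}(h^r)$ in velocity and $\mathcal{O}(h^{s+1}/\nu)$ from pressure through \eqref{eq:Stoke-Approx} and \eqref{eq:approx-thm}; and the smoothness hypotheses on $u_t, u_{tt}, u_{ttt}$ guarantee that all time integrals of the residuals are finite. The discrete Gronwall inequality, applicable because under the ratio assumption \eqref{eq:ratio-limit} the coefficient of $\|\phi_{n,\beta}^h\|^2$ remains bounded independently of $N$, produces the desired bound on $\max_n \|\phi_n^h\|$ together with the viscous $\ell^2$-in-time seminorm. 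A final triangle inequality with $\eta_n$ delivers \eqref{eq:error-L2-conclusion}.
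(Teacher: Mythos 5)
Your overall skeleton (Stokes-projection splitting of the error into $\eta_n$ and $\phi_n^h\in V^h$, error equation tested with $v^h=\phi_{n,\beta}^h$, the $G$-stability identity \eqref{eq:G-stab}, Young's inequality, discrete Gronwall, triangle inequality) coincides with the paper's. The gap is in your treatment of the nonlinear term, which is the crux of the proof. You propose to keep $\widetilde u_n^h$ in the first slot of $b(\cdot,\cdot,\cdot)$ and control $\|\widetilde u_n^h\|_1$ ``by the a priori bound furnished by Theorem \ref{thm:Semi-DLN-Stab}.'' That bound does not exist: the stability estimate \eqref{eq:Semi-DLN-Stab} controls $\|u_n^h\|$ uniformly and $\sum_n\widehat k_n\|\nabla u_{n,\beta}^h\|^2$, i.e.\ only the gradient of the $\beta$-average of three consecutive time levels, and only in an $\ell^2$-in-time sense. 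It gives no pointwise-in-time (nor even summable) control of $\|\nabla u_n^h\|$ at individual time levels, hence none of $\|\nabla\widetilde u_n^h\|$, which is built from $u_n^h$ and $u_{n-1}^h$ separately. The paper's proof never needs such a bound: after skew-symmetry kills $b(\widetilde u_n^h,\phi_{n,\beta}^h,\phi_{n,\beta}^h)$, it rewrites $\widetilde u_n^h=\widetilde u_n+\widetilde\phi_n^h-\widetilde\eta_n$ so that the only discrete quantity remaining in a first slot is $\widetilde\phi_n^h$; the dangerous term $b(\widetilde\phi_n^h,u_{n,\beta},\phi_{n,\beta}^h)$ is then bounded by $C(\theta)\|u_{n,\beta}\|_2\big(\|\phi_n^h\|+\|\phi_{n-1}^h\|\big)\|\nabla\phi_{n,\beta}^h\|$ using the $H^2$-regularity of the \emph{exact} solution in the middle slot (the bound $|(u\cdot\nabla v,w)|\le C\|u\|\,\|v\|_2\,\|w\|_1$ from \eqref{eq:b-bound1}), while $b(\widetilde\phi_n^h,\eta_{n,\beta},\phi_{n,\beta}^h)$ gains a factor $h^{1/2}$ via \eqref{eq:b-bound2} and the inverse estimate \eqref{eq:inverse-estimator}. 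This is what produces a Gronwall coefficient $\nu^{-1}\|u_{n,\beta}\|_2^2$ depending only on the exact solution.

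A second, related imprecision: you justify the Gronwall step by saying the coefficient of $\|\phi_{n,\beta}^h\|^2$ remains bounded independently of $N$. That is not the relevant condition. Since $\phi_{n,\beta}^h$ contains $\phi_{n+1}^h$, a Gronwall coefficient multiplying $\|\phi_{n,\beta}^h\|^2$ would force the \emph{restricted} discrete Gronwall inequality and hence a time-step restriction of the form $\widehat k_n\,C(\theta)\nu^{-1}\|u_{n,\beta}\|_2^2<1$ --- precisely what the semi-implicit scheme is designed to avoid. The point of extrapolating the transported field from time levels $n$ and $n-1$ is that the final Gronwall coefficient multiplies only $\|\phi_n^h\|^2+\|\phi_{n-1}^h\|^2$, so the unrestricted version of the discrete Gronwall inequality applies and \eqref{eq:error-L2-conclusion} holds with no condition on $k_{\rm max}$ beyond \eqref{eq:ratio-limit}. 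As written, your argument either fails (where the stability bound is invoked for $\|\widetilde u_n^h\|_1$) or reintroduces a time-step restriction; you need the paper's rearrangement of the nonlinearity to close the estimate.
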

	\begin{proof}
	The exact solutions of NSE at time $t_{n,\beta}$ satisfy: 
	\begin{confidential}
		\color{darkblue}
		\begin{gather*}
			\big( u_{t}(t_{n,\beta}), v^{h} \big) + b \big( u(t_{n,\beta}), u(t_{n,\beta}), v^{h} \big)
			+ \nu(\nabla u(t_{n,\beta}),v^{h}) - (p(t_{n,\beta}),\nabla \cdot v^{h}) 
			= (f(t_{n,\beta}),v^{h}).
		\end{gather*}
		\normalcolor
	\end{confidential}
	\begin{gather}
		\label{eq:NSE-exact}
		\frac{1}{\widehat{k}_{n}} \Big( \sum_{\ell=0}^{2}{\alpha_{\ell}} u_{n-1+\ell}, v^{h} \Big)
		+ b \big( \widetilde{u}_{n}, u_{n,\beta}, v^{h} \big) + \nu \big( \nabla u_{n,\beta}, v^{h} \big)
		- \big( p(t_{n,\beta}), \nabla \cdot v^{h} \big)  \\
		= \big( f_{n,\beta}, v^{h} \big) + \tau_{n}(v^{h}),  
		\qquad \qquad \qquad \qquad \forall  v^{h} \in V^{h}   \notag
	\end{gather}
	where $\widetilde{u}_{n}$ is second-order, linear extrapolation of $u_{n,\beta}$ in \eqref{eq:2nd-approx}
	and the truncation error $\tau_{n}$ is
	\begin{align*}
		\tau_{n}(v^{h})
		=& \Big( \frac{1}{\widehat{k}_{n}} \big( \sum_{\ell=0}^{2}{\alpha_{\ell}} u_{n-1+\ell} \big) 
		- u(t_{n,\beta}), v^{h} \Big) + \nu \big( \nabla (u_{n,\beta} - u(t_{n,\beta})), v^{h} \big) \\
		&+ b \big( \widetilde{u}_{n}, u_{n,\beta}, v^{h} \big) 
		 - b ( u(t_{n,\beta}), u(t_{n,\beta}), v^{h})
		 + (f(t_{n,\beta}) - f_{n,\beta},v^{h}).
	\end{align*}
	Let $P_{S}^{(u)}u_{n}$ be velocity component of the Stokes projection of $(u_{n},0)$ onto $V^{h} \times Q^{h}$. We set
	\begin{gather*}
		\phi_{n}^{h} = u_{n}^{h} - P_{S}^{(u)}u_{n}, \ \ \ \eta_{n} = u_{n} - P_{S}^{(u)}u_{n},
	\end{gather*}
	and decompose the error of velocity $e_{n}^{u}$ to be  
	\begin{gather}
    	e_{n}^{u} = u_{n}^{h} - u_{n} = \phi_{n}^{h} - \eta_{n}.
    	\label{eq:velocity-decompose}
    \end{gather} 
	We restrict $v^{h} \in V^{h}$ in \eqref{eq:DLN-Semi-Alg} and subtract \eqref{eq:NSE-exact} 
	from the first equation of \eqref{eq:DLN-Semi-Alg}
	\begin{confidential}
		\color{darkblue}
		\begin{gather*}
			\frac{1}{\widehat{k}_{n}} \Big( \sum_{\ell=0}^{2}{\alpha_{\ell}} u_{n-1+\ell}^{h}, v^{h} \Big)
			- \frac{1}{\widehat{k}_{n}} \Big( \sum_{\ell=0}^{2}{\alpha_{\ell}} u_{n-1+\ell}, v^{h} \Big)
			+ b \big( \widetilde{u}_{n}^{h}, u_{n,\beta}^{h}, v^{h} \big)
			- b \big( \widetilde{u}_{n}, u_{n,\beta}, v^{h} \big) \\
			+ \nu \big( \nabla u_{n,\beta}^{h}, \nabla v^{h} \big) 
			- \nu \big( \nabla u_{n,\beta}, \nabla v^{h} \big) 
			= - (p(t_{n,\beta}) - q^{h}, \nabla \cdot v^{h}) - \tau_{n}(v^{h})
		\end{gather*}
		\normalcolor
	\end{confidential}
	\begin{align}
		\label{eq:Diff-exact-DLN}
		&\frac{1}{\widehat{k}_{n}} \big( \sum_{\ell=0}^{2}{\alpha_{\ell}} \phi_{n-1+\ell}^{h}, v^{h} \big) 
		+ \nu \big( \nabla \phi_{n,\beta}^{h}, \nabla v^{h} \big)
		\\
		=& \frac{1}{\widehat{k}_{n}} \big( \sum_{\ell=0}^{2}{\alpha_{\ell}} \eta_{n-1+\ell}, v^{h} \big)
		+ \nu \big( \nabla \eta_{n,\beta}, \nabla v^{h} \big) 
		+ b \big( \widetilde{u}_{n}, u_{n,\beta}, v^{h} \big) 
		- b \big( \widetilde{u}_{n}^{h}, u_{n,\beta}^{h}, v^{h} \big) \notag \\
		&- (p(t_{n,\beta}) - q^{h}, \nabla \cdot v^{h}) - \tau_{n}(v^{h}),     
		\qquad \qquad \qquad \qquad \qquad \qquad \qquad \forall q^{h} \in Q^{h}
		\notag 
	\end{align}
	We set $v^{h} = \phi_{n,\beta}^{h}$ in \eqref{eq:Diff-exact-DLN} and use the $G$-stability identity in \eqref{eq:G-stab},
	\begin{gather}
		\frac{1}{\widehat{k}_{n}} \Big(\begin{Vmatrix}
			{\phi_{n+1}^{h}} \\
			{\phi_{n}^{h}}
		\end{Vmatrix}
		_{G(\theta)}^{2} -
		\begin{Vmatrix}
			{\phi_{n}^{h}} \\
			{\phi_{n-1}^{h}}
		\end{Vmatrix}%
		_{G(\theta)}^{2} 
		+ \Big\| \sum_{\ell=0}^{2} \gamma_{\ell}^{(n)} \phi_{n-1+\ell}^{h} \Big\| ^{2} \Big)
		+ \nu \| \nabla \phi_{n,\beta}^{h} \|^2  
		\notag  \\
		= 
		\frac{1}{\widehat{k}_{n}} \big( \sum_{\ell=0}^{2}{\alpha_{\ell}} \eta_{n-1+\ell}, 
		\phi_{n,\beta}^{h} \big)
		+ \nu \big( \nabla \eta_{n,\beta}, \nabla \phi_{n,\beta}^{h} \big)
		+ b \big( \widetilde{u}_{n}, u_{n,\beta}, \phi_{n,\beta}^{h} \big) 
		- b \big( \widetilde{u}_{n}^{h}, u_{n,\beta}^{h}, \phi_{n,\beta}^{h} \big)  \notag \\
		- (p(t_{n,\beta}) - q^{h}, \nabla \cdot \phi_{n,\beta}^{h}) - \tau_{n}(\phi_{n,\beta}^{h}).
		\label{eq:error-1-DLN}
	\end{gather}
	By Cauchy-Schwarz inequality, Poincar$\acute{\rm{e}}$ inequality, Young's equality, \eqref{eq:approx-thm} and \eqref{eq:Stoke-Approx} ($\inf_{q^{h}\in Q^{h}} \| p - q^{h} \|$ vanishes since $p = 0$ in the Stokes projection)
 	\begin{confidential}
		\color{darkblue}
		\begin{align*}
			\frac{1}{\widehat{k}_{n}}
			\big( \sum_{\ell=0}^{2}{\alpha_{\ell}} \eta_{n-1+\ell}, \phi_{n,\beta}^{h} \big)
			\leq \frac{1}{\widehat{k}_{n}}
			\big\| \sum_{\ell=0}^{2}{\alpha_{\ell}} \eta_{n-1+\ell} \big\| \| \phi_{n,\beta}^{h} \|
			\leq \frac{C}{\widehat{k}_{n}}
			\big\| \sum_{\ell=0}^{2}{\alpha_{\ell}} \eta_{n-1+\ell} \big\| 
			\| \nabla \phi_{n,\beta}^{h} \|
		\end{align*}
		\normalcolor
	\end{confidential}
	\begin{align}
		\label{eq:error-diff-eta-phi-1}
		\frac{1}{\widehat{k}_{n}}
		\big(\! \sum_{\ell=0}^{2}{\alpha_{\ell}} \eta_{n\!-\!1\!+\!\ell}, \phi_{n,\beta}^{h} \!\big)
		\leq& \frac{C}{\nu \widehat{k}^{2}_{n}} 
		\big\| \sum_{\ell=0}^{2}{\alpha_{\ell}} \eta_{n-1+\ell} \big\|^{2}
		+ \frac{\nu}{16} \| \nabla \phi_{n,\beta}^{h} \|^{2} \\
		\leq& \frac{C h^{2r+2}}{\nu \widehat{k}^{2}_{n}} \big\| \sum_{\ell=0}^{2}{\alpha_{\ell}} u_{n-1+\ell} \big\|_{r+1}^{2} + \frac{\nu}{16} \| \nabla \phi_{n,\beta}^{h} \|^{2} \notag \\
		\leq& \frac{C(\!\theta\!) h^{2r\!+\!2}}{\nu \widehat{k}^{2}_{n}} \big( \|u_{n\!+\!1} \!-\! u_{n}\|_{r\!+\!1}^{2} \!+\! \|u_{n\!+\!1} \!-\! u_{n\!-\!1}\|_{r\!+\!1}^{2} \big)
		\!+\! \frac{\nu}{16} \| \!\nabla \phi_{n,\beta}^{h} \!\|^{2}. \notag 
	\end{align}
	By Holder's inequality
	\begin{confidential}
		\color{darkblue}
		\begin{align*}
			\big\| \sum_{\ell=0}^{2}{\alpha_{\ell}} u_{n-1+\ell} \big\|_{r+1}^{2}
			=& \big\| -\alpha_{1} (u_{n+1} - u_{n}) - \alpha_{0}(u_{n+1} - u_{n-1}) \big\|_{r+1}^{2} \\
			\leq& C(\theta) \big( \|u_{n+1} - u_{n}\|_{r+1}^{2} + \|u_{n+1} - u_{n-1}\|_{r+1}^{2} \big)
		\end{align*}
		\begin{align*}
			\|u_{n+1} - u_{n-1}\|_{k+1}^{2} 
			=& \Big\| \int_{t_{n-1}}^{t+1} \partial_{t} u(\cdot,t) dt \Big\|_{r+1}^{2} 
			= \int_{\Omega} \sum_{\ell \leq r+1} 
			\Big| D^{\ell} \Big( \int_{t_{n-1}}^{t+1} \partial_{t} u(x,t) dt \Big) \Big|^{2} dx  \\
			=& \int_{\Omega} \sum_{\ell \leq r+1}
			\Big| \int_{t_{n-1}}^{t+1} D^{\ell} \partial_{t} u(x,t) dt \Big|^{2} dx
			\leq \int_{\Omega} \sum_{\ell \leq r+1}
			\Big( \int_{t_{n-1}}^{t+1} \big| D^{\ell} \partial_{t} u(x,t) \big| dt \Big)^{2} dx \\
			\leq& \int_{\Omega} \sum_{\ell \leq r+1} 
			\Big[ \big( \int_{t_{n-1}}^{t+1} 1^2 dt \big) 
			\Big( \int_{t_{n-1}}^{t+1} \big| D^{\ell} \partial_{t} u(x,t) \big|^2 dt \Big) \Big] dx \\
			=& (k_{n}+k_{n-1}) \int_{\Omega} \int_{t_{n-1}}^{t+1} 
			\sum_{\ell \leq r+1} \big| D^{\ell} \partial_{t} u(x,t) \big|^2 dt dx \\
			=& (k_{n}+k_{n-1}) \int_{t_{n-1}}^{t+1} \int_{\Omega}
			\sum_{\ell \leq r+1} \big| D^{\ell} \partial_{t} u(x,t) \big|^2 dx dt 
			= (k_{n}+k_{n-1}) \int_{t_{n-1}}^{t_{n+1}} \| u_{t} \|_{r+1}^{2} dt.
		\end{align*}
		Similarly 
		\begin{gather*}
			\|u_{n+1} - u_{n}\|_{r+1}^{2}
			\leq k_{n} \int_{t_{n}}^{t_{n+1}} \| u_{t}\|_{r+1}^{2} dt
		\end{gather*}
		\begin{align*}
			\big\| \sum_{\ell=0}^{2}{\alpha_{\ell}} u_{n-1+\ell} \big\|_{r+1}^{2}
			\leq & C(\theta) \big( \|u_{n+1} - u_{n}\|_{r+1}^{2} + \|u_{n+1} - u_{n-1}\|_{r+1}^{2} \big) \\
			\leq & C(\theta) (k_{n}+k_{n-1}) \int_{t_{n-1}}^{t_{n+1}} \| u_{t} \|_{r+1}^{2} dt.
		\end{align*}
		\normalcolor
	\end{confidential}
	\begin{align}
		\label{eq:error-diff-eta-phi-2}
		\|u_{n+1} - u_{n}\|_{r+1}^{2} 
		&= \Big\| \int_{t_{n}}^{t_{n+1}} u_{t}(\cdot,t) dt \Big\|_{r+1}^{2}
		\leq k_{n} \int_{t_{n}}^{t_{n+1}} \| u_{t} \|_{r+1}^{2} dt, \\
		\|u_{n+1} - u_{n-1}\|_{r+1}^{2}
		&= \Big\| \int_{t_{n-1}}^{t_{n+1}} u_{t}(\cdot,t) dt \Big\|_{r+1}^{2}
		\leq (k_{n}+k_{n-1}) \int_{t_{n-1}}^{t_{n+1}} \| u_{t} \|_{r+1}^{2} dt. \notag 
	\end{align}
	We combine \eqref{eq:error-diff-eta-phi-1} and \eqref{eq:error-diff-eta-phi-2}, 
	\begin{align}
		\label{eq:error-diff-eta-phi}
		\frac{1}{\widehat{k}_{n}}
		\big( \sum_{\ell=0}^{2}{\alpha_{\ell}} \eta_{n-1+\ell}, \phi_{n,\beta}^{h} \big) 
		\leq \frac{C(\theta) h^{2r+2}}{\nu \widehat{k}_{n}} 
		\int_{t_{n-1}}^{t_{n+1}} \| u_{t} \|_{r+1}^{2} dt + \frac{\nu}{16} \| \nabla \phi_{n,\beta}^{h} \|^{2}. 
	\end{align}
	By the definition of the Stokes projection, 
	$\nu \big( \nabla \eta_{n,\beta}, \nabla \phi_{n,\beta}^{h} \big) = 0$.
	We set
	\begin{align*}
		\widetilde{\eta}_{n} 
		&= \beta _{2}^{(n)} \Big[(1 + \frac{k_{n}}{k_{n-1}}) \eta_{n} - \frac{k_{n}}{k_{n-1}} \eta_{n-1} \Big]
		+ \beta _{1}^{(n)} \eta_{n} + \beta _{0}^{(n)} \eta_{n-1}, \\
		\widetilde{\phi}_{n}^{h} 
		&= \beta _{2}^{(n)} \Big[(1 + \frac{k_{n}}{k_{n-1}}) \phi_{n}^{h} - \frac{k_{n}}{k_{n-1}} \phi_{n-1}^{h} \Big]
		+ \beta _{1}^{(n)} \phi_{n}^{h} + \beta _{0}^{(n)} \phi_{n-1}^{h}.
	\end{align*}
	The non-linear terms in \eqref{eq:error-1-DLN} become
	\begin{confidential}
		\color{darkblue}
		\begin{align*}
			&b \big( \widetilde{u}_{n}, u_{n,\beta}, \phi_{n,\beta}^{h} \big) 
		- b \big( \widetilde{u}_{n}^{h}, u_{n,\beta}^{h}, \phi_{n,\beta}^{h} \big) \\
		=& b \big( \widetilde{u}_{n}, u_{n,\beta}, \phi_{n,\beta}^{h} \big) 
		- b \big( \widetilde{u}_{n}^{h}, u_{n,\beta}, \phi_{n,\beta}^{h} \big) 
		+ b \big( \widetilde{u}_{n}^{h}, u_{n,\beta}, \phi_{n,\beta}^{h} \big)
		- b \big( \widetilde{u}_{n}^{h}, u_{n,\beta}^{h}, \phi_{n,\beta}^{h} \big) \\
		=& b \big( \widetilde{u}_{n} - \widetilde{u}_{n}^{h}, u_{n,\beta}, \phi_{n,\beta}^{h} \big)
		+ b \big( \widetilde{u}_{n}^{h}, u_{n,\beta} - u_{n,\beta}^{h}, \phi_{n,\beta}^{h} \big) \\
		=& b \big( \widetilde{\eta}_{n} - \widetilde{\phi}_{n}^{h}, u_{n,\beta}, \phi_{n,\beta}^{h} \big)
		+ b \big( \widetilde{u}_{n}^{h}, \eta_{n,\beta} - \phi_{n,\beta}^{h}, \phi_{n,\beta}^{h} \big) \\
		=& b \big( \widetilde{\eta}_{n}, u_{n,\beta}, \phi_{n,\beta}^{h} \big)
		- b \big( \widetilde{\phi}_{n}^{h}, u_{n,\beta}, \phi_{n,\beta}^{h} \big)
		+ b \big( \widetilde{u}_{n}^{h}, \eta_{n,\beta}, \phi_{n,\beta}^{h} \big)
		- b \big( \widetilde{u}_{n}^{h}, \phi_{n,\beta}^{h}, \phi_{n,\beta}^{h} \big) \\
		=& b \big( \widetilde{\eta}_{n}, u_{n,\beta}, \phi_{n,\beta}^{h} \big)
		- b \big( \widetilde{\phi}_{n}^{h}, u_{n,\beta}, \phi_{n,\beta}^{h} \big)
		+ b \big( \widetilde{u}_{n}^{h} - \widetilde{u}_{n}, \eta_{n,\beta}, \phi_{n,\beta}^{h} \big)
		+ b \big( \widetilde{u}_{n}, \eta_{n,\beta}, \phi_{n,\beta}^{h} \big) \\
		=& b \big( \widetilde{\eta}_{n}, u_{n,\beta}, \phi_{n,\beta}^{h} \big)
		- b \big( \widetilde{\phi}_{n}^{h}, u_{n,\beta}, \phi_{n,\beta}^{h} \big)
		+ b \big( \widetilde{\phi}_{n}^{h} - \widetilde{\eta}_{n}, \eta_{n,\beta}, \phi_{n,\beta}^{h} \big)
		+ b \big( \widetilde{u}_{n}, \eta_{n,\beta}, \phi_{n,\beta}^{h} \big) 
		\end{align*}
		\normalcolor
	\end{confidential}
	\begin{align*}
		&b \big( \widetilde{u}_{n}, u_{n,\beta}, \phi_{n,\beta}^{h} \big) 
		\!-\! b \big( \widetilde{u}_{n}^{h}, u_{n,\beta}^{h}, \phi_{n,\beta}^{h} \big) \\
		=&\! b \big( \widetilde{u}_{n}, u_{n,\beta}, \phi_{n,\beta}^{h} \big) 
		\!-\! b \big( \widetilde{u}_{n}^{h}, u_{n,\beta}, \phi_{n,\beta}^{h} \big) 
		\!+\! b \big( \widetilde{u}_{n}^{h}, u_{n,\beta}, \phi_{n,\beta}^{h} \big)
		\!-\! b \big( \widetilde{u}_{n}^{h}, u_{n,\beta}^{h}, \phi_{n,\beta}^{h} \big)  \\
		=&\! b \big(\! \widetilde{\eta}_{n}, u_{n,\beta}, \phi_{n,\beta}^{h} \!\big)
		\!-\! b \big(\! \widetilde{\phi}_{n}^{h}, u_{n,\beta}, \phi_{n,\beta}^{h} \!\big)
		\!+\! b \big(\! \widetilde{u}_{n}^{h}, \eta_{n,\beta}, \phi_{n,\beta}^{h} \!\big)   \\
		=& \! b \big(\! \widetilde{\eta}_{n}, u_{n\!,\!\beta}, \phi_{n\!,\!\beta}^{h} \!\big)
		\!-\! b \big(\! \widetilde{\phi}_{n}^{h}, u_{n\!,\!\beta}, \phi_{n\!,\!\beta}^{h} \!\big)
		\!+\! b \big(\! \widetilde{\phi}_{n}^{h}, \eta_{n\!,\!\beta}, \phi_{n\!,\!\beta}^{h} \!\big)
		\!-\! b \big(\! \widetilde{\eta}_{n}, \eta_{n\!,\!\beta}, \phi_{n\!,\!\beta}^{h} \!\big)    
		\!+\! b \big(\! \widetilde{u}_{n}, \eta_{n\!,\!\beta}, \phi_{n\!,\!\beta}^{h} \!\big).
	\end{align*}
	By \eqref{eq:b-bound1}, \eqref{eq:b-bound2}, \eqref{eq:inverse-estimator} and \eqref{eq:Stoke-Approx},
	Poincar$\acute{\rm{e}}$ inequality and step requirement in \eqref{eq:ratio-limit}
	\begin{align}
		\label{eq:non-linear-bound1}
		b \big( \widetilde{\eta}_{n}, u_{n,\beta}, \phi_{n,\beta}^{h} \big) 
		\leq& C \| \nabla \widetilde{\eta}_{n} \| \| \nabla u_{n,\beta} \| \| \nabla \phi_{n,\beta}^{h} \|,    \\
		b \big( \widetilde{\phi}_{n}^{h}, u_{n,\beta}, \phi_{n,\beta}^{h} \big) 
		\leq& C(\theta) \| u_{n,\beta} \|_{2} \big( \| \phi_{n}^{h} \| + \| \phi_{n-1}^{h} \| \big)
		\| \nabla \phi_{n,\beta}^{h} \|,  \notag \\
		b \big( \widetilde{u}_{n}, \eta_{n,\beta}, \phi_{n,\beta}^{h} \big) 
		\leq& C(\theta) \big( \| \nabla u_{n} \| + \| \nabla u_{n-1} \| \big)
		\| \nabla \eta_{n,\beta} \| \| \nabla \phi_{n,\beta}^{h} \|,  \notag \\
		b \big( \widetilde{\phi}_{n}^{h}, \eta_{n,\beta}, \phi_{n,\beta}^{h} \big) 
		\leq& C \| \widetilde{\phi}_{n}^{h} \|^{1/2} \| \nabla \widetilde{\phi}_{n}^{h} \|^{1/2} 
		\| \nabla \eta_{n,\beta} \| \| \nabla \phi_{n,\beta}^{h} \|                    \notag \\
		\leq& C h \| \widetilde{\phi}_{n}^{h} \|^{1/2} \| \nabla \widetilde{\phi}_{n}^{h} \|^{1/2} 
		\| u_{n,\beta} \|_{2} \| \nabla \phi_{n,\beta}^{h} \|        \notag \\
		\leq& C(\theta) h^{1/2} \big( \| \phi_{n}^{h} \| + \| \phi_{n-1}^{h} \| \big) \| u_{n,\beta} \|_{2} 
		\| \nabla \phi_{n,\beta}^{h} \|                         \notag \\
		b \big( \widetilde{\eta}_{n}, \eta_{n,\beta}, \phi_{n,\beta}^{h} \big) 
		\leq& C(\theta) \big( \| \nabla \eta_{n} \| + \| \nabla \eta_{n-1} \| \big)
		\| \nabla \eta_{n,\beta} \| \| \nabla \phi_{n,\beta}^{h} \|                   \notag \\ 
		\leq& C(\theta) \big( \| \nabla u_{n} \| + \| \nabla u_{n-1} \| \big) 
		\| \nabla \eta_{n,\beta} \| \| \nabla \phi_{n,\beta}^{h} \|. \notag 
	\end{align}
	We apply Young's inequality to all non-linear terms in \eqref{eq:non-linear-bound1} 
	\begin{confidential}
		\color{darkblue}
		\begin{align*}
			&b \big( \widetilde{u}_{n}, u_{n,\beta}, \phi_{n,\beta}^{h} \big) 
		- b \big( \widetilde{u}_{n}^{h}, u_{n,\beta}^{h}, \phi_{n,\beta}^{h} \big) \\
		\leq& C(\theta) \Big( \| u_{n,\beta} \|_{2} \big( \| \phi_{n}^{h} \| + \| \phi_{n-1}^{h} \| \big) 
		+ \| \nabla \widetilde{\eta}_{n} \| \| \nabla u_{n,\beta} \|
		+ \big( \| \nabla u_{n} \| + \| \nabla u_{n-1} \| \big) \| \nabla \eta_{n,\beta} \| \Big) 
		\| \nabla \phi_{n,\beta}^{h} \| \\
		\leq& \frac{C(\theta)}{\nu} \Big( \| u_{n,\beta} \|_{2}^{2} 
		\big( \| \phi_{n}^{h} \|^{2} + \| \phi_{n-1}^{h} \|^{2} \big) 
		+ \| \nabla \widetilde{\eta}_{n} \|^{2} \| \nabla u_{n,\beta} \|^{2}
		+ \big( \| \nabla u_{n} \|^{2} + \| \nabla u_{n-1} \|^{2} \big) \| \nabla \eta_{n,\beta} \|^{2} \Big)
		+ \frac{\nu}{16} \| \nabla \phi_{n,\beta}^{h} \|^{2} \\
		\leq& \frac{C(\theta)}{\nu} \Big( \| u_{n,\beta} \|_{2}^{2} 
		\big( \| \phi_{n}^{h} \|^{2} + \| \phi_{n-1}^{h} \|^{2} \big) 
		+ \| | \nabla u | \|_{\infty,0}^{2} \| \nabla \widetilde{\eta}_{n} \|^{2} 
		+ \| |\nabla u| \|_{\infty,0}^{2}  \| \nabla \eta_{n,\beta} \|^{2} \Big)
		+ \frac{\nu}{16} \| \nabla \phi_{n,\beta}^{h} \|^{2} \\
		\end{align*}
		\normalcolor
	\end{confidential}
	\begin{align}
		\label{eq:non-linear-bound2}
		&b \big( \widetilde{u}_{n}, u_{n,\beta}, \phi_{n,\beta}^{h} \big) 
		\!-\! b \big( \widetilde{u}_{n}^{h}, u_{n,\beta}^{h}, \phi_{n,\beta}^{h} \big)  \\
		\leq& \!\frac{C(\!\theta\!)}{\nu} \!\Big[\! \| u_{n\!,\!\beta} \|_{2}^{2} \!
		\big(\! \| \phi_{n}^{h} \|^{2} \!+\! \| \phi_{n\!-\!1}^{h} \|^{2} \!\big) 
		\!+\! \| | \!\nabla u | \|_{\infty\!,\!0}^{2} \| \!\nabla \widetilde{\eta}_{n} \|^{2} 
		\!+\! \| | \! \nabla u| \|_{\infty\!,\!0}^{2} \| \!\nabla \eta_{n\!,\!\beta} \|^{2} \!\Big]
		\!+\! \frac{\nu}{16}\! \| \!\nabla \phi_{n\!,\!\beta}^{h} \|^{2}.  \notag 
	\end{align}
	By \eqref{eq:approx-thm}, triangle inequality and \eqref{eq:DLN-Consistency1} 
	in Lemma \ref{lemma:DLN-consistency}
	\begin{confidential}
		\color{darkblue}
		\begin{align*}
			\| \nabla \widetilde{\eta}_{n} \|^{2}
			\leq& C h^{2r} \| \widetilde{u}_{n} \|_{r+1}^{2} 
			\leq C h^{2r} \big( \| \widetilde{u}_{n} - u(t_{n,\beta}) \|_{r+1}^{2} 
			+ \| u(t_{n,\beta}) \|_{r+1}^{2} \big)  \\
			\leq& C(\theta) h^{2r} (k_{n}+k_{n-1})^{3} \int_{t_{n-1}}^{t_{n+1}} \| u_{tt} \|_{r+1}^{2} dt 
			+ C h^{2r} \| u(t_{n,\beta}) \|_{r+1}^{2} 
		\end{align*}
		\begin{align*}
			\| \nabla \eta_{n,\beta} \|^{2}
			\leq& C h^{2r} \| {u}_{n,\beta} \|_{r+1}^{2} 
			\leq C h^{2r} \big( \| {u}_{n,\beta} - u(t_{n,\beta}) \|_{r+1}^{2} 
			+ \| u(t_{n,\beta}) \|_{r+1}^{2} \big)  \\
			\leq& C(\theta) h^{2r} (k_{n}+k_{n-1})^{3} \int_{t_{n-1}}^{t_{n+1}} \| u_{tt} \|_{r+1}^{2} dt 
			+ C h^{2r} \| u(t_{n,\beta}) \|_{r+1}^{2} 
		\end{align*}
		\normalcolor
	\end{confidential}
	\begin{align*}
		\| \!\nabla \widetilde{\eta}_{n}\! \|^{2}
		\leq& C h^{2r} \| \!\widetilde{u}_{n}\! \|_{r\!+\!1}^{2} 
		\leq C(\!\theta\!) h^{2r} (k_{n}\!+\!k_{n-1})^{3} 
		\int_{t_{n\!-\!1}}^{t_{n\!+\!1}} \| \! u_{tt}\! \|_{r\!+\!1}^{2} dt 
		\!+\! C h^{2r} \|\! u(t_{n,\beta})\! \|_{r\!+\!1}^{2}, \\
		\| \!\nabla \eta_{n,\beta} \! \|^{2}
		\leq& C h^{2r} \| \! {u}_{n,\beta} \! \|_{r\!+\!1}^{2} 
		\leq C(\!\theta\!) h^{2r} (k_{n}\!+\!k_{n-1})^{3} 
		\int_{t_{n\!-\!1}}^{t_{n\!+\!1}} \| \!u_{tt} \!\|_{r\!+\!1}^{2} dt 
		\!+\! C h^{2r} \| \!u(t_{n,\beta}) \!\|_{r\!+\!1}^{2}.
	\end{align*}
	\eqref{eq:non-linear-bound2} becomes 
	\begin{align}
		\label{eq:non-linear-bound3}
		b \big(\! \widetilde{u}_{n}, u_{n\!,\!\beta}, \phi_{n\!,\!\beta}^{h} \!\big) 
		\!-\! b \big(\! \widetilde{u}_{n}^{h}, u_{n\!,\!\beta}^{h}, \phi_{n\!,\!\beta}^{h} \!\big)  
		\leq& \!\frac{C(\!\theta\!) \!\| u_{n\!,\!\beta} \|_{2}^{2} }{\nu} \! 
		\big(\! \| \phi_{n}^{h} \|^{2} \!+\! \| \phi_{n\!-\!1}^{h} \|^{2} \!\big) 
		\!+\! \frac{\nu}{16}\! \|\! \nabla \phi_{n\!,\!\beta}^{h} \|^{2}    \\
		\!+\! \frac{C(\!\theta\!) \!h^{2r} }{\nu} \! &\| | \!\nabla u | \|_{\infty\!,\!0}^{2}
		\Big(\! k_{\rm{max}}^{3}\! \int_{t_{n\!-\!1}}^{t_{n\!+\!1}} \| u_{tt} \|_{r\!+\!1}^{2}\! dt 
		\!+\! \| u(t_{n\!,\!\beta}) \|_{r\!+\!1}^{2} \!\Big) \notag 
	\end{align}
	We set $q^{h}$ to be $L^2$ projection of $p(t_{n,\beta})$ onto $Q^{h}$ 
	in \eqref{eq:error-1-DLN} and use \eqref{eq:approx-thm}
	\begin{align}
		\label{eq:error-pressure}
		(p(t_{n,\beta}) \!-\! q^{h}, \nabla \cdot \phi_{n,\beta}^{h})
		\!\leq& \! \sqrt{d} \| p(t_{n,\beta}) \!-\! q^{h} \| \| \nabla \phi_{n,\beta}^{h} \| 
		\!\leq \! \frac{Ch^{2s\!+\!2}}{\nu} \| p(t_{n,\beta}) \|_{s\!+\!1}^{2}
		\!+\! \frac{\nu}{16} \| \nabla \phi_{n,\beta}^{h} \|^{2} 
	\end{align}
	Now we treat $\tau_{n}(\phi_{n,\beta}^{h})$: 
	by \eqref{eq:DLN-Consistency1} and \eqref{eq:DLN-Consistency2} in Lemma \ref{lemma:DLN-consistency}
	\begin{align}
		\label{eq:tau-term1}
		\Big( \frac{\sum_{\ell=0}^{2}{\alpha_{\ell}} u_{n-1+\ell}}{\widehat{k}_{n}}  
		- u(t_{n,\beta}), \phi_{n,\beta}^{h} \Big) 
		\leq& \Big\| \frac{\sum_{\ell=0}^{2}{\alpha_{\ell}} u_{n-1+\ell}}{\widehat{k}_{n}} 
		- u(t_{n,\beta}) \Big\|_{-1} 
		\| \nabla \phi_{n,\beta}^{h} \|   \\
		\leq& \frac{C(\theta)}{\nu} k_{\rm{max}}^{3} \int_{t_{n-1}}^{t_{n+1}} \| u_{ttt} \|_{-1}^{2} dt
		+ \frac{\nu}{16} \| \nabla \phi_{n,\beta}^{h} \|^{2}.  \notag 
	\end{align}
	\begin{align}
		\label{eq:tau-term2}
		\nu \big( \nabla (u_{n,\beta} - u(t_{n,\beta})), \nabla \phi_{n,\beta}^{h} \big) 
		\leq& \nu \big\| \nabla (u_{n,\beta} - u(t_{n,\beta})) \big\| 
		\| \nabla \phi_{n,\beta}^{h} \| \\
		\leq& C(\theta) \nu k_{\rm{max}}^{3} \int_{t_{n-1}}^{t_{n+1}} \| \nabla u_{tt} \|^{2} dt 
		+ \frac{\nu}{16} \| \nabla \phi_{n,\beta}^{h} \|^{2}. \notag 
	\end{align}
	\begin{align}
		\label{eq:tau-term5}
		(f(t_{n,\beta}) - f_{n,\beta},\phi_{n,\beta}^{h}) 
		\leq& \| f(t_{n,\beta}) - f_{n,\beta} \|_{-1} \| \nabla \phi_{n,\beta}^{h} \|   \\
		\leq& \frac{C(\theta)}{\nu} k_{\rm{max}}^{3} \int_{t_{n-1}}^{t_{n+1}} \| f_{tt} \|_{-1}^{2} dt 
		+ \frac{\nu}{16} \| \nabla \phi_{n,\beta}^{h} \|^{2}.   \notag 
	\end{align}
	\begin{confidential}
		\color{darkblue}
		\begin{align*}
			&b \big( \widetilde{u}_{n}, u_{n,\beta}, \phi_{n,\beta}^{h} \big)
			- b ( u(t_{n,\beta}), u(t_{n,\beta}), \phi_{n,\beta}^{h} ) \\
			=& b \big( \widetilde{u}_{n}, u_{n,\beta}, \phi_{n,\beta}^{h} \big)
			- b \big( u(t_{n,\beta}), u_{n,\beta}, \phi_{n,\beta}^{h} \big) 
			+ b \big( u(t_{n,\beta}), u_{n,\beta}, \phi_{n,\beta}^{h} \big)
			- b ( u(t_{n,\beta}), u(t_{n,\beta}), \phi_{n,\beta}^{h} )
		\end{align*}
		\normalcolor
	\end{confidential}
	By \eqref{eq:b-bound1} and \eqref{eq:DLN-Consistency1} in Lemma \ref{lemma:DLN-consistency}
	\begin{align}
		\label{eq:tau-nonlinear}
		&b \big( \widetilde{u}_{n}, u_{n,\beta}, \phi_{n,\beta}^{h} \big)
		- b ( u(t_{n,\beta}), u(t_{n,\beta}), \phi_{n,\beta}^{h} )  \\
		=& b \big( \widetilde{u}_{n} - u(t_{n,\beta}), u_{n,\beta}, \phi_{n,\beta}^{h} \big)
		+ b \big( u(t_{n,\beta}), u_{n,\beta} - u(t_{n,\beta}), \phi_{n,\beta}^{h} \big) \notag \\
		\leq& C \| \nabla \big( \widetilde{u}_{n} - u(t_{n,\beta}) \big) \|
		\| \nabla u_{n,\beta} \| \| \nabla \phi_{n,\beta}^{h} \| 
		+ C \| \nabla u(t_{n,\beta}) \| \| \nabla \big( u_{n,\beta} - u(t_{n,\beta}) \big) \|
		\| \nabla \phi_{n,\beta}^{h} \|    \notag \\
		\leq& \frac{C(\theta)}{\nu} k_{\rm{max}}^{3} \big( \| |u| \|_{\infty,1}^{2} 
		+ \| |u| \|_{\infty,1,\beta}^{2} \big)
		\int_{t_{n-1}}^{t_{n+1}} \| \nabla u_{tt} \|^{2} dt 
		+ \frac{\nu}{16} \| \nabla \phi_{n,\beta}^{h} \|^{2}. \notag 
	\end{align}
	\begin{confidential}
		\color{darkblue}
		\begin{align*}
			C \| \nabla \big( \widetilde{u}_{n} - u(t_{n,\beta}) \big) \|
		\| \nabla u_{n,\beta} \| \| \nabla \phi_{n,\beta}^{h} \|      
		\leq& C \| \nabla \big( \widetilde{u}_{n} - u(t_{n,\beta}) \big) \|
		\| |u| \|_{\infty,1} \| \nabla \phi_{n,\beta}^{h} \| \\
		\leq& \frac{\nu}{32} \| \nabla \phi_{n,\beta}^{h} \|^{2}     
		+ \frac{C}{\nu} \| \nabla \big( \widetilde{u}_{n} - u(t_{n,\beta}) \big) \|^{2}
		\| |u| \|_{\infty,1}^{2} \\
		\leq& \frac{\nu}{32} \| \nabla \phi_{n,\beta}^{h} \|^{2}
		+ \frac{C(\theta)}{\nu} (k_{n} + k_{n-1})^{3} \| |u| \|_{\infty,1}^{2}
		\int_{t_{n-1}}^{t_{n+1}} \| \nabla u_{tt} \|^{2} dt \\
		\leq& \frac{\nu}{32} \| \nabla \phi_{n,\beta}^{h} \|^{2}
		+ \frac{C(\theta)}{\nu} (k_{n} + k_{n-1})^{3} \| |u| \|_{\infty,1}^{2}
		\int_{t_{n-1}}^{t_{n+1}} \| \nabla u_{tt} \|^{2} dt
		\end{align*}
		\begin{align*}
			C \| \nabla u(t_{n,\beta}) \| \| \nabla \big( u_{n,\beta} - u(t_{n,\beta}) \big) \|
		\| \nabla \phi_{n,\beta}^{h} \| 
		\leq& \frac{\nu}{32} \| \nabla \phi_{n,\beta}^{h} \|^{2} 
		+ \frac{C}{\nu} \| \nabla \big( \widetilde{u}_{n} - u(t_{n,\beta}) \big) \|^{2}
		\| \nabla u(t_{n,\beta}) \|^{2}    \\
		\leq& \frac{\nu}{32} \| \nabla \phi_{n,\beta}^{h} \|^{2} 
		+ \frac{C}{\nu} \| |u| \|_{\infty,1,\beta}^{2}
		\| \nabla \big( \widetilde{u}_{n} - u(t_{n,\beta}) \big) \|^{2} \\
		\leq& \frac{\nu}{32} \| \nabla \phi_{n,\beta}^{h} \|^{2} 
		+ \frac{C(\theta)}{\nu} \| |u| \|_{\infty,1,\beta}^{2} (k_{n}+k_{n-1})^{3}
		\int_{t_{n-1}}^{t_{n+1}} \| \nabla u_{tt} \|^{2} dt 
		\end{align*}
		\normalcolor
	\end{confidential}
	We combine \eqref{eq:error-diff-eta-phi}, \eqref{eq:non-linear-bound3}, \eqref{eq:error-pressure}, 
	\eqref{eq:tau-term1}, \eqref{eq:tau-term2}, \eqref{eq:tau-term5}, \eqref{eq:tau-nonlinear} and 
	sum \eqref{eq:error-1-DLN} over $n$ from $1$ to $N-1$ 
	\begin{confidential}
		\color{darkblue}
		\begin{align*}
			&\begin{Vmatrix}
				{\phi_{n+1}^{h}} \\
				{\phi_{n}^{h}}
			\end{Vmatrix}
			_{G(\theta)}^{2} -
			\begin{Vmatrix}
				{\phi_{n}^{h}} \\
				{\phi_{n-1}^{h}}
			\end{Vmatrix}%
			_{G(\theta)}^{2} 
			+ \Big\| \sum_{\ell=0}^{2} \gamma_{\ell}^{(n)} \phi_{n-1+\ell}^{h} \Big\| ^{2} 
			+ \nu \widehat{k}_{n} \| \nabla \phi_{n,\beta}^{h} \|^2    \\
			\leq&  \frac{C(\theta) h^{2r+2}}{\nu} \int_{t_{n-1}}^{t_{n+1}} \| u_{t} \|_{r+1}^{2} dt
			+ \frac{\nu \widehat{k}_{n}}{16} \| \nabla \phi_{n,\beta}^{h} \|^{2}
			+ \frac{C(\theta)}{\nu} \widehat{k}_{n} \| u_{n,\beta} \|_{2}^{2}  
			\big( \| \phi_{n}^{h} \|^{2} + \| \phi_{n-1}^{h} \|^{2} \big) 
			+ \frac{\nu \widehat{k}_{n}}{16} \| \nabla \phi_{n,\beta}^{h} \|^{2}    \\
			+& \frac{C(\theta) h^{2r}}{\nu}  \| | \nabla u | \|_{\infty,0}^{2}
			\Big( k_{\rm{max}}^{4} \int_{t_{n-1}}^{t_{n+1}} \| u_{tt} \|_{r+1}^{2} dt 
			+ \widehat{k}_{n} \| u(t_{n,\beta}) \|_{r+1}^{2} \Big)  \\
			+& \frac{Ch^{2s+2}}{\nu} \widehat{k}_{n} \| p(t_{n,\beta}) \|_{s+1}^{2}
			+ \frac{\nu \widehat{k}_{n}}{16} \| \nabla \phi_{n,\beta}^{h} \|^{2} 
			+ \frac{C(\theta)}{\nu} k_{\rm{max}}^{4} \int_{t_{n-1}}^{t_{n+1}} \| u_{ttt} \|_{-1}^{2} dt
			+ \frac{\nu \widehat{k}_{n}}{16} \| \nabla \phi_{n,\beta}^{h} \|^{2} \\
			+& C(\theta) \nu k_{\rm{max}}^{4} \int_{t_{n-1}}^{t_{n+1}} \| \nabla u_{tt} \|^{2} dt 
			+ \frac{\nu \widehat{k}_{n}}{16} \| \nabla \phi_{n,\beta}^{h} \|^{2} 
			+ \frac{C(\theta)}{\nu} k_{\rm{max}}^{4} \int_{t_{n-1}}^{t_{n+1}} \| f_{tt} \|_{-1}^{2} dt 
			+ \frac{\nu \widehat{k}_{n}}{16} \| \nabla \phi_{n,\beta}^{h} \|^{2} \\
			+& \frac{C(\theta)}{\nu} k_{\rm{max}}^{4} \big( \| |u| \|_{\infty,1}^{2} 
			+ \| |u| \|_{\infty,1,\beta}^{2} \big)
			\int_{t_{n-1}}^{t_{n+1}} \| \nabla u_{tt} \|^{2} dt 
			+ \frac{\nu \widehat{k}_{n}}{16} \| \nabla \phi_{n,\beta}^{h} \|^{2}
		\end{align*}
		\begin{align*}
			&\begin{Vmatrix}
				{\phi_{N}^{h}} \\
				{\phi_{N-1}^{h}}
			\end{Vmatrix}
			_{G(\theta)}^{2} -
			\begin{Vmatrix}
				{\phi_{1}^{h}} \\
				{\phi_{0}^{h}}
			\end{Vmatrix}%
			_{G(\theta)}^{2} 
			+ \sum_{n=1}^{N-1} \Big\| \sum_{\ell=0}^{2} \gamma_{\ell}^{(n)} \phi_{n-1+\ell}^{h} \Big\| ^{2} 
			+ \frac{\nu}{2} \sum_{n=1}^{N-1} \widehat{k}_{n} \| \nabla \phi_{n,\beta}^{h} \|^2    \\
			\leq& \frac{C(\theta) h^{2r+2}}{\nu} \sum_{n=1}^{N-1} 
			\int_{t_{n-1}}^{t_{n+1}} \| u_{t} \|_{r+1}^{2} dt
			+ \frac{C(\theta)}{\nu} \sum_{n=1}^{N-1} \widehat{k}_{n} \| u_{n,\beta} \|_{2}^{2}  
			\big( \| \phi_{n}^{h} \|^{2} + \| \phi_{n-1}^{h} \|^{2} \big) \\
			+& \frac{C(\theta) h^{2r}}{\nu} \| |\nabla u| \|_{\infty,0}^{2} 
			\Big( k_{\rm{max}}^{4} \sum_{n=1}^{N-1} \int_{t_{n-1}}^{t_{n+1}} \| u_{tt} \|_{r+1}^{2} dt 
			+ \sum_{n=1}^{N-1} (k_{n} + k_{n-1}) \| u(t_{n,\beta}) \|_{r+1}^{2} \Big)  \\
			+& \frac{C(\theta) h^{2s+2}}{\nu} \sum_{n=1}^{N-1} (k_{n} + k_{n-1}) \| p(t_{n,\beta}) \|_{s+1}^{2} 
			+ \frac{C(\theta)}{\nu} k_{\rm{max}}^{4} \sum_{n=1}^{N-1}
			\int_{t_{n-1}}^{t_{n+1}} \| u_{ttt} \|_{-1}^{2} dt            \\
			+& C(\theta) \nu k_{\rm{max}}^{4} \sum_{n=1}^{N-1}
			\int_{t_{n-1}}^{t_{n+1}} \| \nabla u_{tt} \|^{2} dt 
			+ \frac{C(\theta)}{\nu} k_{\rm{max}}^{4} \sum_{n=1}^{N-1} 
			\int_{t_{n-1}}^{t_{n+1}} \| f_{tt} \|_{-1}^{2} dt \\
			+& \frac{C(\theta)}{\nu} k_{\rm{max}}^{4} \big( \| |u| \|_{\infty,1}^{2} 
			+ \| |u| \|_{\infty,1,\beta}^{2} \big) \sum_{n=1}^{N-1}
			\int_{t_{n-1}}^{t_{n+1}} \| \nabla u_{tt} \|^{2} dt 
		\end{align*}
		\normalcolor
	\end{confidential}
	\begin{align}
		\label{eq:phi-estimator-L2}
		&\begin{Vmatrix}
			{\phi_{N}^{h}} \\
			{\phi_{N-1}^{h}}
		\end{Vmatrix}
		_{G(\theta)}^{2} -
		\begin{Vmatrix}
			{\phi_{1}^{h}} \\
			{\phi_{0}^{h}}
		\end{Vmatrix}%
		_{G(\theta)}^{2} 
		+ \sum_{n=1}^{N-1} \Big\| \sum_{\ell=0}^{2} \gamma_{\ell}^{(n)} \phi_{n-1+\ell}^{h} \Big\| ^{2} 
		+ \frac{\nu}{2} \sum_{n=1}^{N-1} \widehat{k}_{n} \| \nabla \phi_{n,\beta}^{h} \|^2    \\
		\leq& \frac{C(\theta) h^{2r+2}}{\nu} \sum_{n=1}^{N-1} 
		\int_{t_{n-1}}^{t_{n+1}} \| u_{t} \|_{r+1}^{2} dt
		+ \frac{C(\theta)}{\nu} \sum_{n=1}^{N-1} \widehat{k}_{n} \| u_{n,\beta} \|_{2}^{2}  
		\big( \| \phi_{n}^{h} \|^{2} + \| \phi_{n-1}^{h} \|^{2} \big) \notag \\
		&+ \frac{C(\theta) h^{2r}}{\nu} \| |\nabla u| \|_{\infty,0}^{2} 
		\Big( k_{\rm{max}}^{4} \sum_{n=1}^{N-1} \int_{t_{n-1}}^{t_{n+1}} \| u_{tt} \|_{r+1}^{2} dt 
		+ \sum_{n=1}^{N-1} (k_{n} + k_{n-1}) \| u(t_{n,\beta}) \|_{r+1}^{2} \Big)  \notag \\
		&+ \frac{C(\theta)h^{2s+2}}{\nu} \sum_{n=1}^{N-1} (k_{n} + k_{n-1}) \| p(t_{n,\beta}) \|_{s+1}^{2} 
		+ \frac{C(\theta)k_{\rm{max}}^{4}}{\nu} \sum_{n=1}^{N-1}
		\int_{t_{n-1}}^{t_{n+1}} \| u_{ttt} \|_{-1}^{2} dt  \notag \\
		&+ C(\theta) \nu k_{\rm{max}}^{4} \sum_{n=1}^{N-1}
		\int_{t_{n-1}}^{t_{n+1}} \| \nabla u_{tt} \|^{2} dt 
		+ \frac{C(\theta) k_{\rm{max}}^{4}}{\nu} \sum_{n=1}^{N-1} 
		\int_{t_{n-1}}^{t_{n+1}} \| f_{tt}  \|_{-1}^{2} dt  \notag \\
		&+ \frac{C(\theta) \!k_{\rm{max}}^{4}}{\nu}\! \big( \| |u| \|_{\infty,1}^{2} 
		+ \| |u| \|_{\infty,1,\beta}^{2} \!\big) \!\sum_{n=1}^{N-1}
		\int_{t_{n-1}}^{t_{n+1}} \| \nabla u_{tt} \|^{2} \!dt.  \notag 
	\end{align}
	By the definition of the $\| \cdot \|_{G(\theta)}$-norm in \eqref{eq:G-norm}, 
	\eqref{eq:phi-estimator-L2} becomes 
	\begin{align}
		\label{eq:phi-estimator-L2-1}
		& \| \phi_{N}^{h} \|^2 
		+ C(\theta) \nu \sum_{n=1}^{N-1} \widehat{k}_{n} \| \nabla \phi_{n,\beta}^{h} \|^2    \\
		\leq& \! \frac{C(\!\theta\!)}{\nu}\! \Big[\! \widehat{k}_{N\!-\!1}\! \|\! u_{N\!-\!1\!,\!\beta}\! \|_{2}^{2}
		\| \!\phi_{N\!-\!1}^{h}\! \|^{2}\!\! +\!\! \sum_{n\!=\!1}^{N\!-\!2}
		\big(\! \widehat{k}_{n\!+\!1}\! \|\! u_{n\!+\!1,\beta}\! \|_{2}^{2}\! 
		+\! \widehat{k}_{n}\! \|\! u_{n,\beta} \!\|_{2}^{2}\!  \big)\! \|\! \phi_{n}^{h}\! \|^{2} 
		\!+\! \widehat{k}_{1} \|\! u_{1,\beta}\! \|_{2}^{2} \|\! \phi_{0}^{h}\! \|^{2}\! \Big]  \notag \\
		+& \frac{C(\theta) h^{2r+2}}{\nu} \| u_{t} \|_{2,r+1}^{2}
		+ \frac{C(\theta) h^{2r}}{\nu} 
		\| |\nabla u| \|_{\infty,0}^{2}  \big( k_{\rm{max}}^{4} \| u_{tt} \|_{2,r+1}^{2}
		+ \| |u|\|_{2,r+1,\beta}^{2} \big) \notag \\
		+&\! \frac{C(\theta)h^{2s\!+\!2}}{\nu} \| |p| \|_{2,s\!+\!1,\beta}^{2} 
		\!+\! \frac{C(\!\theta\!)k_{\rm{max}}^{4}}{\nu} \| u_{ttt} \|_{2,-1}^{2} 
		\!+\! C(\!\theta\!) \nu k_{\rm{max}}^{4} \| \nabla u_{tt} \|_{2,0}^{2} 
		\!+\! \frac{C(\theta) \! k_{\rm{max}}^{4}}{\nu} \| f_{tt} \|_{2,\!-1}^{2}   \notag \\
		\!+&\! \frac{C(\theta) k_{\rm{max}}^{4}}{\nu}\! \big( \| |u| \|_{\infty,1}^{2} 
		\!+\! \| |u| \|_{\infty,1,\beta}^{2} \big) \| \nabla u_{tt} \|_{2,0}^{2}         
		+ C(\theta) \big( \| \phi_{1}^{h} \|^2 + \| \phi_{0}^{h} \|^2 \big). \notag 
	\end{align}
	By the discrete Gr$\rm{\ddot{o}}$nwall inequality without restrictions (\cite[p.369]{HR90_SIAM_NA}), 
	\eqref{eq:phi-estimator-L2-1} becomes
	\begin{confidential}
		\color{darkblue}
	
		\noindent The coefficient is 
		\begin{align*}
			\sum_{n=1}^{N-1}  \widehat{k}_{n} \| u_{n,\beta} \|_{2}^{2}
			\leq& \sum_{n=1}^{N-1} (k_{n} + k_{n-1}) \| u_{n,\beta} - u(t_{n,\beta}) \|_{2}^{2}
			+ \sum_{n=1}^{N-1} (k_{n} + k_{n-1}) \| u(t_{n,\beta}) \|_{2}^{2} \\
			\leq& C(\theta) \sum_{n=1}^{M-1} (k_{n} + k_{n-1})^{4} 
			\int_{t_{n-1}}^{t_{n+1}} \| u_{tt} \|_{2}^{2} dt 
			+ \sum_{n=1}^{M-1} (k_{n} + k_{n-1}) \| u(t_{n,\beta}) \|_{2}^{2} \\
			\leq& C(\theta) k_{\rm{max}}^{4} \| u_{tt} \|_{2,2}^{2} 
			+ \| |u| \|_{2,2,\beta}^{2}
		\end{align*}
		\normalcolor
	\end{confidential}
	\begin{align}
		\label{eq:phi-estimator-L2-2}
		\| \phi_{N}^{h} \|^2
		+ C(\theta) \nu \sum_{n=1}^{N-1} \widehat{k}_{n} \| \nabla \phi_{n,\beta}^{h} \|^2   
		\leq \exp \Big( \frac{C(\theta)}{\nu} \sum_{n=1}^{N-1}  \widehat{k}_{n} \| u_{n,\beta} \|_{2}^{2} \Big) 
		F_{1},
	\end{align}
	where
	\begin{align}
		\label{eq:def_F1}
		F_{1} 
		= & \frac{C(\theta) h^{2r+2}}{\nu} \| u_{t} \|_{2,r+1}^{2}
		+ \frac{C(\theta) h^{2r}}{\nu} 
		\| |\nabla u| \|_{\infty,0}^{2}  \big( k_{\rm{max}}^{4} \| u_{tt} \|_{2,r+1}^{2}
		+ \| |u|\|_{2,r+1,\beta}^{2} \big) \notag \\
		&+\! \frac{C(\theta) h^{2s\!+\!2}}{\nu} \| |p| \|_{2,s\!+\!1,\beta}^{2} 
		\!+\! \frac{C(\!\theta\!)k_{\rm{max}}^{4}}{\nu} \| u_{ttt} \|_{2,-1}^{2} 
		\!+\! C(\!\theta\!) \nu k_{\rm{max}}^{4} \| \nabla u_{tt} \|_{2,0}^{2}     \notag \\
		&+\! \frac{C(\theta) k_{\rm{max}}^{4}}{\nu} \| f_{tt} \|_{2,-1}^{2}       
		\!+\! \frac{C(\theta) k_{\rm{max}}^{4}}{\nu}\! \big( \| |u| \|_{\infty,1}^{2} 
		\!+\! \| |u| \|_{\infty,1,\beta}^{2} \big) \| \nabla u_{tt} \|_{2,0}^{2}   \notag \\       
		&+ C(\theta) \big( \| \phi_{1}^{h} \|^2 + \| \phi_{0}^{h} \|^2 \big).
	\end{align}
	By triangle inequality and \eqref{eq:DLN-Consistency1} in Lemma \ref{lemma:DLN-consistency}, 
	\eqref{eq:phi-estimator-L2-2} can be simplifed 
	\begin{align*}
		\| \phi_{N}^{h} \|^2
		+ C(\theta) \nu \sum_{n=1}^{N-1} \widehat{k}_{n} \| \nabla \phi_{n,\beta}^{h} \|^2   
		\leq \exp \Big[ \frac{C(\theta)}{\nu} \big( k_{\rm{max}}^{4} \| u_{tt} \|_{2,2}^{2} 
		+ \| |u| \|_{2,2,\beta}^{2} \big) \Big] F_{1}.
	\end{align*}
	By triangle inequality, \eqref{eq:approx-thm}, \eqref{eq:DLN-Consistency1} 
	in Lemma \ref{lemma:DLN-consistency} and \eqref{eq:phi-estimator-L2-2}
	\begin{confidential}
		\color{darkblue}
		\begin{align*}
			&\max_{0 \leq n \leq N}\| e_{n}^{u} \|  
			\leq \max_{0 \leq n \leq N}\| \phi_{n}^{h} \| + \max_{0 \leq n \leq N}\| \eta_{n} \| \\
			\leq& \exp \Big[ \frac{C(\theta)}{\nu} \big( k_{\rm{max}}^{4} \| u_{tt} \|_{2,2}^{2} 
			+ \| |u| \|_{2,2,\beta}^{2} \big) \Big] \sqrt{F_{1}} 
			+ Ch^{r} \| |u| \|_{\infty,r}.
		\end{align*}
		\begin{align*}
			&\Big( \nu \sum_{n=1}^{N-1} \widehat{k}_{n} \| \nabla e_{n,\beta}^{u} \|^{2} \Big)^{1/2}  \\
			\leq& \Big( 2 \nu \sum_{n=1}^{N-1} \widehat{k}_{n} \| \nabla \phi_{n,\beta}^{h} \|^{2} \Big)^{1/2}
			+ \Big( 2 \nu \sum_{n=1}^{N-1} \widehat{k}_{n} \| \nabla \eta_{n,\beta} \|^{2} \Big)^{1/2} \\
			\leq& \exp \Big[ \frac{C(\theta)}{\nu} \big( k_{\rm{max}}^{4} \| u_{tt} \|_{2,2}^{2} 
			+ \| |u| \|_{2,2,\beta}^{2} \big) \Big] \sqrt{F_{1}} 
			+ Ch^{r} \Big( \nu \sum_{n=1}^{N-1} \widehat{k}_{n} \| u_{n,\beta} \|_{r+1}^2 \Big)^{1/2} \\
			\leq& \exp \Big[ \frac{C(\theta)}{\nu} \big( k_{\rm{max}}^{4} \| u_{tt} \|_{2,2}^{2} 
			+ \| |u| \|_{2,2,\beta}^{2} \big) \Big] \sqrt{F_{1}} \\
			&+ C\sqrt{\nu} h^{r} \Big( \sum_{n=1}^{N-1} \widehat{k}_{n} \| u_{n,\beta} - u(t_{n,\beta}) \|_{r+1}^2 + \sum_{n=1}^{N-1} \widehat{k}_{n} \| u(t_{n,\beta}) \|_{r+1}^2 \Big)^{1/2} \\
			\leq& \exp \Big[ \frac{C(\theta)}{\nu} \big( k_{\rm{max}}^{4} \| u_{tt} \|_{2,2}^{2} 
			+ \| |u| \|_{2,2,\beta}^{2} \big) \Big] \sqrt{F_{1}} \\
			&+ C\sqrt{\nu} h^{r} \Big( C(\theta) \sum_{n=1}^{M-1} (k_{n}+k_{n-1})^{4} 
			\int_{t_{n-1}}^{t_{n+1}}\| u_{tt} \|_{r+1}^2 dt 
			+ \sum_{n=1}^{M-1} (k_{n}+k_{n-1}) \| u(t_{n,\beta}) \|_{r+1}^2 \Big)^{1/2} \\
			\leq& \exp \Big[ \frac{C(\theta)}{\nu} \big( k_{\rm{max}}^{4} \| u_{tt} \|_{2,2}^{2} 
			+ \| |u| \|_{2,2,\beta}^{2} \big) \Big] \sqrt{F_{1}} 
			+ C(\theta) \sqrt{\nu} h^{r} \Big( k_{\rm{max}}^{4} \| u_{tt} \|_{2,r+1}^{2}
			+ \| |u| \|_{2,r+1,\beta}^2 \Big)^{1/2} \\
			\leq& \exp \Big[ \frac{C(\theta)}{\nu} \big( k_{\rm{max}}^{4} \| u_{tt} \|_{2,2}^{2} 
			+ \| |u| \|_{2,2,\beta}^{2} \big) \Big] \sqrt{F_{1}}
			+ C(\theta) \sqrt{\nu} h^{r} \big( k_{\rm{max}}^{2} \| u_{tt} \|_{2,r+1}
			+ \| |u| \|_{2,r+1,\beta} \big),
		\end{align*}

		\normalcolor
	\end{confidential}
	\begin{align}
			\label{eq:error-L2-inf-final-L2}
			\max_{0 \leq n \leq N}\| e_{n}^{u} \|  
			\leq& \max_{0 \leq n \leq N}\| \phi_{n}^{h} \| + \max_{0 \leq n \leq N}\| \eta_{n} \| \\
			\leq& \exp \Big[ \frac{C(\theta)}{\nu} \big( k_{\rm{max}}^{4} \| u_{tt} \|_{2,2}^{2} 
			+ \| |u| \|_{2,2,\beta}^{2} \big) \Big] \sqrt{F_{1}} 
			+ Ch^{r} \| |u| \|_{\infty,r}.  \notag 
	\end{align}
	\begin{align}
		\label{eq:error-L2-inf-final-H1}
		\Big(\! \nu \sum_{n=1}^{N\!-\!1} \widehat{k}_{n} \| \nabla e_{n,\beta}^{u} \|^{2} \!\Big)^{1/2}
		\leq& \Big(\! 2 \nu \sum_{n=1}^{N\!-\!1} \widehat{k}_{n} \| \nabla \phi_{n,\beta}^{h} \|^{2} \!\Big)^{1/2}
		\!+\! \Big(\! 2 \nu \sum_{n=1}^{N\!-\!1} \widehat{k}_{n} \| \nabla \eta_{n,\beta} \|^{2} \!\Big)^{1/2} \\
		\leq& \exp \Big[ \frac{C(\theta)}{\nu} \big( k_{\rm{max}}^{4} \| u_{tt} \|_{2,2}^{2} 
		+ \| |u| \|_{2,2,\beta}^{2} \big) \Big] \sqrt{F_{1}} \notag \\
		& \qquad \qquad \qquad + C(\theta) \sqrt{\nu} h^{r} \big( k_{\rm{max}}^{2} \| u_{tt} \|_{2,r+1}
		+ \| |u| \|_{2,r+1,\beta} \big). \notag 
	\end{align}
	We combine \eqref{eq:error-L2-inf-final-L2} and \eqref{eq:error-L2-inf-final-H1} 
	\begin{align}
		\label{eq:error-L2-inf-final}
		\max_{0 \!\leq \!n \!\leq\! N} \! \| e_{n}^{u} \| 
		\!+\! \Big(\! \nu \!\sum_{n\!=\!1}^{N\!-\!1} \!\widehat{k}_{n} \!\| \!\nabla e_{n,\beta}^{u} \|^{2} \!\Big)^{\frac{1}{2}} 
		&\!\leq \! \exp \!\Big[\! \frac{C(\theta)}{\nu} \big(\! k_{\rm{max}}^{4} \| u_{tt} \|_{2,2}^{2} 
		+ \| |u| \|_{2,2,\beta}^{2} \!\big) \!\Big] \sqrt{F_{1}} \\
		+\!& Ch^{r} \!\| |u| \|_{\infty,r}
		\!+\! C(\!\theta\!) \!\sqrt{\nu} h^{r} \!\big( \!k_{\rm{max}}^{2} \!\| u_{tt} \|_{2,r\!+\!1}
		\!+\! \| |u| \|_{2,r\!+\!1,\beta} \!\big),      \notag 
	\end{align}
	which implies \eqref{eq:error-L2-conclusion}.
	\end{proof}

	\begin{theorem}
		\label{thm:error-velocity-H1}
		Suppose the velocity $u \in X$ and the pressure $p \in Q$ of the NSE 
		in \eqref{eq:NSE} satisfy 
		\begin{gather*}
			u \in \ell^{\infty}(0,N;H^r) \cap \ell^{\infty}(0,N;H^2) \cap \ell^{\infty,\beta}(0,N;H^{2})
			\cap \ell^{2,\beta}(0,N;H^{r+1} \cap H^{2}), \\
			u_{t} \in L^{2}(0,T;H^{r+1}),  \ \ 
			u_{tt} \in L^{2}(0,T;H^{r+1} \cap H^{2}), \ \ 
			u_{ttt} \in L^{2}(0,T;X' \cap L^{2}), \\
			p \!\in\! \ell^{\infty}(0,N;H^{s\!+\!1}) \!\cap\! \ell^{2,\beta}(0,N;H^{\!s+\!1}), \
			p_{t} \!\in\! L^{2}(0,T;H^{s\!+\!1}), \ p_{tt} \!\in\! L^{2}(0,T;H^{\!s+\!1} \!\cap\! H^{1}),
		\end{gather*} and 
		body force $f \in L^{2}(0,T;X'\cap L^{2})$. Under the time step bounds in \eqref{eq:ratio-limit} and the time-diameter condition
		\begin{align}
			\label{eq:time-h-limit}
			k_{\rm{max}} \leq h^{1/4},
		\end{align}
		the numerical solutions by the semi-implicit DLN algorithm in \eqref{eq:DLN-Semi-Alg} satisfy 
		\begin{align}
			&\max_{0 \leq n \leq M}\| u_{n} - u_{n}^{h} \|_{1} 
			\leq \mathcal{O} \big( h^{r},h^{s+1}, k_{\rm{max}}^{2} \big), \label{eq:error-H1-conclusion} \\
			&\sum_{n=1}^{M-1} \frac{\widehat{k}_{n}}{\nu} 
			\Big\| \frac{\sum_{\ell=0}^{2} \alpha_{\ell}(u_{n\!-\!1\!+\!\ell}^{h} 
			\!-\! u_{n\!-\!1\!+\!\ell})}{\widehat{k}_{n}}  \Big\|^{2} 
			\leq \mathcal{O} \big( h^{r},h^{s+1}, k_{\rm{max}}^{2} \big). 
			\label{eq:error-diff-L2-conclusion}
		\end{align}
	\end{theorem}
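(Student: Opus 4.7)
The plan is to reuse the error decomposition $e_n^u = \phi_n^h - \eta_n$ and the error equation \eqref{eq:Diff-exact-DLN} from Theorem~\ref{thm:error-velocity-L2}, but to test with the discrete backward DLN derivative
\begin{gather*}
	D_{\alpha}\phi_n^h := \frac{1}{\widehat{k}_n} \sum_{\ell=0}^{2} \alpha_\ell \phi_{n-1+\ell}^h \in V^h
\end{gather*}
in place of $\phi_{n,\beta}^h$. With this choice, the first term on the LHS of \eqref{eq:Diff-exact-DLN} becomes $\widehat{k}_n \| D_\alpha \phi_n^h \|^2$, which yields the second conclusion \eqref{eq:error-diff-L2-conclusion} after summation. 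For the viscous term, I would apply the $G$-stability identity \eqref{eq:G-stab} with the inner-product space $Y = (L^2(\Omega))^d$ to the gradient sequence $\{\nabla \phi_n^h\}$: this converts $\nu(\nabla \phi_{n,\beta}^h, \nabla D_\alpha \phi_n^h)$ into a telescoping $\|\cdot\|_{G(\theta)}$-norm of $\|\nabla \phi_n^h\|$ plus a nonnegative gradient-dissipation term, which after summation controls $\max_n \|\nabla \phi_N^h\|^2$ and hence $\max_n \|\nabla e_n^u\|^2$ via triangle inequality and \eqref{eq:Stoke-Approx}.

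Next, every term on the RHS of \eqref{eq:Diff-exact-DLN} must be bounded against $\widehat{k}_n \| D_\alpha \phi_n^h \|^2$ and $\nu \widehat{k}_n \| \nabla \phi_{n,\beta}^h \|^2$. The linear-in-$\eta$ term, the pressure term (now tested against $\nabla \cdot D_\alpha \phi_n^h$, requiring an integration in time or the added regularity $p_t \in L^2(H^{s+1})$), and the truncation terms from Lemma~\ref{lemma:DLN-consistency} are routine Cauchy--Schwarz/Young estimates, producing the advertised $\mathcal{O}(h^r,h^{s+1},k_{\max}^2)$ contributions after absorbing $\tfrac{1}{2}\|D_\alpha \phi_n^h\|^2$ or $\tfrac{\nu}{2}\|\nabla \phi_{n,\beta}^h\|^2$ on the left. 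The $\nu(\nabla \eta_{n,\beta}, \nabla D_\alpha \phi_n^h)$ term no longer vanishes: I would use the Stokes projection together with Young's inequality to put $\|\nabla \eta_{n,\beta}\|^2$ into the $O(h^{2r})$ bucket and absorb $\|D_\alpha \phi_n^h\|^2$. Finally, the gradient-error bound at the terminal step combines $\|\nabla \phi_N^h\|^2$ with $\|\nabla \eta_N\|^2 \leq C h^{2r}\|u_N\|_{r+1}^2$.

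The main obstacle is the nonlinear difference $b(\widetilde u_n, u_{n,\beta}, D_\alpha \phi_n^h) - b(\widetilde u_n^h, u_{n,\beta}^h, D_\alpha \phi_n^h)$. I would split it exactly as in \eqref{eq:non-linear-bound1} into five pieces and apply the last bound in \eqref{eq:b-bound3}, $|b(u,v,w)| \leq C\|u\|_1 \|v\|_2 \|w\|$, whenever the middle argument is the smooth factor $u_{n,\beta}$ (using $u \in \ell^{2,\beta}(0,N;H^2)$). The truly problematic pieces are those with a discrete middle factor: $b(\widetilde u_n^h, \eta_{n,\beta}, D_\alpha \phi_n^h)$ and $b(\widetilde \phi_n^h, \eta_{n,\beta}, D_\alpha \phi_n^h)$. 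Since $\eta_{n,\beta}\notin H^2$ in general, I would trade a gradient via the inverse inequality \eqref{eq:inverse-estimator}, bounding $\|\eta_{n,\beta}\|_\infty \lesssim h^{-d/2}\|\eta_{n,\beta}\|$ or $\|D_\alpha\phi_n^h\|_1 \lesssim h^{-1}\|D_\alpha \phi_n^h\|$ together with the interpolation estimate \eqref{eq:b-bound2}. This introduces negative powers of $h$ that, when paired with the $O(k_{\max}^2)$ consistency factors and the $\ell^\infty(0,N;H^1)$ bound on $\phi_n^h$ obtained from Theorem~\ref{thm:error-velocity-L2}, force precisely the condition $k_{\max} \leq h^{1/4}$ to keep the cross terms absorbable into the left-hand side.

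Once all bounds are assembled, summing from $n=1$ to $N-1$ and applying the discrete Gr\"onwall inequality (no step restriction needed thanks to the unconditional-stability form in \cite[p.369]{HR90_SIAM_NA}, with exponential factor driven by $\|u\|_{\ell^{\infty,\beta}(H^2)}$) yields \eqref{eq:error-H1-conclusion} and \eqref{eq:error-diff-L2-conclusion} after the usual triangle inequality with $\eta_n$ and application of Lemma~\ref{lemma:DLN-consistency} to convert $\ell^{2,\beta}$-norms of $u_{n,\beta}$ into Bochner norms of $u$ and $u_{tt}$.
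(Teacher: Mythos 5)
Your overall architecture matches the paper's: test \eqref{eq:Diff-exact-DLN} with $\widehat{k}_n^{-1}\phi_{n,\alpha}^h$, apply the $G$-stability identity to the gradient sequence to get a telescoping bound on $\|\nabla\phi_N^h\|$, reuse Theorem~\ref{thm:error-velocity-L2} to control $\sum_n \nu\widehat{k}_n\|e_{n,\beta}^u\|_1^2$, and let the time-diameter condition $k_{\rm max}\le h^{1/4}$ keep the factor $h^{-1}F_2^2$ in the Gr\"onwall exponent bounded. Those parts are right, including your identification of where $k_{\rm max}\le h^{1/4}$ comes from.

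There is, however, one genuine gap: your treatment of $\nu(\nabla\eta_{n,\beta},\nabla\,\widehat{k}_n^{-1}\phi_{n,\alpha}^h)$. You assert the term ``no longer vanishes'' and propose to put $\|\nabla\eta_{n,\beta}\|^2$ into the $O(h^{2r})$ bucket while absorbing $\|\widehat{k}_n^{-1}\phi_{n,\alpha}^h\|^2$ on the left. But the pairing is between \emph{gradients}, and the left-hand side controls only the $L^2$ norm of $\widehat{k}_n^{-1}\phi_{n,\alpha}^h$, not its gradient. To strip the gradient off the test function you would need either integration by parts (unavailable, since $\eta_{n,\beta}$ is a Stokes-projection error and not in $H^2$) or the inverse inequality \eqref{eq:inverse-estimator}, which costs $h^{-1}$ and, after Young's inequality, turns $\|\nabla\eta_{n,\beta}\|^2\lesssim h^{2r}$ into $h^{2r-2}$ --- a full order below the advertised rate \eqref{eq:error-H1-conclusion}. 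The paper sidesteps this entirely by a structural choice you missed: for this theorem the Stokes projection is taken of the \emph{pair} $(u_n,p_n)$ rather than $(u_n,0)$, so that by the definition of the projection and the fact that $\widehat{k}_n^{-1}\phi_{n,\alpha}^h\in V^h$ one has exactly
\begin{gather*}
	\nu\big(\nabla\eta_{n,\beta},\nabla\,\widehat{k}_n^{-1}\phi_{n,\alpha}^h\big)
	-\big(p_{n,\beta},\nabla\cdot\widehat{k}_n^{-1}\phi_{n,\alpha}^h\big)=0,
\end{gather*}
i.e.\ the troublesome viscous term cancels against the pressure term, and only the consistency piece $\big(p_{n,\beta}-p(t_{n,\beta}),\nabla\cdot\widehat{k}_n^{-1}\phi_{n,\alpha}^h\big)$ survives (handled with $p_{tt}\in L^2(0,T;H^1)$). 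Without this cancellation, or an explicit summation-by-parts in time to move the discrete derivative onto $\eta$, your estimate does not deliver the $O(h^r)$ rate. The remaining ingredients of your sketch (the five-piece splitting of the nonlinearity, the $h^{-1/2}$-type inverse estimates for the pieces with discrete factors, the final Gr\"onwall step) are consistent with the paper's proof.
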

	\begin{proof}
		Let $(\!P_{S}^{(\!u\!)} u_{n}, P_{S}^{(\!p\!)} p_{n}\!)$ be Stokes projection of $(\!u_n,p_n\!)$ 
		onto $V^{h} \times Q^{h}$. We set
		\begin{gather*}
			\phi_{n}^{h} = u_{n}^{h} - P_{S}^{(u)} u_{n}, \ \ \ \eta_{n} = u_{n} - P_{S}^{(u)} u_{n}, \ \ \ 
			e_{n}^{u} = \phi_{n}^{h} - \eta_{{n}}, \ \ \ 
			\widetilde{e}_{n}^{u} = \widetilde{u}_{n}^{h} - \widetilde{u}_{n}, \\
			\phi_{n,\alpha}^{h} \!=\! \sum_{\ell=0}^{2} \alpha_{\ell} \phi_{n\!-\!1\!+\!\ell}^{h}, \   
			\eta_{n,\alpha} \!=\! \sum_{\ell=0}^{2} \alpha_{\ell} \eta_{n\!-\!1\!+\!\ell}, \ 
			e_{n,\alpha}^{u} \!=\! \sum_{\ell=0}^{2} \alpha_{\ell} \big(u_{n\!-\!1\!+\!\ell}^{h} 
			\!-\! u_{n\!-\!1\!+\!\ell} \big) \!=\! \eta_{n,\alpha} \!-\! \phi_{n,\alpha}^{h}.
		\end{gather*}
		We let $v^{h} = \widehat{k}_{n}^{-1} \phi_{n,\alpha}^{h}$ in \eqref{eq:Diff-exact-DLN}. By the $G$-stability identity in \eqref{eq:G-stab}, \eqref{eq:Diff-exact-DLN} becomes 
		\begin{gather}
			\Big\| \widehat{k}_{n}^{-1} \phi_{n,\alpha}^{h} \Big\|^{2} + 
			\frac{\nu}{\widehat{k}_{n}} \Big(\begin{Vmatrix}
				\nabla {\phi_{n+1}^{h}} \\
				\nabla {\phi_{n}^{h}}
			\end{Vmatrix}
			_{G(\theta)}^{2} -
			\begin{Vmatrix}
				\nabla {\phi_{n}^{h}} \\
				\nabla {\phi_{n-1}^{h}}
			\end{Vmatrix}%
			_{G(\theta)}^{2} 
			+ \Big\| \nabla \big(\sum_{\ell=0}^{2} \gamma_{\ell}^{(n)} \phi_{n\!-\!1\!+\!\ell}^{h} \big) \Big\| ^{2} \Big) 
			\notag  \\
			= \!
			\big(\! \widehat{k}_{n}^{-1} \sum_{\ell=0}^{2}{\alpha_{\ell}} \eta_{n\!-\!1\!+\!\ell}, 
			\widehat{k}_{n}^{-1} \phi_{n,\alpha}^{h} \!\big) \!
			+ \! \nu \big( \nabla \eta_{n,\beta}, \nabla \widehat{k}_{n}^{-1} \phi_{n,\alpha}^{h} \big)
			\!-\! \big(\! p_{n,\beta}, \nabla \cdot \widehat{k}_{n}^{-1} \phi_{n,\alpha}^{h} \!\big) 
			\!-\! \tau_{n} \big(\! \widehat{k}_{n}^{-1} \phi_{n,\alpha}^{h} \! \big)
			\notag \\
			+ b \big( \widetilde{u}_{n}, u_{n,\beta}, \widehat{k}_{n}^{-1} \phi_{n,\alpha}^{h} \big) 
			- b \big( \widetilde{u}_{n}^{h}, u_{n,\beta}^{h}, \widehat{k}_{n}^{-1} \phi_{n,\alpha}^{h}  \big) 
			+ \big(p_{n,\beta} - p(t_{n,\beta}), \nabla \cdot \widehat{k}_{n}^{-1} \phi_{n,\alpha}^{h} \big).
			\label{eq:error-H1-1-DLN}
		\end{gather}
		By Cauchy Schwarz inequality, Young's inequality, \eqref{eq:approx-thm}, \eqref{eq:Stoke-Approx} and H$\ddot{\rm{o}}$lder's inequality
		\begin{confidential}
			\color{darkblue}
			\begin{align*}
				&\big( \widehat{k}_{n}^{-1} \sum_{\ell=0}^{2}{\alpha_{\ell}} \eta_{n-1+\ell}, 
			\widehat{k}_{n}^{-1} \phi_{n,\alpha}^{h} \big)
			\leq \| \widehat{k}_{n}^{-1} \sum_{\ell=0}^{2}{\alpha_{\ell}} \eta_{n-1+\ell} \|
			\| \widehat{k}_{n}^{-1} \phi_{n,\alpha}^{h} \| \\
			\leq& \frac{C}{\widehat{k}_{n}^{2}} \| \sum_{\ell=0}^{2}{\alpha_{\ell}} \eta_{n-1+\ell} \|^{2}
			+ \frac{1}{16}\| \widehat{k}_{n}^{-1} \phi_{n,\alpha}^{h} \|^{2}  \\
			\leq& \frac{C}{\widehat{k}_{n}^{2}} \big( h^{2r+2} 
			\| \sum_{\ell=0}^{2}{\alpha_{\ell}} u_{n-1+\ell} \|_{r+1}^{2} 
			+ \frac{h^{2s+4}}{\nu^{2}} \| \sum_{\ell=0}^{2}{\alpha_{\ell}} p_{n-1+\ell} \|_{s+1}^{2} \big) 
			+ \frac{1}{16}\| \widehat{k}_{n}^{-1} \phi_{n,\alpha}^{h} \|^{2}  \\
			\leq& \frac{C}{\widehat{k}_{n}^{2}} \Big( h^{2r+2} 
			C(\theta) (k_{n}+k_{n-1}) \int_{t_{n-1}}^{t_{n+1}} \| u_{t} \|_{r+1}^{2} dt
			+ \frac{h^{2s+4}}{\nu^{2}} C(\theta) (k_{n}+k_{n-1}) \int_{t_{n-1}}^{t_{n+1}} \| p_{t} \|_{s+1}^{2} dt \Big) 
			+ \frac{1}{16}\| \widehat{k}_{n}^{-1} \phi_{n,\alpha}^{h} \|^{2}
			\end{align*}
			\normalcolor
		\end{confidential}
		\begin{align}
			\label{eq:error-H1-diff-eta-phi}
			&\big( \widehat{k}_{n}^{-1} \sum_{\ell=0}^{2}{\alpha_{\ell}} \eta_{n-1+\ell}, 
			\widehat{k}_{n}^{-1} \phi_{n,\alpha}^{h} \big)  \\
			\leq& \frac{C}{\widehat{k}_{n}^{2}} \Big( h^{2r+2} 
			\Big\| \sum_{\ell=0}^{2}{\alpha_{\ell}} u_{n-1+\ell} \Big\|_{r+1}^{2} 
			+ \frac{h^{2s+4}}{\nu^{2}} \Big\| \sum_{\ell=0}^{2}{\alpha_{\ell}} p_{n-1+\ell} \Big\|_{s+1}^{2} \Big) 
			+ \frac{1}{16}\| \widehat{k}_{n}^{-1} \phi_{n,\alpha}^{h} \|^{2} \notag \\
			\leq& \frac{C(\theta)}{\widehat{k}_{n}} \Big( h^{2r+2} 
			\int_{t_{n-1}}^{t_{n+1}} \| u_{t} \|_{r+1}^{2} dt 
			+ \frac{h^{2s+4}}{\nu^{2}} \int_{t_{n-1}}^{t_{n+1}} \| p_{t} \|_{s+1}^{2} dt \Big) 
			+ \frac{1}{16}\| \widehat{k}_{n}^{-1} \phi_{n,\alpha}^{h} \|^{2}.   \notag 
		\end{align}
		By the definition of Stokes projection and the fact that 
		$\widehat{k}_{n}^{-1} \phi_{n,\alpha}^{h} \in V^{h} \subset X^{h}$ 
		\begin{confidential}
			\color{darkblue}
			\begin{align*}
				&\nu \big( \nabla \eta_{n,\beta}, \nabla \widehat{k}_{n}^{-1} \phi_{n,\alpha}^{h} \big)
			\!-\! \big( p_{n,\beta}, \nabla \cdot \widehat{k}_{n}^{-1} \phi_{n,\alpha}^{h} \big) \\
			= & \nu \big( \nabla \eta_{n,\beta}, \nabla \widehat{k}_{n}^{-1} \phi_{n,\alpha}^{h} \big)
			\!-\! \big( p_{n,\beta} - P_{S}^{(p)}p_{n,\beta} , \nabla \cdot \widehat{k}_{n}^{-1} \phi_{n,\alpha}^{h} \big) = 0.
			\end{align*}
			\normalcolor
		\end{confidential}
		\begin{gather*}
			\nu \big( \nabla \eta_{n,\beta}, \nabla \widehat{k}_{n}^{-1} \phi_{n,\alpha}^{h} \big)
			\!-\! \big( p_{n,\beta}, \nabla \cdot \widehat{k}_{n}^{-1} \phi_{n,\alpha}^{h} \big)
			= 0.
		\end{gather*}
		For  $\tau(\widehat{k}_{n}^{-1} \phi_{n,\alpha}^{h})$: 
		\begin{align}
			\label{eq:tau-H1-term1}
			\Big( \frac{\sum_{\ell=0}^{2}{\alpha_{\ell}} u_{n\!-\!1\!+\!\ell}}{\widehat{k}_{n}}  
			\!-\! u(t_{n,\beta}), \widehat{k}_{n}^{-1} \phi_{n,\alpha}^{h} \Big) 
			\leq& \Big\| \frac{\sum_{\ell=0}^{2}{\alpha_{\ell}} u_{n\!-\!1\!+\!\ell}}{\widehat{k}_{n}} 
			\!-\! u(t_{n,\beta}) \Big\| 
			\| \widehat{k}_{n}^{-1} \phi_{n,\alpha}^{h} \|   \\
			\leq& C(\theta) k_{\rm{max}}^{3} \int_{t_{n-1}}^{t_{n+1}} \| u_{ttt} \|^{2} dt
			+ \frac{1}{16} \| \widehat{k}_{n}^{-1} \phi_{n,\alpha}^{h} \|^{2}.  \notag 
		\end{align}
		\begin{align}
			\label{eq:tau-H1-term2}
			\nu \big( \nabla (u_{n,\beta} - u(t_{n,\beta})), \nabla \widehat{k}_{n}^{-1} \phi_{n,\alpha}^{h} \big) 
			=& \nu \big( \Delta (u_{n,\beta} - u(t_{n,\beta})), \widehat{k}_{n}^{-1} \phi_{n,\alpha}^{h} \big) \\
			\leq& C(\theta) \nu^{2} k_{\rm{max}}^{3} \int_{t_{n-1}}^{t_{n+1}} \| u_{tt} \|_{2}^{2} dt 
			+ \frac{1}{16} \| \widehat{k}_{n}^{-1} \phi_{n,\alpha}^{h} \|^{2}. \notag 
		\end{align}
		\begin{align}
		\label{eq:tau-H1-term5}
		(f(t_{n,\beta}) - f_{n,\beta},\widehat{k}_{n}^{-1} \phi_{n,\alpha}^{h}) 
		\leq& \| f(t_{n,\beta}) - f_{n,\beta} \| \| \widehat{k}_{n}^{-1} \phi_{n,\alpha}^{h} \|   \\
		\leq& C(\theta) k_{\rm{max}}^{3} \int_{t_{n-1}}^{t_{n+1}} \| f_{tt} \|^{2} dt 
		+ \frac{1}{16} \| \widehat{k}_{n}^{-1} \phi_{n,\alpha}^{h} \|^{2}.   \notag 
		\end{align}
		\begin{confidential}
			\color{darkblue}
			\begin{align*}
				&b \big( \widetilde{u}_{n}, u_{n,\beta}, \widehat{k}_{n}^{-1} \phi_{n,\alpha}^{h} \big)
				- b ( u(t_{n,\beta}), u(t_{n,\beta}), \widehat{k}_{n}^{-1} \phi_{n,\alpha}^{h} ) \\
				=& b \big( \widetilde{u}_{n}, u_{n,\beta}, \widehat{k}_{n}^{-1} \phi_{n,\alpha}^{h} \big)
				- b \big( u(t_{n,\beta}), u_{n,\beta}, \widehat{k}_{n}^{-1} \phi_{n,\alpha}^{h} \big) 
				+ b \big( u(t_{n,\beta}), u_{n,\beta}, \widehat{k}_{n}^{-1} \phi_{n,\alpha}^{h} \big)
				- b ( u(t_{n,\beta}), u(t_{n,\beta}), \widehat{k}_{n}^{-1} \phi_{n,\alpha}^{h} )
			\end{align*}
			\normalcolor
		\end{confidential}
		By \eqref{eq:b-bound1}, \eqref{eq:b-bound3} and \eqref{eq:DLN-Consistency1}
		\begin{align}
			\label{eq:tau-nonlinear-H1}
			&b \big( \widetilde{u}_{n}, u_{n,\beta}, \widehat{k}_{n}^{-1} \phi_{n,\alpha}^{h} \big)
			- b ( u(t_{n,\beta}), u(t_{n,\beta}), \widehat{k}_{n}^{-1} \phi_{n,\alpha}^{h} )  \\
			=& b \big( \widetilde{u}_{n} - u(t_{n,\beta}), u_{n,\beta}, \widehat{k}_{n}^{-1} \phi_{n,\alpha}^{h} \big)
			+ b \big( u(t_{n,\beta}), u_{n,\beta} - u(t_{n,\beta}), \widehat{k}_{n}^{-1} \phi_{n,\alpha}^{h} \big) \notag \\
			\leq& C \| \widetilde{u}_{n} - u(t_{n,\beta}) \|_{1}
			\| u_{n,\beta} \|_{2} \| \widehat{k}_{n}^{-1} \phi_{n,\alpha}^{h} \| 
			+ C \| u(t_{n,\beta}) \|_{2} \| u_{n,\beta} - u(t_{n,\beta}) \|_{1}
			\| \widehat{k}_{n}^{-1} \phi_{n,\alpha}^{h} \|    \notag \\
			\leq& C(\theta) k_{\rm{max}}^{3} \big( \| |u| \|_{\infty,2}^{2} 
			+ \| |u| \|_{\infty,2,\beta}^{2}\big) \int_{t_{n-1}}^{t_{n+1}} \| u_{tt} \|_{1}^{2} dt 
			+ \frac{1}{16} \| \widehat{k}_{n}^{-1} \phi_{n,\alpha}^{h} \|^{2}. \notag 
		\end{align}
		\begin{confidential}
			\color{darkblue}
			\begin{align*}
				&  b \big( \widetilde{u}_{n}^{h}, u_{n,\beta}^{h}, \widehat{k}_{n}^{-1} \phi_{n,\alpha}^{h} \big)
				- b \big( \widetilde{u}_{n}, u_{n,\beta}, \widehat{k}_{n}^{-1} \phi_{n,\alpha}^{h} \big)\\
				=& b \big( \widetilde{u}_{n}^{h}, u_{n,\beta}^{h}, \widehat{k}_{n}^{-1} \phi_{n,\alpha}^{h} \big) 
				- b \big( \widetilde{u}_{n}, u_{n,\beta}^{h}, \widehat{k}_{n}^{-1} \phi_{n,\alpha}^{h} \big) 
				+ b \big( \widetilde{u}_{n}, u_{n,\beta}^{h}, \widehat{k}_{n}^{-1} \phi_{n,\alpha}^{h} \big) 
				- b \big( \widetilde{u}_{n}, u_{n,\beta}, \widehat{k}_{n}^{-1} \phi_{n,\alpha}^{h} \big) \\
				=& b \big( \widetilde{e}_{n}, u_{n,\beta}^{h}, \widehat{k}_{n}^{-1} \phi_{n,\alpha}^{h} \big)
				+ b \big( \widetilde{u}_{n}, e_{n,\beta}, \widehat{k}_{n}^{-1} \phi_{n,\alpha}^{h} \big) \\
				=& b \big( \widetilde{e}_{n}, u_{n,\beta}^{h}, \widehat{k}_{n}^{-1} \phi_{n,\alpha}^{h} \big) 
				- b \big( \widetilde{e}_{n}, u_{n,\beta}, \widehat{k}_{n}^{-1} \phi_{n,\alpha}^{h} \big)
				+ b \big( \widetilde{e}_{n}, u_{n,\beta}, \widehat{k}_{n}^{-1} \phi_{n,\alpha}^{h} \big)
				+ b \big( \widetilde{u}_{n}, e_{n,\beta}, \widehat{k}_{n}^{-1} \phi_{n,\alpha}^{h} \big) \\
				=& b \big( \widetilde{e}_{n}, u_{n,\beta}, \widehat{k}_{n}^{-1} \phi_{n,\alpha}^{h} \big)
				+ b \big( \widetilde{u}_{n}, e_{n,\beta}, \widehat{k}_{n}^{-1} \phi_{n,\alpha}^{h} \big)
				+ b \big( \widetilde{e}_{n}, e_{n,\beta}, \widehat{k}_{n}^{-1} \phi_{n,\alpha}^{h} \big) 
			\end{align*}
			\normalcolor
		\end{confidential}
		For non-linear terms
		\begin{align*}
			& b \big( \widetilde{u}_{n}^{h}, u_{n,\beta}^{h}, \widehat{k}_{n}^{-1} \phi_{n,\alpha}^{h} \big)
			- b \big( \widetilde{u}_{n}, u_{n,\beta}, \widehat{k}_{n}^{-1} \phi_{n,\alpha}^{h} \big)
			\\
			= & b \big( \widetilde{e}_{n}^{u}, u_{n,\beta}, \widehat{k}_{n}^{-1} \phi_{n,\alpha}^{h} \big)
			+ b \big( \widetilde{u}_{n}, e_{n,\beta}^{u}, \widehat{k}_{n}^{-1} \phi_{n,\alpha}^{h} \big)
			+ b \big( \widetilde{e}_{n}^{u}, e_{n,\beta}^{u}, \widehat{k}_{n}^{-1} \phi_{n,\alpha}^{h} \big).
		\end{align*}
		By \eqref{eq:b-bound1}, \eqref{eq:b-bound3} and inverse inequality in \eqref{eq:inverse-estimator}
		\begin{align*}
			b \big( \widetilde{e}_{n}^{u}, u_{n,\beta}, \widehat{k}_{n}^{-1} \phi_{n,\alpha}^{h} \big) 
			=& b \big( \widetilde{\phi}_{n}^{h}, u_{n,\beta}, \widehat{k}_{n}^{-1} \phi_{n,\alpha}^{h} \big)
			- b \big( \widetilde{\eta}_{n}, u_{n,\beta}, \widehat{k}_{n}^{-1} \phi_{n,\alpha}^{h} \big) 
			\notag \\
			\leq& C  \| \widetilde{\phi}_{n}^{h} \|_{1} \| u_{n,\beta} \|_{2} 
			\| \widehat{k}_{n}^{-1} \phi_{n,\alpha}^{h} \|
			+ C  \| \widetilde{\eta}_{n} \|_{1} \| u_{n,\beta} \|_{2} 
			\| \widehat{k}_{n}^{-1} \phi_{n,\alpha}^{h} \|,  \notag \\
			b \big( \widetilde{u}_{n}, e_{n,\beta}^{u}, \widehat{k}_{n}^{-1} \phi_{n,\alpha}^{h} \big)
			\leq& C \| \widetilde{u}_{n} \|_{2} \| e_{n,\beta}^{u} \|_{1} 
			\| \widehat{k}_{n}^{-1} \phi_{n,\alpha}^{h} \|,  \notag \\
			b \big( \widetilde{e}_{n}, e_{n,\beta}^{u}, \widehat{k}_{n}^{-1} \phi_{n,\alpha}^{h} \big)
			=& b \big( \widetilde{\phi}_{n}^{h}, e_{n,\beta}^{u}, \widehat{k}_{n}^{-1} \phi_{n,\alpha}^{h} \big)
			- b \big( \widetilde{\eta}_{n}, e_{n,\beta}^{u}, \widehat{k}_{n}^{-1} \phi_{n,\alpha}^{h} \big)
			\notag \\
			\leq& C h^{-\frac{1}{2}} \| \widetilde{\phi}_{n}^{h} \|_{1} \| e_{n,\beta}^{u} \|_{1} 
			\| \widehat{k}_{n}^{-1} \phi_{n,\alpha}^{h} \|
			+ C h^{-\frac{1}{2}} \| \widetilde{\eta}_{n} \|_{1} \| e_{n,\beta}^{u} \|_{1} 
			\| \widehat{k}_{n}^{-1} \phi_{n,\alpha}^{h} \|.
		\end{align*}
		Thus 
		\begin{align}
			\label{eq:non-linear-H1-bound}
			&b \big( \widetilde{u}_{n}^{h}, u_{n,\beta}^{h}, \widehat{k}_{n}^{-1} \phi_{n,\alpha}^{h} \big)
			- b \big( \widetilde{u}_{n}, u_{n,\beta}, \widehat{k}_{n}^{-1} \phi_{n,\alpha}^{h} \big) \\
			\leq& C \big( \| u_{n,\beta} \|_{2} + h^{-1/2} \| e_{n,\beta}^{u} \|_{1} \big) 
			\| \widetilde{\phi}_{n}^{h} \|_{1} \| \widehat{k}_{n}^{-1} \phi_{n,\alpha}^{h} \| 
			+ C  \| \widetilde{\eta}_{n} \|_{1} \| u_{n,\beta} \|_{2} 
			\| \widehat{k}_{n}^{-1} \phi_{n,\alpha}^{h} \|  \notag \\
			&+ C \| \widetilde{u}_{n} \|_{2} \| e_{n,\beta}^{u} \|_{1} 
			\| \widehat{k}_{n}^{-1} \phi_{n,\alpha}^{h} \| 
			+ C h^{-1/2} \| \widetilde{\eta}_{n} \|_{1} \| e_{n,\beta}^{u} \|_{1} 
			\| \widehat{k}_{n}^{-1} \phi_{n,\alpha}^{h} \|.     \notag 
		\end{align}
		By Cauchy-Schwarz inequality, Young's inequality, Poincar$\acute{\rm{e}}$ inequality, \eqref{eq:approx-thm}, \eqref{eq:Stoke-Approx} and \eqref{eq:DLN-Consistency1}
		\begin{align}
			\label{eq:non-linear-H1-term1}
			& C \big( \| u_{n,\beta} \|_{2} + h^{-1/2} \| e_{n,\beta}^{u} \|_{1} \big) 
			\| \widetilde{\phi}_{n}^{h} \|_{1} \| \widehat{k}_{n}^{-1} \phi_{n,\alpha}^{h} \| \\
			\leq & C(\theta) \big( \| u_{n,\beta} \|_{2}+ h^{-1/2} \| e_{n,\beta}^{u} \|_{1} \big)
			\big( \| \nabla \phi_{n}^{h} \| + \| \nabla \phi_{n-1}^{h} \| \big)
			+ \frac{1}{64} \| \widehat{k}_{n}^{-1} \phi_{n,\alpha}^{h} \|^{2} \notag \\
			\leq & C(\theta) \big( \| u_{n,\beta} \|_{2}^{2} + h^{-1}\| e_{n,\beta} \|_{1}^{2} \big)
			\big( \| \nabla \phi_{n}^{h} \|^{2} + \| \nabla \phi_{n-1}^{h} \|^{2} \big)
			+ \frac{1}{64} \| \widehat{k}_{n}^{-1} \phi_{n,\alpha}^{h} \|^{2}, \notag
		\end{align}
		\begin{align}
			\label{eq:non-linear-H1-term2}
			&C  \| \widetilde{\eta}_{n} \|_{1} \| u_{n,\beta} \|_{2}  
			\| \widehat{k}_{n}^{-1} \phi_{n,\alpha}^{h} \|  \\
			\leq& C(\theta) \| |u| \|_{\infty,2}^{2} \big( \frac{h^{2s+2}}{\nu^{2}} 
			\| \widetilde{p}_{n} \|_{s+1}^{2} + h^{2r} \| \widetilde{u}_{n} \|_{r+1}^{2} \big) 
			+ \frac{1}{64} \| \widehat{k}_{n}^{-1} \phi_{n,\alpha}^{h} \|^{2} \notag \\
			\leq& C(\theta) \| |u| \|_{\infty,2}^{2} \Big[ \frac{h^{2s+2}}{\nu^{2}} 
			\big( \| \widetilde{p}_{n} - p(t_{n,\beta}) \|_{s+1}^{2} + \| p(t_{n,\beta})\|_{s+1}^{2} \big) \notag \\
			&\qquad \qquad \qquad + h^{2r} \big( \| \widetilde{u}_{n} - u(t_{n,\beta}) \|_{r+1}^{2} + \| u(t_{n,\beta}) \|_{r+1}^{2} \big) \Big]  
			+ \frac{1}{64} \| \widehat{k}_{n}^{-1} \phi_{n,\alpha}^{h} \|^{2}  \notag \\ 
			\leq& C(\theta) \| |u| \|_{\infty,2}^{2} \Big[ \frac{h^{2s\!+\!2}}{\nu^{2}} 
			\Big( C(\theta) k_{\rm{max}}^{3} \int_{t_{n-1}}^{t_{n+1}} \| p_{tt} \|_{s\!+\!1}^{2} dt 
			+ \| p(t_{n,\beta})\|_{s\!+\!1}^{2}  \Big) \notag \\
			&\qquad \qquad \qquad + h^{2r} \Big( C(\theta) k_{\rm{max}}^{3} \int_{t_{n\!-\!1}}^{t_{n\!+\!1}} \| u_{tt} \|_{r\!+\!1}^{2} dt + \| u(t_{n,\beta})\|_{r\!+\!1}^{2} \Big) \Big]  
			+ \frac{1}{64} \| \widehat{k}_{n}^{-1} \phi_{n,\alpha}^{h} \|^{2}, \notag 
		\end{align}
		\begin{align}
			\label{eq:non-linear-H1-term3}
			C \| \widetilde{u}_{n} \|_{2} \| e_{n,\beta}^{u} \|_{1} 
			\| \widehat{k}_{n}^{-1} \phi_{n,\alpha}^{h} \|
			\leq& C \| \widetilde{u}_{n} \|_{2}^{2} \| e_{n,\beta}^{u} \|_{1}^{2}
			+ \frac{1}{64} \| \widehat{k}_{n}^{-1} \phi_{n,\alpha}^{h} \|^{2} \\
			\leq& C \| |u| \|_{\infty,2}^{2} \| e_{n,\beta}^{u} \|_{1}^{2}
			+ \frac{1}{64} \| \widehat{k}_{n}^{-1} \phi_{n,\alpha}^{h} \|^{2}, \notag 
		\end{align}
		\begin{align}
			\label{eq:non-linear-H1-term4}
			& C h^{-1/2} \| \widetilde{\eta}_{n} \|_{1} \| e_{n,\beta}^{u} \|_{1} 
			\| \widehat{k}_{n}^{-1} \phi_{n,\alpha}^{h} \|  \\
			\leq& C h^{-1} \| \widetilde{\eta}_{n} \|_{1}^{2} \| e_{n,\beta}^{u} \|_{1}^{2}
			+ \frac{1}{64} \| \widehat{k}_{n}^{-1} \phi_{n,\alpha}^{h} \|^{2}  \notag \\
			\leq& C \big( \frac{h^{2s+1}}{\nu^{2}} \| |p| \|_{\infty,s+1}^{2} 
			+ h^{2r-1} \| |u| \|_{\infty,r+1}^{2} \big) \| e_{n,\beta}^{u} \|_{1}^{2}
			+ \frac{1}{64} \| \widehat{k}_{n}^{-1} \phi_{n,\alpha}^{h} \|^{2}. \notag 
		\end{align}
		\begin{confidential}
			\color{darkblue}
			\begin{align*}
				& C \big( \| \widetilde{\phi}_{n}^{h} \|_{1} \| u_{n,\beta} \|_{2}
				+ h^{-1/2} \| \widetilde{\phi}_{n}^{h} \|_{1} \| e_{n,\beta} \|_{1} \big) 
				\| \widehat{k}_{n}^{-1} \phi_{n,\alpha}^{h} \| \\
				\leq & C \big( \| \widetilde{\phi}_{n}^{h} \|_{1}^{2} \| u_{n,\beta} \|_{2}^{2}
				+ h^{-1} \| \widetilde{\phi}_{n}^{h} \|_{1}^{2} \| e_{n,\beta} \|_{1}^{2} \big)
				+ \frac{1}{64} \| \widehat{k}_{n}^{-1} \phi_{n,\alpha}^{h} \|^{2} \\
				\leq & C \big( \| \nabla \widetilde{\phi}_{n}^{h} \|^{2} \| u_{n,\beta} \|_{2}^{2}
				+ h^{-1} \| \nabla \widetilde{\phi}_{n}^{h} \|^{2} \| e_{n,\beta} \|_{1}^{2} \big)
				+ \frac{1}{64} \| \widehat{k}_{n}^{-1} \phi_{n,\alpha}^{h} \|^{2} \\
				\leq & C(\theta) \big( \| u_{n,\beta} \|_{2}^{2} + h^{-1}\| e_{n,\beta} \|_{1}^{2} \big)
				\big( \| \nabla \phi_{n}^{h} \|^{2} + \| \nabla \phi_{n-1}^{h} \|^{2} \big)
				+ \frac{1}{64} \| \widehat{k}_{n}^{-1} \phi_{n,\alpha}^{h} \|^{2}.
			\end{align*}
			\begin{align*}
				&C  \| \widetilde{\eta}_{n} \|_{1} \| u_{n,\beta} \|_{2} 
				\| \widehat{k}_{n}^{-1} \phi_{n,\alpha}^{h} \|  
				\leq C  \| \widetilde{\eta}_{n} \|_{1}^{2} \| u_{n,\beta} \|_{2}^{2}
				+ \frac{1}{64} \| \widehat{k}_{n}^{-1} \phi_{n,\alpha}^{h} \|^{2} \\
				\leq& C(\theta) \| |u| \|_{\infty,2}^{2} \big( \frac{h^{2s+2}}{\nu^{2}} 
				\| \widetilde{p}_{n} \|_{s+1}^{2} + h^{2r} \| \widetilde{u}_{n} \|_{r+1}^{2} \big) 
				+ \frac{1}{64} \| \widehat{k}_{n}^{-1} \phi_{n,\alpha}^{h} \|^{2} \\
				\leq& C(\theta) \| |u| \|_{\infty,2}^{2} \Big[ \frac{h^{2s+2}}{\nu^{2}} 
				\big( \| \widetilde{p}_{n} - p(t_{n,\beta}) \|_{s+1}^{2} + \| p(t_{n,\beta})\|_{s+1}^{2} \big) \\
				&+ h^{2r} \big( \| \widetilde{u}_{n} - u(t_{n,\beta}) \|_{r+1}^{2} + \| u(t_{n,\beta}) \|_{r+1}^{2} \big) \Big]  
				+ \frac{1}{64} \| \widehat{k}_{n}^{-1} \phi_{n,\alpha}^{h} \|^{2} \\
				\leq& C(\theta) \| |u| \|_{\infty,2}^{2} \Big[ \frac{h^{2s+2}}{\nu^{2}} 
				\Big( C(\theta) k_{\rm{max}}^{3} \int_{t_{n-1}}^{t_{n+1}} \| p_{tt} \|_{s+1}^{2} dt 
				+ \| p(t_{n,\beta})\|_{s+1}^{2}  \Big) \\
				&+ h^{2r} \big( C(\theta) k_{\rm{max}}^{3} \int_{t_{n-1}}^{t_{n+1}} \| u_{tt} \|_{r+1}^{2} dt + \| u(t_{n,\beta})\|_{r+1}^{2} \big) \Big]  
				+ \frac{1}{64} \| \widehat{k}_{n}^{-1} \phi_{n,\alpha}^{h} \|^{2}
			\end{align*}
			\begin{align*}
				C \| \widetilde{u}_{n} \|_{2} \| e_{n,\beta} \|_{1} 
				\| \widehat{k}_{n}^{-1} \phi_{n,\alpha}^{h} \|
				\leq& C \| \widetilde{u}_{n} \|_{2}^{2} \| e_{n,\beta} \|_{1}^{2}
				+ \frac{1}{64} \| \widehat{k}_{n}^{-1} \phi_{n,\alpha}^{h} \|^{2} \\
				\leq& C \| |u| \|_{\infty,2}^{2} \| e_{n,\beta} \|_{1}^{2}
				+ \frac{1}{64} \| \widehat{k}_{n}^{-1} \phi_{n,\alpha}^{h} \|^{2}.
			\end{align*}
			\begin{align*}
				C h^{-1/2} \| \widetilde{\eta}_{n} \|_{1} \| e_{n,\beta} \|_{1} 
				\| \widehat{k}_{n}^{-1} \phi_{n,\alpha}^{h} \| 
				\leq& C h^{-1} \| \widetilde{\eta}_{n} \|_{1}^{2} \| e_{n,\beta} \|_{1}^{2}
				+ \frac{1}{64} \| \widehat{k}_{n}^{-1} \phi_{n,\alpha}^{h} \|^{2}  \\
				\leq& C h^{-1} \big( \frac{h^{2s+2}}{\nu^{2}} \| \widetilde{p}_{n} \|_{s+1}^{2} 
				+ h^{2r} \| \widetilde{u}_{n} \|_{r+1}^{2} \big) \| e_{n,\beta} \|_{1}^{2}
				+ \frac{1}{64} \| \widehat{k}_{n}^{-1} \phi_{n,\alpha}^{h} \|^{2} \\
				\leq& C \big( \frac{h^{2s+1}}{\nu^{2}} \| |p| \|_{\infty,s+1}^{2} 
				+ h^{2r-1} \| |u| \|_{\infty,r+1}^{2} \big) \| e_{n,\beta} \|_{1}^{2}
				+ \frac{1}{64} \| \widehat{k}_{n}^{-1} \phi_{n,\alpha}^{h} \|^{2}.
			\end{align*}
			\normalcolor
		\end{confidential}
		By \eqref{eq:non-linear-H1-term1}, \eqref{eq:non-linear-H1-term2}, 
		\eqref{eq:non-linear-H1-term3} and \eqref{eq:non-linear-H1-term4}, \eqref{eq:non-linear-H1-bound} becomes 
		\begin{align}
			\label{eq:non-linear-H1-bound2}
			&b \big( \widetilde{u}_{n}^{h}, u_{n,\beta}^{h}, \widehat{k}_{n}^{-1} \phi_{n,\alpha}^{h} \big)
			- b \big( \widetilde{u}_{n}, u_{n,\beta}, \widehat{k}_{n}^{-1} \phi_{n,\alpha}^{h} \big)  \\
			\leq& C(\theta) \big( \| u_{n,\beta} \|_{2}^{2} + h^{-1}\| e_{n,\beta}^{u} \|_{1}^{2} \big)
			\big( \| \nabla \phi_{n}^{h} \|^{2} + \| \nabla \phi_{n-1}^{h} \|^{2} \big) 
			+ C \| |u| \|_{\infty,2}^{2} \| e_{n,\beta}^{u} \|_{1}^{2} \notag \\
			&+ C \big( \frac{h^{2s+1}}{\nu^{2}} \| |p| \|_{\infty,s+1}^{2} 
			+ h^{2r-1} \| |u| \|_{\infty,r+1}^{2} \big) \| e_{n,\beta} \|_{1}^{2}
			+ \frac{1}{16} \| \widehat{k}_{n}^{-1} \phi_{n,\alpha}^{h} \|^{2} \notag \\
			&+ C(\theta) \| |u| \|_{\infty,2}^{2} \Big[ \frac{h^{2s\!+\!2}}{\nu^{2}} 
			\Big( k_{\rm{max}}^{3} \int_{t_{n-1}}^{t_{n+1}} \| p_{tt} \|_{s\!+\!1}^{2} dt 
			+ \| p(t_{n,\beta})\|_{s\!+\!1}^{2}  \Big) \notag \\
			& \qquad \qquad \qquad \qquad \qquad \qquad 
			+ h^{2r} \Big( k_{\rm{max}}^{3} \int_{t_{n\!-\!1}}^{t_{n\!+\!1}} \| u_{tt} \|_{r\!+\!1}^{2} dt + \| u(t_{n,\beta})\|_{r\!+\!1}^{2} \Big) \Big]. 
			\notag 
		\end{align}
		We use integration by parts and \eqref{eq:DLN-Consistency1} in Lemma \ref{lemma:DLN-consistency}
		\begin{confidential}
			\color{darkblue}
			\begin{align*}
				\big(p_{n,\beta} - p(t_{n,\beta}), \nabla \cdot \widehat{k}_{n}^{-1} \phi_{n,\alpha}^{h} \big) 
				=& - \big(\nabla (p_{n,\beta} - p(t_{n,\beta})), \widehat{k}_{n}^{-1} \phi_{n,\alpha}^{h} \big) \\
				\leq & C \big\| \nabla (p_{n,\beta} - p(t_{n,\beta})) \big\|^{2}
				+ \frac{1}{16} \| \widehat{k}_{n}^{-1} \phi_{n,\alpha}^{h} \|^{2} \\
				\leq & C(\theta) (k_{n} + k_{n-1})^3 \int_{t_{n-1}}^{t_{n+1}} \| \nabla p_{tt} \|^{2} dt 
				+ \frac{1}{16} \| \widehat{k}_{n}^{-1} \phi_{n,\alpha}^{h} \|^{2}.
			\end{align*}
			\normalcolor
		\end{confidential}
		\begin{align}
			\label{eq:error-H1-pressure}
			\big(p_{n,\beta} - p(t_{n,\beta}), \nabla \cdot \widehat{k}_{n}^{-1} \phi_{n,\alpha}^{h} \big)
			\leq& C \big\| \nabla (p_{n,\beta} - p(t_{n,\beta})) \big\|^{2}
			+ \frac{1}{16} \| \widehat{k}_{n}^{-1} \phi_{n,\alpha}^{h} \|^{2} \\
			\leq& C(\theta) k_{\rm{max}}^3 \int_{t_{n-1}}^{t_{n+1}} \| \nabla p_{tt} \|^{2} dt 
			+ \frac{1}{16} \| \widehat{k}_{n}^{-1} \phi_{n,\alpha}^{h} \|^{2}.   \notag 
		\end{align}
		We combine \eqref{eq:error-H1-diff-eta-phi}, \eqref{eq:tau-H1-term1}, \eqref{eq:tau-H1-term2}, 
		\eqref{eq:tau-H1-term5}, \eqref{eq:tau-nonlinear-H1}, \eqref{eq:non-linear-H1-bound2}, 
		\eqref{eq:error-H1-pressure} and sum \eqref{eq:error-H1-1-DLN} over $n$ from $1$ to $N-1$
		\begin{confidential}
			\color{darkblue}
			\begin{align*}
				&\nu \Big(\begin{Vmatrix}
					\nabla {\phi_{n+1}^{h}} \\
					\nabla {\phi_{n}^{h}}
				\end{Vmatrix}
				_{G(\theta)}^{2} -
				\begin{Vmatrix}
					\nabla {\phi_{n}^{h}} \\
					\nabla {\phi_{n-1}^{h}}
				\end{Vmatrix}%
				_{G(\theta)}^{2} 
				+ \Big\| \nabla \big(\sum_{\ell=0}^{2} \gamma_{\ell}^{(n)} \phi_{n\!-\!1\!+\!\ell}^{h} \big) \Big\| ^{2} \Big) 
				+ \frac{\widehat{k}_{n}}{2} \Big\| \widehat{k}_{n}^{-1} \phi_{n,\alpha}^{h} \Big\|^{2} \\
				\leq& C(\theta) \big( \widehat{k}_{n} \| u_{n,\beta} \|_{2}^{2} 
				+ \frac{1}{h \nu} \nu \widehat{k}_{n} \| e_{n,\beta} \|_{1}^{2} \big)
				\big( \| \nabla \phi_{n}^{h} \|^{2} + \| \nabla \phi_{n-1}^{h} \|^{2} \big) 
				+ \frac{C}{\nu} \| |u| \|_{\infty,2}^{2} \nu \widehat{k}_{n} \| e_{n,\beta} \|_{1}^{2} \\
				+& C \big( \frac{h^{2s+1}}{\nu^{3}} \| |p| \|_{\infty,s+1}^{2} 
				+ \frac{h^{2r-1}}{\nu} \| |u| \|_{\infty,r+1}^{2} \big) \nu \widehat{k}_{n}\| e_{n,\beta} \|_{1}^{2}  \\
				+& C(\theta) \| |u| \|_{\infty,2}^{2} \Big[ \frac{h^{2s\!+\!2}}{\nu^{2}} 
				\Big( k_{\rm{max}}^{4} \int_{t_{n-1}}^{t_{n+1}} \| p_{tt} \|_{s\!+\!1}^{2} dt 
				+ (k_{n} + k_{n-1}) \| p(t_{n,\beta})\|_{s\!+\!1}^{2}  \Big) \notag \\
				& \qquad \qquad \qquad \qquad  
				+ h^{2r} \Big( k_{\rm{max}}^{4} \int_{t_{n\!-\!1}}^{t_{n\!+\!1}} \| u_{tt} \|_{r\!+\!1}^{2} dt + (k_{n} + k_{n-1}) \| u(t_{n,\beta})\|_{r\!+\!1}^{2} \Big) \Big] \\
				+& C(\theta) \Big( h^{2r+2} 
				\int_{t_{n-1}}^{t_{n+1}} \| u_{t} \|_{r+1}^{2} dt 
				+ \frac{h^{2s+4}}{\nu^{2}} \int_{t_{n-1}}^{t_{n+1}} \| p_{t} \|_{s+1}^{2} dt \Big)
				+ C(\theta) k_{\rm{max}}^{4} \int_{t_{n-1}}^{t_{n+1}} \| u_{ttt} \|^{2} dt \\
				+& C(\theta) \nu^{2} k_{\rm{max}}^{4} \int_{t_{n-1}}^{t_{n+1}} \| u_{tt} \|_{2}^{2} dt
				+ C(\theta) k_{\rm{max}}^{4} \int_{t_{n-1}}^{t_{n+1}} \| f_{tt} \|^{2} dt
				+ C(\theta) k_{\rm{max}}^{4} \| |u| \|_{\infty,2}^{2} 
				\int_{t_{n-1}}^{t_{n+1}} \| u_{tt} \|_{1}^{2} dt \\
				+& C(\theta) k_{\rm{max}}^{4} \int_{t_{n-1}}^{t_{n+1}} \| \nabla p_{tt} \|^{2} dt.
			\end{align*}
			\begin{align*}
				&\begin{Vmatrix}
					\nabla {\phi_{N}^{h}} \\
					\nabla {\phi_{N-1}^{h}}
				\end{Vmatrix}
				_{G(\theta)}^{2} -
				\begin{Vmatrix}
					\nabla {\phi_{1}^{h}} \\
					\nabla {\phi_{0}^{h}}
				\end{Vmatrix}%
				_{G(\theta)}^{2} 
				+ \sum_{n=1}^{N-1} \Big\| \nabla \big(\sum_{\ell=0}^{2} \gamma_{\ell}^{(n)} \phi_{n\!-\!1\!+\!\ell}^{h} \big) \Big\| ^{2} 
				+ \sum_{n=1}^{N-1} \frac{\widehat{k}_{n}}{2\nu} \Big\| \widehat{k}_{n}^{-1} \phi_{n,\alpha}^{h} \Big\|^{2} \\
				\leq& \frac{C(\theta)}{\nu} \sum_{n=1}^{N-1}\big( \widehat{k}_{n} \| u_{n,\beta} \|_{2}^{2} 
				+ \frac{1}{h \nu} \nu \widehat{k}_{n} \| e_{n,\beta} \|_{1}^{2} \big)
				\big( \| \nabla \phi_{n}^{h} \|^{2} + \| \nabla \phi_{n-1}^{h} \|^{2} \big)
				+ \frac{C}{\nu^{2}} \| |u| \|_{\infty,2}^{2} \sum_{n=1}^{N-1} 
				\nu \widehat{k}_{n} \| e_{n,\beta} \|_{1}^{2} \\
				+& C \big( \frac{h^{2s+1}}{\nu^{4}} \| |p| \|_{\infty,s+1}^{2} 
				+ \frac{h^{2r-1}}{\nu^{2}} \| |u| \|_{\infty,r+1}^{2} \big) 
				\sum_{n=1}^{N-1} \nu \widehat{k}_{n}\| e_{n,\beta} \|_{1}^{2}  \\
				+& C(\theta) \| |u| \|_{\infty,2}^{2} \Big[ \frac{h^{2s\!+\!2}}{\nu^{3}} 
				\Big( k_{\rm{max}}^{4} \sum_{n=1}^{N-1} \int_{t_{n-1}}^{t_{n+1}} \| p_{tt} \|_{s\!+\!1}^{2} dt 
				+ \sum_{n=1}^{N-1} (k_{n} + k_{n-1}) \| p(t_{n,\beta})\|_{s\!+\!1}^{2}  \Big) \notag \\
				& \qquad \qquad \qquad \qquad  
				+ \frac{h^{2r}}{\nu} \Big( k_{\rm{max}}^{4} \sum_{n=1}^{N-1} \int_{t_{n\!-\!1}}^{t_{n\!+\!1}} \| u_{tt} \|_{r\!+\!1}^{2} dt + \sum_{n=1}^{N-1} (k_{n} + k_{n-1}) \| u(t_{n,\beta})\|_{r\!+\!1}^{2} \Big) \Big] \\ 
				+& C(\theta) \Big( \frac{h^{2r+2}}{\nu} \sum_{n=1}^{N-1} 
				\int_{t_{n-1}}^{t_{n+1}} \| u_{t} \|_{r+1}^{2} dt 
				+ \frac{h^{2s+4}}{\nu^{3}} 
				\sum_{n=1}^{N-1}\int_{t_{n-1}}^{t_{n+1}} \| p_{t} \|_{s+1}^{2} dt \Big)
				+ \frac{C(\theta)}{\nu} k_{\rm{max}}^{4} \sum_{n=1}^{N-1} 
				\int_{t_{n-1}}^{t_{n+1}} \| u_{ttt} \|^{2} dt \\
				+& C(\theta) \nu k_{\rm{max}}^{4} \sum_{n=1}^{N-1} 
				\int_{t_{n-1}}^{t_{n+1}} \| u_{tt} \|_{2}^{2} dt
				+ \frac{C(\theta)}{\nu} k_{\rm{max}}^{4} \sum_{n=1}^{N-1} \int_{t_{n-1}}^{t_{n+1}} \| f_{tt} \|^{2} dt
				+ \frac{C(\theta)}{\nu} k_{\rm{max}}^{4} \| |u| \|_{\infty,2}^{2} 
				\sum_{n=1}^{N-1} \int_{t_{n-1}}^{t_{n+1}} \| u_{tt} \|_{1}^{2} dt \\
				+& \frac{C(\theta)}{\nu} k_{\rm{max}}^{4} 
				\sum_{n=1}^{N-1} \int_{t_{n-1}}^{t_{n+1}} \| \nabla p_{tt} \|^{2} dt.
			\end{align*}
			\normalcolor
		\end{confidential}
		\begin{align}
			\label{eq:phi-estimator-H1}
			&\| \nabla \phi_{N}^{h} \|^{2} 
			+ \frac{C(\theta)}{\nu} \sum_{n=1}^{N-1} \widehat{k}_{n} \Big\| \widehat{k}_{n}^{-1} \phi_{n,\alpha}^{h} \Big\|^{2} \\
			\leq & \frac{C(\theta)}{\nu} \sum_{n=1}^{N-1}\big( \widehat{k}_{n} \| u_{n,\beta} \|_{2}^{2} 
			+ \frac{1}{h \nu} \nu \widehat{k}_{n} \| e_{n,\beta} \|_{1}^{2} \big)
			\big( \| \nabla \phi_{n}^{h} \|^{2} + \| \nabla \phi_{n-1}^{h} \|^{2} \big) \notag \\
			+& \frac{C(\theta)}{\nu^{2}} \| |u| \|_{\infty,2}^{2} \sum_{n=1}^{N-1} 
			\nu \widehat{k}_{n} \| e_{n,\beta} \|_{1}^{2}                  \notag \\
			+& C(\theta) \big( \frac{h^{2s+1}}{\nu^{4}} \| |p| \|_{\infty,s+1}^{2} 
			+ \frac{h^{2r-1}}{\nu^{2}} \| |u| \|_{\infty,r+1}^{2} \big) 
			\sum_{n=1}^{N-1} \nu \widehat{k}_{n}\| e_{n,\beta} \|_{1}^{2}  \notag \\
			+& \!C(\!\theta\!) \!\| |u| \|_{\infty\!,\!2}^{2} \Big[\! \frac{h^{2s\!+\!2}}{\nu^{3}} 
			\Big(\! k_{\rm{max}}^{4} \| p_{tt} \|_{2\!,\!s\!+\!1}^{2}  
			\!+\! \| |p| \|_{2\!,\!s\!+\!1\!,\!\beta}^{2} \! \Big) 
			\!+\! \frac{h^{2r}}{\nu} \Big(\! k_{\rm{max}}^{4} \| u_{tt} \|_{2\!,\!r\!+\!1}^{2}
			\!+\! \| |u| \|_{2\!,\!r\!+\!1\!,\!\beta}^{2} \!\Big) \!\Big] \notag \\ 
			+& C(\theta) \Big( \frac{h^{2r+2}}{\nu} \| u_{t} \|_{2,r+1}^{2}
			+ \frac{h^{2s+4}}{\nu^{3}}  \| p_{t} \|_{2,s+1}^{2}  \Big)
			+ \frac{C(\theta)}{\nu} k_{\rm{max}}^{4} \| u_{ttt} \|_{2,0}^{2} \notag \\
			+& C(\theta) \nu k_{\rm{max}}^{4} \| u_{tt} \|_{2,2}^{2} 
			+ \frac{C(\theta)}{\nu} k_{\rm{max}}^{4} \| f_{tt} \|_{2,0}^{2} 
			+ \frac{C(\theta)}{\nu} k_{\rm{max}}^{4} \| |u| \|_{\infty,2}^{2} 
			\| u_{tt} \|_{2,1}^{2} \notag \\
			+& \frac{C(\theta)}{\nu} k_{\rm{max}}^{4} \| \nabla p_{tt} \|_{2,0}^{2}
			+ C(\theta) \big( \| \nabla \phi_{1}^{h} \|^{2} + \| \nabla \phi_{0}^{h} \|^{2} \big). \notag  
		\end{align}
		Since 
		\begin{align*}
			\sum_{n=1}^{N-1} \nu \widehat{k}_{n} \| e_{n,\beta}^{u} \|_{1}^{2}
			=& \nu \sum_{n=1}^{N-1} \widehat{k}_{n} \| e_{n,\beta}^{u} \|^{2}
			+ \nu \sum_{n=1}^{N-1} \widehat{k}_{n} \| \nabla e_{n,\beta}^{u} \|^{2} \\
			\leq& C(\theta) \nu T \max_{0 \leq n \leq M} \| e_{n}^{u} \|^{2} 
			+ \nu \sum_{n=1}^{N-1} \widehat{k}_{n} \| \nabla e_{n,\beta}^{u} \|^{2},
		\end{align*}
		we use \eqref{eq:error-L2-inf-final} in the proof of Theorem \ref{thm:error-velocity-L2} to obtain
		\begin{align}
			\label{eq:error-H1-beta}
			\sum_{n=1}^{N-1} 
			\nu \widehat{k}_{n} \| e_{n,\beta}^{u} \|_{1}^{2}
			\leq& \big( C(\theta) \nu T + 1 \big)
				F_{2}^{2},
		\end{align}
		where
		\begin{align*}
			F_{2} 
			=& \exp \Big[ \frac{C(\theta)}{\nu} \big( k_{\rm{max}}^{4} \| u_{tt} \|_{2,2}^{2} 
			+ \| |u| \|_{2,2,\beta}^{2} \big) \Big] 
			\sqrt{F_{1}} + Ch^{r} \| |u| \|_{\infty,r} 
			\notag \\
			& + C(\theta) \sqrt{\nu} h^{r} \Big( k_{\rm{max}}^{2} \| u_{tt} \|_{2,r+1}
			+ \| |u| \|_{2,r+1,\beta} \Big),
		\end{align*}
		and $F_{1}$ is in \eqref{eq:def_F1}.
		We apply \eqref{eq:error-H1-beta} and discrete Gronwall inequality to \eqref{eq:phi-estimator-H1} 
		\begin{confidential}
			\color{darkblue}
			\begin{align*}
				&\| \nabla \phi_{N}^{h} \|^{2}   
				+ \frac{C(\theta)}{\nu} \sum_{n=1}^{N-1} \widehat{k}_{n} 
				\Big\| \widehat{k}_{n}^{-1} \phi_{n,\alpha}^{h} \Big\|^{2} \\
				\leq & \frac{C(\theta)}{\nu} 
				\Big[ \big( \widehat{k}_{N-1} \| u_{N-1,\beta} \|_{2}^{2} 
				+ \frac{1}{h \nu} \nu \widehat{k}_{N-1} \| e_{N-1,\beta}^{u} \|_{1}^{2}  \big) 
				\| \nabla \phi_{N-1}^{h} \|^{2} \\
				+& \sum_{n=2}^{N-2}  \big(\widehat{k}_{n+1} \| u_{n+1,\beta} \|_{2}^{2} 
				+ \widehat{k}_{n} \| u_{n,\beta} \|_{2}^{2} 
				+ \frac{1}{h \nu} \nu \widehat{k}_{n+1} \| e_{n+1,\beta}^{u} \|_{1}^{2} 
				+ \frac{1}{h \nu} \nu \widehat{k}_{n} \| e_{n,\beta}^{u} \|_{1}^{2} \big) 
				\| \nabla \phi_{n}^{h} \|^{2} \\
				+& \big( \widehat{k}_{1} \| u_{1,\beta} \|_{2}^{2} 
				+ \frac{1}{h \nu} \nu \widehat{k}_{1} \| e_{1,\beta}^{u} \|_{1}^{2}  \big) 
				\| \nabla \phi_{0}^{h} \|^{2} \Big] 
				+ \frac{C(\theta)(1 + \nu T)}{\nu^{2}} \| |u| \|_{\infty,2}^{2} 
				F_{2}^{2} 	\\
				+& C(\theta)(1 + \nu T) \big( \frac{h^{2s+1}}{\nu^{4}} \| |p| \|_{\infty,s+1}^{2} 
				+ \frac{h^{2r-1}}{\nu^{2}} \| |u| \|_{\infty,r+1}^{2} \big) 
				F_{2}^{2} \\
				+& \!C(\!\theta\!) \!\| |u| \|_{\infty\!,\!2}^{2} \Big[\! \frac{h^{2s\!+\!2}}{\nu^{3}} 
				\Big(\! k_{\rm{max}}^{4} \| p_{tt} \|_{2\!,\!s\!+\!1}^{2}  
				\!+\! \| |p| \|_{2\!,\!s\!+\!1\!,\!\beta}^{2} \! \Big) 
				\!+\! \frac{h^{2r}}{\nu} \Big(\! k_{\rm{max}}^{4} \| u_{tt} \|_{2\!,\!r\!+\!1}^{2}
				\!+\! \| |u| \|_{2\!,\!r\!+\!1\!,\!\beta}^{2} \!\Big) \!\Big] \notag \\ 
				+& C(\theta) \Big( \frac{h^{2r+2}}{\nu} \| u_{t} \|_{2,r+1}^{2}
				+ \frac{h^{2s+4}}{\nu^{3}}  \| p_{t} \|_{2,s+1}^{2}  \Big)
				+ \frac{C(\theta)}{\nu} k_{\rm{max}}^{4} \| u_{ttt} \|_{2,0}^{2} \notag \\
				+& C(\theta) \nu k_{\rm{max}}^{4} \| u_{tt} \|_{2,2}^{2} 
				+ \frac{C(\theta)}{\nu} k_{\rm{max}}^{4} \| f_{tt} \|_{2,0}^{2} 
				+ \frac{C(\theta)}{\nu} k_{\rm{max}}^{4} \| |u| \|_{\infty,2}^{2} 
				\| u_{tt} \|_{2,1}^{2} \notag \\
				+& \frac{C(\theta)}{\nu} k_{\rm{max}}^{4} \| \nabla p_{tt} \|_{2,0}^{2} 
				+ C(\theta) \big( \| \nabla \phi_{1}^{h} \|^{2} + \| \nabla \phi_{0}^{h} \|^{2} \big).
			\end{align*}
			\normalcolor
		\end{confidential}
		\begin{align}
			\label{eq:phi-H1-diff-L2}
			&\| \nabla \phi_{N}^{h} \|^{2}  
			+ \frac{C(\theta)}{\nu} \sum_{n=1}^{N-1} \widehat{k}_{n} 
			\Big\| \widehat{k}_{n}^{-1} \phi_{n,\alpha}^{h} \Big\|^{2}        \\
			\leq&  \exp \Big[ \frac{C(\theta)}{\nu}  
			\big( k_{\rm{max}}^{4} \| u_{tt} \|_{2,2}^{2} 
			+ \| |u|\|_{2,2,\beta}^{2} + \frac{ C(\theta) \nu T + 1 }{h \nu} 
			F_{2}^{2} 	\big)   \Big] F_{3},               \notag 
		\end{align}
		where
		\begin{align*}
			F_{3} 
			&= \frac{C(\theta)(1 \!+\! \nu T)}{\nu^{2}} \| |u| \|_{\infty,2}^{2} 
			F_{2}^{2} 	
			\!+\! C(\theta)(1 \!+\! \nu T) \big( \frac{h^{2s\!+\!1}}{\nu^{4}} \| |p| \|_{\infty,s\!+\!1}^{2} 
			+ \frac{h^{2r\!-\!1}}{\nu^{2}} \| |u| \|_{\infty,r\!+\!1}^{2} \big) 
			F_{2}^{2} \\
			+& \!C(\!\theta\!) \!\| |u| \|_{\infty\!,\!2}^{2} \Big[\! \frac{h^{2s\!+\!2}}{\nu^{3}} 
			\Big(\! k_{\rm{max}}^{4} \| p_{tt} \|_{2\!,\!s\!+\!1}^{2}  
			\!+\! \| |p| \|_{2\!,\!s\!+\!1\!,\!\beta}^{2} \! \Big) 
			\!+\! \frac{h^{2r}}{\nu} \Big(\! k_{\rm{max}}^{4} \| u_{tt} \|_{2\!,\!r\!+\!1}^{2}
			\!+\! \| |u| \|_{2\!,\!r\!+\!1\!,\!\beta}^{2} \!\Big) \!\Big] \notag \\ 
			+& C(\theta) \Big( \frac{h^{2r+2}}{\nu} \| u_{t} \|_{2,r+1}^{2}
			+ \frac{h^{2s+4}}{\nu^{3}}  \| p_{t} \|_{2,s+1}^{2}  \Big)
			+ \frac{C(\theta)}{\nu} k_{\rm{max}}^{4} \| u_{ttt} \|_{2,0}^{2} \notag \\
			+& C(\theta) \nu k_{\rm{max}}^{4} \| u_{tt} \|_{2,2}^{2} 
			+ \frac{C(\theta)}{\nu} k_{\rm{max}}^{4} \| f_{tt} \|_{2,0}^{2} 
			+ \frac{C(\theta)}{\nu} k_{\rm{max}}^{4} \| |u| \|_{\infty,2}^{2} 
			\| u_{tt} \|_{2,1}^{2} \notag \\
			+& \frac{C(\theta)}{\nu} k_{\rm{max}}^{4} \| \nabla p_{tt} \|_{2,0}^{2}
			+ C(\theta) \big( \| \nabla \phi_{1}^{h} \|^{2} + \| \nabla \phi_{0}^{h} \|^{2} \big).
		\end{align*}
		By the time-diameter condition in \eqref{eq:time-h-limit}, $h^{-1} F_{2}^{2}$ is bounded.
		Thus 
		\begin{align}
			\label{eq:error-grad-velocity}
			 \max_{0 \leq n \leq N} \| \nabla e_{n}^{u} \|  
			\leq& \max_{0 \leq n \leq N} \| \nabla \eta_{n} \| 
			+ \max_{0 \leq n \leq N} \| \nabla \phi_{n}^{h} \|   \\
			\leq& Ch^{r} \| |u| \|_{\infty,r+1} \!+\! \frac{Ch^{s+1}}{\nu} \| |p| \|_{\infty,s+1}  \notag \\
			+& \exp \Big[ \frac{C(\theta)}{\nu}  \big( k_{\rm{max}}^{4} \| u_{tt} \|_{2,2}^{2} 
			+ \| |u|\|_{2,2,\beta}^{2} + \frac{ C(\theta) \nu T + 1 }{h \nu} F_{2}^{2} 	\big)   \Big] \sqrt{F_{3}}.   \notag 
		\end{align}
		Combining Theorem \ref{thm:error-velocity-L2} and \eqref{eq:error-grad-velocity}, 
		we have \eqref{eq:error-H1-conclusion}. 
		By \eqref{eq:approx-thm}, \eqref{eq:Stoke-Approx} and H$\ddot{\rm{o}}$lder's inequality 
		\begin{confidential}
			\color{darkblue}
			\begin{align*}
				&\sum_{n=1}^{N-1} \frac{\widehat{k}_{n}}{\nu} 
				\Big\| \widehat{k}_{n}^{-1} \eta_{n,\alpha} \Big\|^{2}
				= \sum_{n=1}^{N-1} \frac{1}{\nu \widehat{k}_{n}} \| \eta_{n,\alpha} \|^{2} 
				\leq \sum_{n=1}^{N-1} \frac{C}{\nu \widehat{k}_{n}} 
				\Big( \frac{h^{2s+2}}{\nu^{2}} \| p_{n,\alpha} \|_{s+1}^{2} 
				+ h^{2r} \| u_{n,\alpha}\|_{r}^{2}  \Big) \\
				\leq& \sum_{n=1}^{N-1} \frac{C}{\nu \widehat{k}_{n}}
				\Big[ \frac{h^{2s+2}}{\nu^{2}} C(\theta)(k_{n} + k_{n-1}) \int_{t_{n-1}}^{t_{n+1}} \| p_{t} \|_{s+1}^{2} dt 
				+ h^{2r} C(\theta)(k_{n} + k_{n-1}) \int_{t_{n-1}}^{t_{n+1}} \| u_{t} \|_{r}^{2} dt  \Big] \\
				\leq& \sum_{n=1}^{N-1} \frac{C(\theta)}{\nu} \Big[ \frac{h^{2s+2}}{\nu^{2}} 
				\int_{t_{n-1}}^{t_{n+1}} \| p_{t} \|_{s+1}^{2} dt 
				+ h^{2r} \int_{t_{n-1}}^{t_{n+1}} \| u_{t} \|_{r}^{2} dt  \Big]
				= \frac{C(\theta)}{\nu} 
				\Big( \frac{h^{2s+2}}{\nu^{2}} \| p_{t} \|_{2,s+1}^{2} 
				+ h^{2r} \| u_{t} \|_{2,r}^{2}   \Big)
			\end{align*}
			\begin{align*}
				\sum_{n=1}^{N-1} \frac{\widehat{k}_{n}}{\nu} 
				\Big\| \widehat{k}_{n}^{-1} e_{n,\alpha} \Big\|^{2} 
				\leq& 2 \sum_{n=1}^{N-1} \frac{\widehat{k}_{n}}{\nu} 
				\Big\| \widehat{k}_{n}^{-1} \eta_{n,\alpha} \Big\|^{2}
				+ 2 \sum_{n=1}^{N-1} \frac{\widehat{k}_{n}}{\nu} 
				\Big\| \widehat{k}_{n}^{-1} \phi_{n,\alpha}^{h} \Big\|^{2} \\
				\leq& \frac{C(\theta)}{\nu} \Big( \frac{h^{2s+2}}{\nu^{2}} \| p_{t} \|_{2,s+1}^{2} 
				+ h^{2r} \| u_{t} \|_{2,r}^{2}  \Big) \\
				+& \exp \Big[ \frac{C(\theta)}{\nu} \! 
				\big( k_{\rm{max}}^{4} \| u_{tt} \|_{2,2}^{2} 
				\!+\! \| |u|\|_{2,2,\beta}^{2} \!+\! \frac{ C(\!\theta\!) \nu T \!+\! 1 }{h \nu} 
				F_{2}^{2} \big)  \Big] \cdot F_{3}
			\end{align*}
			\normalcolor
		\end{confidential}
		\begin{align}
			\label{eq:diff-eta-L2}
			\sum_{n=1}^{N-1} \frac{\widehat{k}_{n}}{\nu} 
			\Big\| \widehat{k}_{n}^{-1} \eta_{n,\alpha} \Big\|^{2}
			\leq& \sum_{n=1}^{N-1} \frac{C}{\nu \widehat{k}_{n}} 
			\Big( \frac{h^{2s+2}}{\nu^{2}} \Big\| \sum_{\ell=0}^{2} \alpha_{\ell} p_{n-1+\ell} \Big\|_{s+1}^{2} 
			+ h^{2r} \Big\| \sum_{\ell=0}^{2} \alpha_{\ell} u_{n-1+\ell} \Big\|_{r}^{2} \Big) \notag \\
			\leq& \sum_{n=1}^{N-1} \frac{C(\theta)}{\nu} \Big[ \frac{h^{2s+2}}{\nu^{2}} 
			\int_{t_{n-1}}^{t_{n+1}} \| p_{t} \|_{s+1}^{2} dt 
			+ h^{2r} \int_{t_{n-1}}^{t_{n+1}} \| u_{t} \|_{r}^{2} dt  \Big] \notag \\
			=& \frac{C(\theta)}{\nu} \Big( \frac{h^{2s+2}}{\nu^{2}} \| p_{t} \|_{2,s+1}^{2} 
			+ h^{2r} \| u_{t} \|_{2,r}^{2} \Big).      
		\end{align}
		By \eqref{eq:phi-H1-diff-L2} and \eqref{eq:diff-eta-L2} and triangle inequality
		\begin{align}
			\label{eq:error-diff-time-H1}
			\sum_{n=1}^{N-1} \frac{\widehat{k}_{n}}{\nu} 
			\Big\| \widehat{k}_{n}^{-1} e_{n,\alpha} \Big\|^{2} 
			\leq&\! \frac{C(\theta)}{\nu} \!\Big(\! \frac{h^{2s\!+\!2}}{\nu^{2}} 
			\| p_{t} \|_{2,s\!+\!1}^{2} 
			\!+\! h^{2r} \| u_{t} \|_{2,r}^{2}  \!\Big)  \\
			+& \exp \Big[\! \frac{C(\theta)}{\nu} \! 
			\big(\! k_{\rm{max}}^{4} \| u_{tt} \|_{2,2}^{2} 
			\!+\! \| |u|\|_{2,2,\beta}^{2} \!+\! \frac{ C(\!\theta\!) \nu T \!+\! 1 }{h \nu} 
			F_{2}^{2} 
			\!\big)  \! \Big]  F_{3},      \notag
		\end{align}
		which implies \eqref{eq:error-diff-L2-conclusion}.
	\end{proof}

	\begin{theorem}
		Suppose the velocity $u \in X$ and pressure $p \in Q$ of the NSE in \eqref{eq:NSE} satisfy
		\begin{gather*}
			u \in \ell^{\infty}(0,N;H^{r+1}) \cap \ell^{\infty}(0,N;H^2) \cap \ell^{\infty,\beta}(0,N;H^{1})
			\cap \ell^{2,\beta}(0,N;H^{r+1} \cap H^{2}), \\
			u_{t} \in L^{2}(0,T;H^{r+1}),  \ \ 
			u_{tt} \in L^{2}(0,T;H^{r+1} \cap H^{2}), \ \ 
			u_{ttt} \in L^{2}(0,T;X' \cap L^{2}), \\
			p \!\in\! \ell^{\infty}(0,N;H^{s\!+\!1}) \!\cap\! \ell^{2,\beta}(0,N;H^{\!s+\!1}), \
			p_{t} \!\in\! L^{2}(0,T;H^{s\!+\!1}), \ p_{tt} \!\in\! L^{2}(0,T;H^{\!s+\!1} \!\cap\! H^{1}),
		\end{gather*} 
		and the 
		body force $f \in L^{2}(0,T;X'\cap L^{2})$, then under the time step bounds in \eqref{eq:ratio-limit} and the time-diameter condition in \eqref{eq:time-h-limit},
		the pressure component by the algorithm in \eqref{eq:DLN-Semi-Alg} satisfy 
		\begin{align}
			\Big( \sum_{n=1}^{N-1} \widehat{k}_{n} \| p_{n,\beta} - p_{n,\beta}^{h} \|^{2} \Big)^{1/2}
			\leq \mathcal{O} \big( h^{r},h^{s+1}, k_{\rm{max}}^{2} \big). \label{eq:pressure-L2-conclusion} 
		\end{align}
		Moreover for constant time-stepping DLN algorithm with parameter $\theta \in (0,1)$ and constant time step $k$, we have 
		\begin{align}
			\sum_{n=0}^{N} k \| p_{n} - p_{n}^{h} \| \leq \mathcal{O} \big( h^{r},h^{s+1}, k^{2} \big). 
			\label{eq:pressure-L1-conclusion} 
		\end{align}
	\end{theorem}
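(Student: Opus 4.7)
The plan is to leverage the discrete inf-sup condition \eqref{eq:inf-sup} in order to reduce the pressure error bound to velocity-type residuals already controlled in Theorems \ref{thm:error-velocity-L2} and \ref{thm:error-velocity-H1}. First I would revisit the error equation for the velocity but take the test function $v^h\in X^h$ rather than $v^h\in V^h$, so that the pressure error $(p_{n,\beta}^h-p_{n,\beta},\nabla\cdot v^h)$ no longer drops out. Letting $\widetilde{p}_{n,\beta}^h$ denote the $L^2$ (or Stokes pressure) projection of $p_{n,\beta}$ onto $Q^h$ and $\xi_{n,\beta}^h:=\widetilde{p}_{n,\beta}^h-p_{n,\beta}^h$, the identity
\begin{align*}
(\xi_{n,\beta}^h,\nabla\cdot v^h)
&= \big(\widehat{k}_n^{-1}\phi_{n,\alpha}^h,v^h\big) + \nu(\nabla\phi_{n,\beta}^h,\nabla v^h) \\
&\quad + \big[b(\widetilde{u}_n^h,u_{n,\beta}^h,v^h)-b(\widetilde{u}_n,u_{n,\beta},v^h)\big] + (\widetilde{p}_{n,\beta}^h-p_{n,\beta},\nabla\cdot v^h) + \mathcal{R}_n(v^h)
\end{align*}
holds for every $v^h\in X^h$, where $\mathcal{R}_n$ collects the projection and truncation remainders (which are bounded by \eqref{eq:approx-thm} and Lemma \ref{lemma:DLN-consistency}). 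Bounding each right-hand side term by a multiple of $\|\nabla v^h\|$ through Cauchy--Schwarz, the $b$-bounds \eqref{eq:b-bound1}--\eqref{eq:b-bound3}, and the inverse inequality \eqref{eq:inverse-estimator}, then dividing by $\|\nabla v^h\|$ and taking the supremum yields, through \eqref{eq:inf-sup}, a pointwise bound on $\|\xi_{n,\beta}^h\|$.

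Next, I would square this bound, multiply by $\widehat{k}_n$, and sum over $n=1,\ldots,N-1$. The contribution $\sum\widehat{k}_n\|\widehat{k}_n^{-1}\phi_{n,\alpha}^h\|^2$ is controlled directly by \eqref{eq:error-diff-L2-conclusion}; the viscous contribution $\nu\sum\widehat{k}_n\|\nabla\phi_{n,\beta}^h\|^2$ is controlled by the dissipation term appearing in Theorem \ref{thm:error-velocity-L2}; the nonlinear differences are treated exactly as in the proof of Theorem \ref{thm:error-velocity-H1}, using the $H^1$ velocity estimate \eqref{eq:error-H1-conclusion} together with the time-diameter assumption \eqref{eq:time-h-limit} to absorb the $h^{-1/2}$ factor arising from the inverse inequality. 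Adding the projection error $\sum\widehat{k}_n\|p_{n,\beta}-\widetilde{p}_{n,\beta}^h\|^2\lesssim h^{2s+2}\||p|\|_{2,s+1,\beta}^2$ and applying the triangle inequality then produces \eqref{eq:pressure-L2-conclusion}.

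For the constant time-step claim \eqref{eq:pressure-L1-conclusion}, the $\beta$-averaged estimate is not directly sufficient, because $p_{n,\beta}$ is a weighted combination of three consecutive pressure values. My plan is to invert the map $\{p_n^h\}\mapsto\{p_{n,\beta}^h\}$ by using the post-processing recursion $p_{n+1}^h = c_2\,p_{n+1}^{h,\mathrm{temp}} + c_1\,p_n^h + c_0\,p_{n-1}^h$ from Section \ref{sec:DLN-Alg}, combined with the analogous decomposition for the exact pressure at $t_{n,\beta}$. With constant steps the coefficients $\beta_0,\beta_1,\beta_2$ reduce to $\theta$-only constants, and the characteristic polynomial $\beta_2 z^2+\beta_1 z+\beta_0$ of the associated second-order linear recursion has both roots strictly inside the unit disk precisely when $\theta\in(0,1)$. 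Combining this $\ell^\infty$-in-$n$ stability of the reconstruction with Cauchy--Schwarz in time (to convert the discrete $\ell^2$ bound on $\|p_{n,\beta}-p_{n,\beta}^h\|$ into the discrete $\ell^1$ bound on $\|p_n-p_n^h\|$), together with the projection estimate \eqref{eq:approx-thm} and the triangle inequality, delivers \eqref{eq:pressure-L1-conclusion}.

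The main obstacle will be the stability of the reconstruction recursion used in the last step. The dominant root of $\beta_2 z^2+\beta_1 z+\beta_0$ approaches the unit circle as $\theta\to 0^+$ or $\theta\to 1^-$, so the reconstruction constants blow up at both endpoints; this explains the restriction $\theta\in(0,1)$ (and the implicit $\theta$-dependence of the constants in \eqref{eq:pressure-L1-conclusion}). Making this quantitative requires a root-location analysis analogous to the one underpinning $G$-stability of the DLN method itself, paired with a short discrete Gr\"onwall argument that prevents accumulation of the per-step $\ell^2$ pressure error across the second-order linear reconstruction.
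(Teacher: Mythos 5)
Your proposal is correct, and for the $\ell^2$-in-time bound \eqref{eq:pressure-L2-conclusion} it follows the paper's route exactly: test the error equation with $v^h\in X^h$ so the pressure survives, bound every right-hand-side term by a multiple of $\|\nabla v^h\|$, invoke the inf-sup condition \eqref{eq:inf-sup}, and recycle the velocity estimates \eqref{eq:error-L2-conclusion}, \eqref{eq:error-H1-conclusion} and \eqref{eq:error-diff-L2-conclusion} together with the $L^2$-projection error for the pressure. Where you genuinely diverge is the constant-step $\ell^1$ bound \eqref{eq:pressure-L1-conclusion}. You invert the averaging $e_{n,\beta}=\beta_2 e_{n+1}+\beta_1 e_n+\beta_0 e_{n-1}$ as a second-order linear recursion and argue Schur stability: both roots of $\beta_2 z^2+\beta_1 z+\beta_0$ lie strictly inside the unit disk exactly for $\theta\in(0,1)$ (they reach the circle at $\theta=0$, where the roots are $\pm i$, and at $\theta=1$, where one root is $-1$), so the companion-matrix powers decay geometrically and the discrete $\ell^1$ norm of $\|e_n\|$ is controlled by $\sum_n\|e_{n,\beta}\|$ plus the two starting errors; no discrete Gr\"onwall step is actually needed, since the geometric decay already gives summability. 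The paper instead avoids any root-location analysis: it applies the reverse triangle inequality to get $\beta_2\|e_{n+1}\|-\beta_1\|e_n\|-\beta_0\|e_{n-1}\|\le\|e_{n,\beta}\|$, rewrites the left side as $\tfrac{\theta}{2}(1-\theta)\|e_{n+1}\|$ plus two exact differences of consecutive norms, and telescopes the sum, so the only structural fact used is $\beta_2-\beta_1-\beta_0=\tfrac{\theta}{2}(1-\theta)>0$ together with $\beta_0,\beta_1\ge0$. Your argument is the more systematic one and makes the $\theta$-dependence of the constant transparent (it blows up like the reciprocal of the distance of the dominant root to the unit circle); the paper's is shorter and more elementary, at the price of hiding the same degeneracy inside the factor $\tfrac{2}{\theta(1-\theta)}$. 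Both routes then finish identically, converting $k\sum_n\|e_{n,\beta}\|$ to the already-proved $\ell^2$ bound by Cauchy--Schwarz in time.
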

	\begin{proof}
		Let $(P_{S}^{(u)} u_{n}, P_{S}^{(p)} p_{n})$ be Stokes projection of $(u_n,p_n)$ 
		onto $V^{h} \times Q^{h}$. We set
		\begin{gather*}
			\phi_{n}^{h} = u_{n}^{h} - P_{S}^{(u)} u_{n}, \ \ \ \eta_{n} = u_{n} - P_{S}^{(u)} u_{n}, \ \ \ 
			e_{n}^{u} = \phi_{n}^{h} - \eta_{{n}} \\
			e_{n,\alpha}^{u} = \sum_{\ell=0}^{2}{\alpha_{\ell}} e_{n-1+\ell}^{u}, \ \ \ 
			\widetilde{e}_{n}^{u} = \widetilde{u}_{n}^{h} - \widetilde{u}_{n}.
		\end{gather*}
		We let $v^{h} \in X^{h}$ in \eqref{eq:NSE-exact} and subtract \eqref{eq:NSE-exact} 
		from the first equation of \eqref{eq:DLN-Semi-Alg}
		\begin{confidential}
			\color{darkblue}
			\begin{gather*}
			\frac{1}{\widehat{k}_{n}} \Big( \sum_{\ell=0}^{2}{\alpha_{\ell}} u_{n-1+\ell}^{h}, v^{h} \Big)
			- \frac{1}{\widehat{k}_{n}} \Big( \sum_{\ell=0}^{2}{\alpha_{\ell}} u_{n-1+\ell}, v^{h} \Big)
			+ b \big( \widetilde{u}_{n}^{h}, u_{n,\beta}^{h}, v^{h} \big)
			- b \big( \widetilde{u}_{n}, u_{n,\beta}, v^{h} \big) \\
			+ \nu \big( \nabla u_{n,\beta}^{h}, \nabla v^{h} \big) 
			- \nu \big( \nabla u_{n,\beta}, \nabla v^{h} \big) 
			=  (p_{n,\beta}^{h} - p_{n,\beta} + p_{n,\beta} -p(t_{n,\beta}), \nabla \cdot v^{h}) - \tau_{n}(v^{h})
			\end{gather*}
			\begin{align*}
			& \big( \widehat{k}_{n}^{-1} e_{n,\alpha}^{u}, v^{h} \big) 
			+ \nu \big( \nabla e_{n,\beta}^{u}, \nabla v^{h} \big) 
			+ b \big( \widetilde{u}_{n}^{h}, u_{n,\beta}^{h}, v^{h} \big)
			- b \big( \widetilde{u}_{n}, u_{n,\beta}, v^{h} \big) \\
			=& - (p(t_{n,\beta}) - p_{n,\beta} + p_{n,\beta} -q_{n}^{h} + q_{n}^{h} - p_{n,\beta}^{h} , \nabla \cdot v^{h}) - \tau_{n}(v^{h}),
			\end{align*}
			\normalcolor
		\end{confidential}
		\begin{align}
			\label{eq:Diff-exact-DLN-pressure}
			( p_{n,\beta}^{h} - q_{n}^{h}, \nabla \cdot v^{h} )
			=& \big( \widehat{k}_{n}^{-1} e_{n,\alpha}^{u} , v^{h} \big) 
			+ \nu \big( \nabla e_{n,\beta}^{u}, \nabla v^{h} \big)  
			+ b \big( \widetilde{u}_{n}^{h}, u_{n,\beta}^{h}, v^{h} \big)
			- b \big( \widetilde{u}_{n}, u_{n,\beta}, v^{h} \big) \notag \\  
			&+ (p(t_{n,\beta}) - p_{n,\beta}, \nabla \cdot v^{h}) 
			+ ( p_{n,\beta} - q_{n}^{h}, \nabla \cdot v^{h} ) + \tau_{n}(v^{h}),
		\end{align}
		where $q^{h}$ is the $L^{2}$ projection of $p_{n,\beta}$ onto $Q^{h}$.
		By \eqref{eq:b-bound1} and Poincar$\acute{\rm{e}}$ inequality
		\begin{align}
			\label{eq:non-linear-bound-pressure}
			& b \big( \widetilde{u}_{n}^{h}, u_{n,\beta}^{h}, v^{h} \big)
			- b \big( \widetilde{u}_{n}, u_{n,\beta}, v^{h} \big)
			\\
			= & b \big( \widetilde{e}_{n}^{u}, u_{n,\beta}, v^{h} \big)
			 + b \big( \widetilde{u}_{n}, e_{n,\beta}^{u}, v^{h} \big)
			 + b \big( \widetilde{e}_{n}^{u}, e_{n,\beta}^{u}, v^{h} \big) \notag \\
			\leq& C \big( \| \widetilde{e}_{n}^{u} \|_{1} \| \nabla u_{n,\beta} \|\!
			+\! \| \widetilde{u}_{n} \|_{1} \| \nabla e_{n,\beta}^{u} \| 
			\!+\! \| \widetilde{e}_{n}^{u} \|_{1} \| \nabla e_{n,\beta}^{u} \| \big)
			\| \nabla v^{h} \|.   \notag 
		\end{align}
		By Cauchy-Schwarz inequality, \eqref{eq:DLN-Consistency1} and \eqref{eq:DLN-Consistency2} 
		in Lemma \ref{lemma:DLN-consistency}
		\begin{align}
			\label{eq:tau-term1-pressure}
			\Big( \frac{\sum_{\ell=0}^{2}{\alpha_{\ell}} u_{n-1+\ell}}{\widehat{k}_{n}}  
			- u(t_{n,\beta}), v^{h} \Big) 
			\leq& \Big\| \frac{\sum_{\ell=0}^{2}{\alpha_{\ell}} u_{n-1+\ell}}{\widehat{k}_{n}} 
			- u(t_{n,\beta}) \Big\|_{-1} \| \nabla v^{h} \|  \\
			\leq& C(\theta) \Big(  k_{\rm{max}}^{3} \int_{t_{n-1}}^{t_{n+1}} \| u_{ttt} \|_{-1}^{2} dt \Big)^{1/2} \| \nabla v^{h} \|,  \notag 
		\end{align}
		\begin{align}
			\label{eq:tau-term2-pressure}
			\nu \big( \nabla (u_{n,\beta} - u(t_{n,\beta})), \nabla v^{h} \big) 
			\!\leq& \! \nu \big\| \nabla (u_{n,\beta} - u(t_{n,\beta})) \big\| \| \nabla v^{h} \|  \\
			\!\leq& \! C(\theta)\nu \Big( k_{\rm{max}}^{3} 
			\int_{t_{n\!-\!1}}^{t_{n\!+\!1}} \| \nabla u_{tt} \| dt \Big)^{1/2}
			\| \nabla v^{h} \|, \notag 
		\end{align}
		\begin{align}
			\label{eq:tau-term3-pressure}
			(f(t_{n,\beta}) \!-\! f_{n,\beta},v^{h}) 
			\!\leq \! \| f(t_{n,\beta}) \!-\! f_{n,\beta} \|_{-1} \| \nabla v^{h} \|   
			\!\leq \! C(\theta) \Big(\! k_{\rm{max}}^{3} 
			\int_{t_{n\!-\!1}}^{t_{n\!+\!1}} \| f_{tt} \|_{-1} dt \! \Big)^{1/2} \| \nabla v^{h} \|
		\end{align}
		By \eqref{eq:b-bound1}, \eqref{eq:DLN-Consistency1} in Lemma \ref{lemma:DLN-consistency} and 
		Poincar$\acute{\rm{e}}$ inequality
		\begin{align}
			\label{eq:tau-term4-pressure}
			&b \big( \widetilde{u}_{n}, u_{n,\beta}, v^{h} \big)
			- b ( u(t_{n,\beta}), u(t_{n,\beta}), v^{h} )  \\
			=& b \big( \widetilde{u}_{n} - u(t_{n,\beta}), u_{n,\beta}, v^{h} \big)
			+ b \big( u(t_{n,\beta}), u_{n,\beta} - u(t_{n,\beta}), v^{h} \big) \notag \\
			\leq& C \| \nabla \big( \widetilde{u}_{n} - u(t_{n,\beta}) \big) \|
			\| \nabla u_{n,\beta} \| \| \nabla v^{h} \| 
			+ C \| \nabla u(t_{n,\beta}) \| \| \nabla \big( u_{n,\beta} - u(t_{n,\beta}) \big) \|
			\| \nabla v^{h} \|    \notag \\
			\leq& C(\theta) \big( \| |u| \|_{\infty,1} 
			+ \| |u| \|_{\infty,1,\beta} \big) 
			\Big( k_{\rm{max}}^{3} 
			\int_{t_{n-1}}^{t_{n+1}} \| \nabla u_{tt} \| dt \Big)^{1/2} \| \nabla v^{h} \|. \notag 
		\end{align}
		We combine \eqref{eq:tau-term1-pressure}, \eqref{eq:tau-term2-pressure}, \eqref{eq:tau-term3-pressure}
		and \eqref{eq:tau-term4-pressure}
		\begin{align}
			\label{eq:tau-bound-pressure}
			\tau_{n}(v^{h}) 
			\leq& C(\theta) k_{\rm{max}}^{3/2} \Big[ \big(\int_{t_{n-1}}^{t_{n+1}} \| u_{ttt} \|_{-1}^{2} dt \big)^{\frac{1}{2}} 
			+ \big( \int_{t_{n\!-\!1}}^{t_{n\!+\!1}} \| \nabla u_{tt} \| dt \big)^{\frac{1}{2}}
			+ \big( \int_{t_{n\!-\!1}}^{t_{n\!+\!1}} \| f_{tt} \|_{-1} dt \big)^{\frac{1}{2}} 
			\notag \\
			&\qquad \qquad \quad + \big( \| |u| \|_{\infty,1} + \| |u| \|_{\infty,1,\beta} \big) 
			\big( \int_{t_{n\!-\!1}}^{t_{n\!+\!1}} \| \nabla u_{tt} \| dt \big)^{1/2} \Big] 
			\| \nabla v^{h} \|.      
		\end{align}
		By \eqref{eq:approx-thm} and \eqref{eq:DLN-Consistency1}
		\begin{align}
			\label{eq:pressure-bound-pressure}
			(p(t_{n,\beta}) \!-\! p_{n,\beta}, \nabla \cdot v^{h})
			\!\leq&\! \sqrt{d} \| p(t_{n,\beta}) \!-\! p_{n,\beta} \| \| \nabla v^{h} \|
			\!\leq\! C(\theta) \Big( k_{\rm{max}}^{3} 
			\int_{t_{n\!-\!1}}^{t_{n\!+\!1}} \| p_{tt} \| dt \Big)^{\frac{1}{2}} \| \nabla v^{h} \|, \notag \\
			( p_{n,\beta} - q_{n}^{h}, \nabla \cdot v^{h} )
			\leq& \sqrt{d} \| p_{n,\beta} - q^{h} \| \| \nabla v^{h} \|
			\leq \frac{Ch^{s+1}}{\nu} \| p_{n,\beta} \|_{s+1} \| \nabla v^{h} \|.
		\end{align}
		By \eqref{eq:inf-sup},\eqref{eq:non-linear-bound-pressure}, \eqref{eq:tau-bound-pressure} 
		and \eqref{eq:pressure-bound-pressure} 
		\begin{align}
			\label{eq:error-L2-pressure}
			\| p_{n,\beta}^{h} - q_{n}^{h} \| 
			\leq & \| \widehat{k}_{n}^{-1} e_{n,\alpha}^{u} \| \!+\! \nu \| \nabla e_{n,\beta}^{u} \|
			\!+\! C(\theta) \| | u| \|_{\infty,1} \| |e_{n}^{u}| \|_{\infty,1}
			\!+\! C(\theta) \| |e_{n}^{u}| \|_{\infty,1}^{2} \\
			+ C&(\theta) k_{\rm{max}}^{3/2} \! \Big[\! \big( \!\int_{t_{n\!-\!1}}^{t_{n\!+\!1}} \| u_{ttt} \|_{-1}^{2} dt \!\big)^{\frac{1}{2}} 
			\!+\! \big(\! \int_{t_{n\!-\!1}}^{t_{n\!+\!1}} \| \nabla u_{tt} \|^{2} dt \!\big)^{\frac{1}{2}}
			\!+\! \big(\! \int_{t_{n\!-\!1}}^{t_{n\!+\!1}} \| f_{tt} \|_{-1}^{2} dt \!\big)^{\frac{1}{2}} 
			\notag \\
			&\qquad \qquad + \big( \| |u| \|_{\infty,1} + \| |u| \|_{\infty,1,\beta} \big) 
			\big( \int_{t_{n\!-\!1}}^{t_{n\!+\!1}} \| \nabla u_{tt} \|^{2} dt \big)^{\frac{1}{2}} \Big] \notag \\
			+ C&(\theta) \Big( k_{\rm{max}}^{3} 
			\int_{t_{n\!-\!1}}^{t_{n\!+\!1}} \| p_{tt} \| dt \Big)^{\frac{1}{2}}
			+ \frac{C h^{s+1}}{\nu} \| p_{n,\beta} \|_{s+1}.   \notag 
		\end{align}

		\begin{confidential}
			\color{darkblue}
			\begin{align*}
				&\frac{|\big(p_{n,\beta}^{h} \!-\! q_{n}^{h}, \nabla \cdot v^{h} \big)|}{\| \nabla v^{h} \|} \\
				\leq& \| \widehat{k}_{n}^{-1} e_{n,\alpha}^{u} \| + \nu \| \nabla e_{n,\beta}^{u} \|
				+ C(\theta) \| | u| \|_{\infty,1} \| |e_{n}^{u}| \|_{\infty,1}
				+ C(\theta) \| |e_{n}^{u}| \|_{\infty,1}^{2} \\
				&+ C(\theta) k_{\rm{max}}^{3/2} \Big[ \big(\int_{t_{n-1}}^{t_{n+1}} \| u_{ttt} \|_{-1}^{2} dt \big)^{\frac{1}{2}} 
				+ \big( \int_{t_{n\!-\!1}}^{t_{n\!+\!1}} \| \nabla u_{tt} \|^{2} dt \big)^{\frac{1}{2}}
				+ \big( \int_{t_{n\!-\!1}}^{t_{n\!+\!1}} \| f_{tt} \|_{-1}^{2} dt \big)^{\frac{1}{2}} 
				\notag \\
				&+ \big( \| |u| \|_{\infty,1} + \| |u| \|_{\infty,1,\beta} \big) 
				\big( \int_{t_{n\!-\!1}}^{t_{n\!+\!1}} \| \nabla u_{tt} \|^{2} dt \big)^{1/2} \Big]  
				+ C(\theta) \Big( k_{\rm{max}}^{3} 
				\int_{t_{n\!-\!1}}^{t_{n\!+\!1}} \| p_{tt} \| dt \Big)^{\frac{1}{2}}
				+ \frac{C h^{s+1}}{\nu} \| p_{n,\beta} \|_{s+1}. 
			\end{align*}
			\normalcolor
		\end{confidential}
		By triangle inequality, \eqref{eq:approx-thm} and \eqref{eq:error-L2-pressure}
		\begin{align}
			\label{eq:error-L2-L2-pressure}
			&\sum_{n=1}^{N-1} \widehat{k}_{n} \| p_{n,\beta} - p_{n,\beta}^{h} \|^{2} \\
			\leq& 2 \sum_{n=1}^{N-1} \widehat{k}_{n} \| p_{n,\beta}^{h} - q_{n}^{h} \|^{2}
			+ 2 \sum_{n=1}^{N-1} \widehat{k}_{n} \| p_{n,\beta} - q_{n}^{h} \|^{2} \notag \\
			\leq& C h^{2s+2}  \sum_{n=1}^{N-1} \widehat{k}_{n} \| p_{n,\beta} \|_{s+1}^{2}
			+ 2 \sum_{n=1}^{N-1} \widehat{k}_{n} \| \widehat{k}_{n}^{-1} e_{n,\alpha}^{u} \|^{2}
			+ 2 \sum_{n=1}^{N-1} \nu \widehat{k}_{n} \| \nabla e_{n,\beta}^{u} \|^{2}   \notag \\
			+& C(\theta) T \| | u| \|_{\infty,1}^{2} \| |e_{n}^{u}| \|_{\infty,1}^{2}
			+ C(\theta) T \| |e_{n}^{u}| \|_{\infty,1}^{2} \notag \\
			+& C(\theta) k_{\rm{max}}^{4}  \Big[ \| u_{ttt} \|_{2,-1}^{2}  
			+ \| \nabla u_{tt} \|_{2,0}^{2}
			+ \| f_{tt} \|_{2,-1}^{2}  + \big( \| |u| \|_{\infty,1}^{2} + \| |u| \|_{\infty,1,\beta}^{2} \big) 
			\| \nabla u_{tt} \|_{2,0}^{2}  \Big] \notag \\
			+& C(\theta) k_{\rm{max}}^{4} \| p_{tt} \|_{2,0}^{2} 
			+ \frac{Ch^{2s+2}}{\nu^{2}} \sum_{n=1}^{N-1} \widehat{k}_{n} \| p_{n,\beta} \|_{s+1}^{2}. \notag 
		\end{align}
		By \eqref{eq:DLN-Consistency1} in Lemma \ref{lemma:DLN-consistency},
		\eqref{eq:error-L2-inf-final}, \eqref{eq:error-grad-velocity}, \eqref{eq:error-diff-time-H1} 
		and \eqref{eq:error-L2-L2-pressure}
		\begin{confidential}
			\color{darkblue}
			\begin{align*}
				&\sum_{n=1}^{N-1} \widehat{k}_{n} \| p_{n,\beta} - p_{n,\beta}^{h} \|^{2} \\
				\leq& C(\theta) h^{2s+2} k_{\rm{max}}^{4}  \sum_{n=1}^{N-1} \int_{t_{n-1}}^{t_{n+1}} \| p_{tt} \|_{s+1}^{2} dt
				+ C(\theta) h^{2s+2} \sum_{n=1}^{N-1} (k_{n}+k_{n-1}) \| p(t_{n,\beta}) \|_{s+1}^{2} \\
				+& \! C(\theta) \!\Big(\! \frac{h^{2s\!+\!2}}{\nu^{2}} 
				\| p_{t} \|_{2,s\!+\!1}^{2} \!+\! h^{2r} \| u_{t} \|_{2,r}^{2}  \!\Big)  
				+ \nu \exp \Big[\! \frac{C(\theta)}{\nu} \! \big(\! k_{\rm{max}}^{4} \| u_{tt} \|_{2,2}^{2} 
				\!+\! \| |u|\|_{2,2,\beta}^{2} \!+\! \frac{ C(\!\theta\!) \nu T \!+\! 1 }{h \nu} 
				F_{2}^{2} \!\big)  \! \Big]  F_{3}          \\
				+&   
				\exp \Big[ \frac{C(\theta)}{\nu} \big( k_{\rm{max}}^{4} \| u_{tt} \|_{2,2}^{2} 
				+ \| |u| \|_{2,2,\beta}^{2} \big) \Big] F_{1} +\! Ch^{2r} \!\| |u| \|_{\infty,r}^{2}
				\!+\! C(\!\theta\!) \! \nu h^{2r} \!\big( \!k_{\rm{max}}^{4} \!\| u_{tt} \|_{2,r\!+\!1}^{2}
				\!+\! \| |u| \|_{2,r\!+\!1,\beta}^{2} \!\big)  \\
				+& C(\theta) T ( \| | u| \|_{\infty,1}^{2} + 1 ) \Big\{ Ch^{2r} \| |u| \|_{\infty,r+1}^{2} 
				\!+\! \frac{Ch^{2s+2}}{\nu^{2}} \| |p| \|_{\infty,s+1}^{2}  
				+ \exp \Big[ \frac{C(\theta)}{\nu}  \big( k_{\rm{max}}^{4} \| u_{tt} \|_{2,2}^{2} 
				+ \| |u|\|_{2,2,\beta}^{2} + \frac{ C(\theta) \nu T + 1 }{h \nu} F_{2}^{2} 	\big)   \Big] F_{3} \Big\} \\
				+& C(\theta) k_{\rm{max}}^{4}  \Big[ \| u_{ttt} \|_{2,-1}^{2}  
				+ \| \nabla u_{tt} \|_{2,0}^{2}
				+ \| f_{tt} \|_{2,-1}^{2}  + \big( \| |u| \|_{\infty,1} + \| |u| \|_{\infty,1,\beta} \big) 
				\| \nabla u_{tt} \|_{2,0}^{2}  \Big] \notag \\
				+& C(\theta) k_{\rm{max}}^{4} \| p_{tt} \|_{2,0}^{2} 
				+ \frac{Ch^{2s+2}}{\nu^{2}} \sum_{n=1}^{M-1} \widehat{k}_{n} \| p_{n,\beta} \|_{s+1}^{2}.
			\end{align*}
			\normalcolor
		\end{confidential}
		\begin{align}
			\label{eq:error-L2-L2-pressure-final}
			&\sum_{n=1}^{N-1} \widehat{k}_{n} \| p_{n,\beta} - p_{n,\beta}^{h} \|^{2} \\
			\leq& C(\theta) h^{2s\!+\!2} k_{\rm{max}}^{4} \| p_{tt} \|_{2,s\!+\!1}^{2} 
			+ C(\theta) h^{2s\!+\!2} \| |p| \|_{2,s\!+\!1,\beta}^{2}
			\!+\! C(\theta) \!\Big(\! \frac{h^{2s\!+\!2}}{\nu^{2}}  \| p_{t} \|_{2,s\!+\!1}^{2} \!+\! h^{2r} \| u_{t} \|_{2,r}^{2}  \!\Big) 
			\notag \\
			+& \nu \exp \Big[\! \frac{C(\theta)}{\nu} \! \big(\! k_{\rm{max}}^{4} \| u_{tt} \|_{2,2}^{2} 
			\!+\! \| |u|\|_{2,2,\beta}^{2} \!+\! \frac{ C(\!\theta\!) \nu T \!+\! 1 }{h \nu} 
			F_{2}^{2} \!\big)  \! \Big]  F_{3} \notag \\
			+& \exp \!\Big[ \frac{C(\theta)}{\nu} \big( k_{\rm{max}}^{4} \| u_{tt} \|_{2,2}^{2} 
			+ \| |u| \|_{2,2,\beta}^{2} \big) \Big] \! F_{1} \!+\! Ch^{2r} \!\| |u| \|_{\infty,r}^{2}  \notag \\
			+& C(\!\theta\!)  \nu h^{2r} \!\big( \!k_{\rm{max}}^{4} \!\| u_{tt} \|_{2,r\!+\!1}^{2}
			\!+\! \| |u| \|_{2,r\!+\!1,\beta}^{2} \!\big)   \notag \\
			+& C(\theta) T ( \| | u| \|_{\infty,1}^{2} + 1 ) \Big\{ Ch^{2r} \| |u| \|_{\infty,r+1}^{2} 
			+ \frac{Ch^{2s+2}}{\nu^{2}} \| |p| \|_{\infty,s+1}^{2}  \notag \\
			& \qquad \qquad \qquad \qquad \quad + \exp \Big[ \frac{C(\theta)}{\nu}  \big( k_{\rm{max}}^{4} \| u_{tt} \|_{2,2}^{2} 
			+ \| |u|\|_{2,2,\beta}^{2} + \frac{ C(\theta) \nu T + 1 }{h \nu} F_{2}^{2} 	\big)   \Big] F_{3} \Big\} \notag \\
			&+ C(\theta) k_{\rm{max}}^{4}  \Big[ \| u_{ttt} \|_{2,-1}^{2}  
			+ \| \nabla u_{tt} \|_{2,0}^{2}
			+ \| f_{tt} \|_{2,-1}^{2}  + \big( \| |u| \|_{\infty,1} + \| |u| \|_{\infty,1,\beta} \big) 
			\| \nabla u_{tt} \|_{2,0}^{2}  \Big] \notag \\
			&+ C(\theta) k_{\rm{max}}^{4} \| p_{tt} \|_{2,0}^{2} 
			+ \frac{C(\theta)h^{2s+2}}{\nu^{2}} \big( k_{\rm{max}}^{4} \| p_{tt} \|_{2,s+1}^{2} + \| |p| \|_{2,s+1,\beta}^{2} \big),
			\notag
		\end{align}
		which implies \eqref{eq:pressure-L2-conclusion}.
		For constant step case, $\varepsilon_{n} = 0$ and 
		the coefficients $\{ \beta_{\ell}^{(n)} \}_{\ell=0}^{2}$ are
		\begin{gather*}
			\beta _{2} = \frac{1}{4} (2 + \theta - \theta^2 ), \ \ \ 
			\beta _{1} = \frac{1}{2}\theta^2, \ \ \ 
			\beta _{0} = \frac{1}{4} (2 -\theta - \theta^2 ).
		\end{gather*}
		\begin{confidential}
			\color{darkblue}
			\begin{align*}
				\| p_{n,\beta} - p_{n,\beta}^{h} \| 
				=& \| \beta_{2} (p_{n+1} - p_{n+1}^{h}) + \beta_{1} (p_{n} - p_{n}^{h}) + 
				    \beta_{0} (p_{n-1} - p_{n-1}^{h}) \| \\
				\geq& \beta_{2} \| p_{n+1} - p_{n+1}^{h} \| 
				- \beta_{1} \| p_{n} - p_{n}^{h} \|
				- \beta_{0} \| p_{n-1} - p_{n-1}^{h} \|  \\
				=& (\beta_{2} - \beta_{1} - \beta_{0}) \| p_{n+1} - p_{n+1}^{h} \|
				+ \beta_{1} \big( \| p_{n+1} - p_{n+1}^{h} \| - \| p_{n} - p_{n}^{h} \| \big) 
				+ \beta_{0} \big( \| p_{n+1} - p_{n+1}^{h} \| - \| p_{n-1} - p_{n-1}^{h} \| \big) \\
				=& \frac{\theta}{2}(1-\theta) \| p_{n+1} - p_{n+1}^{h} \|
				+ \beta_{1} \big( \| p_{n+1} - p_{n+1}^{h} \| - \| p_{n} - p_{n}^{h} \| \big) \\
				&+ \beta_{0} \big( \| p_{n+1} - p_{n+1}^{h} \| - \| p_{n} - p_{n}^{h} \|
				+ \| p_{n} - p_{n}^{h} \| - \| p_{n-1} - p_{n-1}^{h} \| \big) \\
				=& \frac{\theta}{2}(1-\theta) \| p_{n+1} - p_{n+1}^{h} \|
				+ (\beta_{1} + \beta_{0}) \big( \| p_{n+1} - p_{n+1}^{h} \| - \| p_{n} - p_{n}^{h} \| \big) 
				+ \beta_{0} \big( \| p_{n} - p_{n}^{h} \| - \| p_{n-1} - p_{n-1}^{h} \| \big)
			\end{align*}
			\normalcolor
		\end{confidential}
		\begin{align*}
			\| p_{n,\beta} - p_{n,\beta}^{h} \| 
			\geq& \beta_{2} \| p_{n+1} - p_{n+1}^{h} \| - \beta_{1} \| p_{n} - p_{n}^{h} \|
			    - \beta_{0} \| p_{n-1} - p_{n-1}^{h} \|  \\
			=& \frac{\theta}{2}(1-\theta) \| p_{n+1} - p_{n+1}^{h} \|
			+ (\beta_{1} + \beta_{0}) \big( \| p_{n+1} - p_{n+1}^{h} \| - \| p_{n} - p_{n}^{h} \| \big) \\
			&+ \beta_{0} \big( \| p_{n} - p_{n}^{h} \| - \| p_{n-1} - p_{n-1}^{h} \| \big).	
		\end{align*}
		By triangle inequality and \eqref{eq:approx-thm}
		\begin{align}
			\label{eq:error-const-L1-pressure-1}
			\sum_{n=0}^{N} k \| p_{n} - p_{n}^{h} \|
			\leq& \sum_{n=1}^{N-1} k \| p_{n,\beta} - p_{n,\beta}^{h} \| 
			+ C(\theta) k \big( \| p_{1} - p_{1}^{h} \| + \| p_{0} - p_{0}^{h} \| \big) \\
			\leq& \sum_{n=1}^{N-1} k \big( C h^{s+1} \| p_{n,\beta} \|_{s+1} 
			\!+\! \| p_{n,\beta}^{h} - q_{n}^{h} \| \big) 
			\!+\! C(\theta) k \big( \| p_{1} \!-\! p_{1}^{h} \| \!+\! \| p_{0} \!-\! p_{0}^{h} \| \big). \notag 
 		\end{align}
\begin{confidential}
 			\color{darkblue}
 			\begin{align*}
 				C h^{s+1} \sum_{n=1}^{N-1} k \| p_{n,\beta} \|_{s+1} 
 				\leq& C h^{s+1} \Big(  \sum_{n=1}^{N-1} k \| p_{n,\beta} - p(t_{n,\beta}) \|_{s+1} 
 				                                + \sum_{n=1}^{N-1} k \| p(t_{n,\beta}) \|_{s+1}   \Big)       \\
 				\leq& C h^{s+1} \Big\{ C(\theta) \sum_{n=1}^{N-1} k^{5/2} \big( \int_{t_{n-1}}^{t_{n+1}}  \| p_{tt} \|_{s+1}^{2} dt \big)^{1/2} + \big[ (N-1)k \big( \sum_{n=1}^{N-1} k \| p(t_{n,\beta}) \|_{s+1}^{2} \big) \big]^{1/2}  \Big\}  \\
 				\leq& C h^{s+1} \Big( C(\theta) \Big[ (N-1)k \Big( \sum_{n=1}^{N-1} k^{2}\int_{t_{n-1}}^{t_{n+1}}  \| p_{tt} \|_{s+1}^{2} dt \Big) \Big]^{1/2} + \sqrt{T} \| |p|\|_{2,s+1,\beta} \Big) \\
 				\leq& C(\theta) \sqrt{T}  h^{s+1} \big(  k^{2}  \| p_{tt} \|_{2,s+1} + \| |p|\|_{2,s+1,\beta}  \big)
 			\end{align*}
 			\normalcolor
\end{confidential}
		By Cauchy-Schwarz inequality and \eqref{eq:DLN-Consistency1} in Lemma \ref{lemma:DLN-consistency}
		\begin{align}
			\label{eq:error-const-L1-pressure-2}
			C h^{s+1} \sum_{n=1}^{N-1} k \| p_{n,\beta} \|_{s+1}
			\leq& C(\theta) \sqrt{T}  h^{s+1} \big(  k^{2}  \| p_{tt} \|_{2,s+1} + \| |p|\|_{2,s+1,\beta}  \big),  \\
			\sum_{n=1}^{N-1} k \| p_{n,\beta}^{h} - q_{n}^{h} \|
			\leq& \sqrt{T} \Big( \sum_{n=1}^{N-1} k \| p_{n,\beta}^{h} - q_{n}^{h} \|^{2} \Big)^{1/2}.     \notag 
		\end{align}
		The bound of term $\sum_{n=1}^{N-1} k \| p_{n,\beta}^{h} - q_{n}^{h} \|^{2}$ is given 
		in \eqref{eq:error-L2-L2-pressure-final}. Thus we combine \eqref{eq:error-L2-L2-pressure-final},
		\eqref{eq:error-const-L1-pressure-1} and \eqref{eq:error-const-L1-pressure-2}
		to obtain 
		\begin{align}
		    &\sum_{n=0}^{N} k \| p_{n} - p_{n}^{h} \| \\
		    \leq& C(\theta) \sqrt{T} h^{s\!+\!1} \big(  k^{2}  \| p_{tt} \|_{2,s\!+\!1} \!+\! \| |p|\|_{2,s\!+\!1,\beta}  \big)
			\!+\! C(\theta) \sqrt{T} \!\Big(\! \frac{h^{s\!+\!1}}{\nu}  \| p_{t} \|_{2,s\!+\!1} \!+\! h^{r} \| u_{t} \|_{2,r}  \!\Big) 
			\notag \\
			+& \sqrt{\nu T} \exp \Big[\! \frac{C(\theta)}{\nu} \! \big(\! k^{4} \| u_{tt} \|_{2,2}^{2} 
			\!+\! \| |u|\|_{2,2,\beta}^{2} \!+\! \frac{ C(\!\theta\!) \nu T \!+\! 1 }{h \nu} 
			F_{2}^{2} \!\big)  \! \Big]  \sqrt{F_{3}}  \notag \\
			+& \exp \!\Big[ \frac{C(\theta)}{\nu} \big( k^{4} \| u_{tt} \|_{2,2}^{2} 
			+ \| |u| \|_{2,2,\beta}^{2} \big) \Big] \! \sqrt{T F_{1}} \!+\! C \sqrt{T} h^{r} \!\| |u| \|_{\infty,r}  \notag \\
			+& C(\!\theta\!)  \sqrt{\nu T } h^{r} \!\big( \!k^{2} \!\| u_{tt} \|_{2,r\!+\!1}
			\!+\! \| |u| \|_{2,r\!+\!1,\beta} \!\big)   \notag \\
			+& C(\theta) T ( \| | u| \|_{\infty,1} + 1 ) \Big\{ Ch^{r} \| |u| \|_{\infty,r+1}
			+ \frac{Ch^{s+1}}{\nu} \| |p| \|_{\infty,s+1}  \notag \\
			& \qquad \qquad \qquad \qquad \quad + \exp \Big[ \frac{C(\theta)}{\nu}  \big( k^{4} \| u_{tt} \|_{2,2}^{2} 
			+ \| |u|\|_{2,2,\beta}^{2} + \frac{ C(\theta) \nu T + 1 }{h \nu} F_{2}^{2} 	\big)   \Big] \sqrt{F_{3}} \Big\} \notag \\
			+& C(\theta) \sqrt{T} k^{2}  \Big[ \| u_{ttt} \|_{2,-1}
			+ \| \nabla u_{tt} \|_{2,0}
			+ \| f_{tt} \|_{2,-1} + \sqrt{\| |u| \|_{\infty,1} \!+\! \| |u| \|_{\infty,1,\beta} }
			\| \nabla u_{tt} \|_{2,0}  \Big] \notag \\
			+& C(\theta) \sqrt{T} k^{2} \| p_{tt} \|_{2,0}
			+ \frac{C(\theta)\sqrt{T} h^{s+1}}{\nu} \big( k^{2} \| p_{tt} \|_{2,s+1} + \| |p| \|_{2,s+1,\beta} \big) \notag \\
			+& C(\theta) k \big( \| p_{1} \!-\! p_{1}^{h} \| \!+\! \| p_{0} \!-\! p_{0}^{h} \| \big), \notag
		\end{align}
		which implies \eqref{eq:pressure-L1-conclusion}.
		
	\end{proof}

	\section{Implementation of Adaptive DLN Algorithm}
	\label{sec:Implement-DLN}
	We present two ways of time adaptivity for the whole family of DLN methods. The first way is to use the local truncation error (LTE) criterion:  we apply the revised AB2 method (herein AB2-like method) to estimate the error of the DLN scheme for NSE and adjust the time step according to the ratio of the required tolerance and the estimator. The second way is to adapt the time step to control the numerical dissipation.  
	\subsection{Local Truncation Error Criterion} \ \\
	Given four previous solutions $u_{n}^{h},u_{n-1}^{h},u_{n-2}^{h},u_{n-3}^{h}$, 
	the AB2-like solution for NSE at time $t_{n+1}$ is 
	\begin{align}
	    \label{eq:AB2-like}
	    u_{n+1}^{h,\tt AB2} &= 
	    \Big[ 1 +  \alpha_2 \frac{ (t_{n+1} - t_n) ( t_{n+1} + t_n - 2 t_{n-1,\beta} )}{2 ( t_{n,\beta} - t_{n-1,\beta}) \widehat{k}_{n-1}}  \Big]
	    u_{n}^{h} 
	    \\
	    & 
	    +  \frac{(t_{n+1} - t_n) }{2 ( t_{n,\beta} - t_{n-1,\beta})} \Big[ \alpha_1  \frac{ ( t_{n+1} + t_n - 2 t_{n-1,\beta} )}{\widehat{k}_{n-1}} 
	    - \alpha_2 \frac{ ( t_{n+1} + t_n - 2t_{n,\beta} ) }{\widehat{k}_{n-2}}  \Big] 
	   u_{n-1}^{h}
	    \notag
	    \\
	    & 
	    + \frac{(t_{n+1} - t_n)}{2 ( t_{n,\beta} - t_{n-1,\beta})} \Big[ \alpha_0 \frac{  ( t_{n+1} + t_n - 2 t_{n-1,\beta} )}{ \widehat{k}_{n-1}}  
	    -  \alpha_1 \frac{  ( t_{n+1} + t_n - 2t_{n,\beta} ) }{ \widehat{k}_{n-2}}  \Big]
	    u_{n-2}^{h}  
	    \notag 
	    \\
	    & 
	    - \alpha_0 \frac{ (t_{n+1} - t_n ) ( t_{n+1} + t_n - 2t_{n,\beta} ) }{2 ( t_{n,\beta} - t_{n-1,\beta}) \widehat{k}_{n-2}}   u_{n-3}^{h}, \notag
	\end{align}
    and the estimators are 
	\begin{align}
	\widehat{T}_{n+1}
	=&
	\frac{ |G^{(n)}|}{ | G^{(n)} + {\cal R}^{(n)} | } \| u_{n+1}^{h,\tt DLN}-u_{n+1}^{h,\tt AB2} \|,     \tag{Absolute estimator}   \label{eq:ESTabs-AB2DLN} \\
	\widehat{T}_{n+1}
	=&
	\frac{ |G^{(n)}|}{ | G^{(n)} + {\cal R}^{(n)} | } \frac{\| u_{n+1}^{h,\tt DLN}-u_{n+1}^{h,\tt AB2} \|}{\| u_{n+1}^{h,\tt DLN} \|},
	\tag{Relative estimator}   \label{eq:ESTrel-AB2DLN}
	\end{align}
	where
	\begin{align}
	\label{eq:EST-LTE-coefficients}
	G^{(n)}
	&=\Big(\frac{1}{2}-\frac{\alpha _{0}}{2\alpha _{2}}\frac{1-\varepsilon
		_{n}}{1+\varepsilon _{n}}\Big)\Big(\beta _{2}^{(n)}-\beta _{0}^{(n)}\frac{%
		1-\varepsilon _{n}}{1+\varepsilon _{n}}\Big)^{2}+\frac{\alpha _{0}}{6\alpha
		_{2}}\Big(\frac{1-\varepsilon _{n}}{1+\varepsilon _{n}}\Big)^{3}-\frac{1}{6}, \\
	{\cal R}^{(n)} 
	&= 
	\frac{1}{12}
	\Big[ 2 + 
	\frac{3(1 - \varepsilon_{n})}{1 + \varepsilon_{n}} \Big( 1 - \beta_{2}^{(n-2)}  
	\frac{1 - \varepsilon_{n-1}}{1 + \varepsilon_{n-1}}  
	+ \beta_{0}^{(n-2)} \frac{1 - \varepsilon_{n-2}}{1 + \varepsilon_{n-2}}  
	\frac{1 - \varepsilon_{n-1}}{1 + \varepsilon_{n-1}}  \Big) \times
	\notag \\
	& 
	\hspace{4.5cm}
	\times \Big( 1 - \beta_{2}^{(n-1)} \frac{1 - \varepsilon_{n}}{1 + \varepsilon_{n}} 
	+ \beta_{0}^{(n-1)} \frac{1 - \varepsilon_{n-1}}{1 + \varepsilon_{n-1}} 
	\frac{1 - \varepsilon_{n}}{1 + \varepsilon_{n}}  \Big) 
	\notag \\
	& \ \ \ \ \ \ \ \ 
	\!+\! \frac{3(1 \!-\! \varepsilon_{n})}{1 \!+\! \varepsilon_{n}} 
	\Big( \! \frac{2}{1 \!+\! \varepsilon_{n}}  \!-\!  \beta_{2}^{(n\!-\!2)} 
	\frac{1 \!-\! \varepsilon_{n\!-\!1}}{1 \!+\! \varepsilon_{n\!-\!1}}  \frac{1 \!-\! \varepsilon_{n}}{1 \!+\! \varepsilon_{n}} 
	\!+\! \beta_{0}^{(n\!-\!2)} \frac{1 \!-\! \varepsilon_{n\!-\!2}}{1 \!+\! \varepsilon_{n\!-\!2}} 
	\frac{1 \!-\! \varepsilon_{n\!-\!1}}{1 \!+\! \varepsilon_{n\!-\!1}}  \frac{1 \!-\! \varepsilon_{n}}{1 \!+\! \varepsilon_{n}}  \! \Big) \! \times
	\notag \\
	& 
	\hspace{7cm}
	\times \Big( - \beta_{2}^{(n-1)}  
	+ \beta_{0}^{(n-1)} \frac{1 - \varepsilon_{n-1}}{1 + \varepsilon_{n-1}} \Big)  \Big]. \notag
	\end{align}
	\begin{confidential}
		\color{darkblue}
		\begin{align*}
		& 
		{\cal R}^{(n)}
		= 
		\frac{1}{12}
		\Big[ 2 + 
		\frac{3}{\tau_n} \Big( 1 - \beta_{2}^{(n-2)} \frac{1}{\tau_{n-1}}
		+ \beta_{0}^{(n-2)} \frac{1}{\tau_{n-2}}\frac{1}{\tau_{n-1}}\Big) \times
		\\
		& 
		\hspace{6.2cm}
		\times \Big( 1 - \beta_{2}^{(n-1)} \frac{1}{\tau_n} 
		+ \beta_{0}^{(n-1)} \frac{1}{\tau_{n-1}}\frac{1}{\tau_n} \Big) 
		\\
		& 
		\hspace{2.cm}
		+ \frac{3}{\tau_n} \Big( 1 + \frac{1}{\tau_n} -  \beta_{2}^{(n-2)} \frac{1}{\tau_{n-1}} \frac{1}{\tau_n}
		+ \beta_{0}^{(n-2)} \frac{1}{\tau_{n-2}}\frac{1}{\tau_{n-1}}\frac{1}{\tau_n} \Big) \times
		\\
		& 
		\hspace{7cm}
		\times \Big( - \beta_{2}^{(n-1)}  
		+ \beta_{0}^{(n-1)} \frac{1}{\tau_{n-1}} \Big)  
		\Big].
		\end{align*}
		\normalcolor
	\end{confidential}
	We refer to \cite{LPT22_Tech} for the derivation of the AB2-like method in \eqref{eq:AB2-like} and the 
	estimator in \eqref{eq:ESTabs-AB2DLN} and \eqref{eq:ESTrel-AB2DLN}.
	We use the step controller proposed by Hairer and Wanner in \cite{HW10_Springer} to adjust the next time step
	\begin{align}
	k_{n+1}=k_{n}\cdot \min \Big\{1.5,\max \Big\{0.2,\kappa \big({\text{Tol}}/{%
		\widehat{T}_{n+1} }\big)^{1/3}\Big\}\Big\},
	\label{eq:StepController-Improved}
	\end{align}
	where $\text{Tol}$ is the required tolerance and the safety factor $\kappa \in (0,1]$ is selected to minimize the number of step rejections. 
	If $\widehat{T}_{n+1} >\text{Tol}$, then the DLN solution is rejected and the current step $k_{n}$ is adjusted by \eqref{eq:StepController-Improved} for recomputing. 
	We summarize the above adaptive DLN algorithm in Algorithm \ref{alg:Adaptivity-AB2-like} 
	\LinesNumberedHidden
	\begin{algorithm}
		\caption{Adaptivity of  DLN (estimator of LTE by AB2-like scheme)}
		\label{alg:Adaptivity-AB2-like}
		\KwIn{tolerance $\text{Tol}$, four previous solutions $u_{n}^{h},u_{n-1}^{h},u_{n-2}^{h},u_{n-3}^{h}$
				and $p_{n}^{h},p_{n-1}^{h},p_{n-2}^{h},p_{n-3}^{h}$,
			    current time step $k_{n}$, three previous time step $k_{n-1},k_{n-2},k_{n-3}$, safety factor $\kappa$, \;}
			compute the DLN solution $u_{n+1}^{h,\tt DLN}$ and $p_{n+1}^{h,\tt DLN}$ 
			by \eqref{eq:DLN-Semi-Alg} \;
			compute the AB2-like solution $u_{n+1}^{h,\tt AB2}$ by \eqref{eq:AB2-like} \;
			use $k_{n},k_{n-1},k_{n-2},k_{n-3}$ to update $\varepsilon_{n},\varepsilon_{n-1},\varepsilon_{n-2}$ \;
			compute $G^{(n)}$, ${\cal R}^{(n)}$ by \eqref{eq:EST-LTE-coefficients} \;
			$\widehat{T}_{n+1} \Leftarrow \frac{ |G^{(n)}|}{ | G^{(n)} + {\cal R}^{(n)} | } 
			\| u_{n+1}^{h,\tt DLN}-u_{n+1}^{h,\tt AB2} \| $
			or $\widehat{T}_{n+1} \Leftarrow \frac{ |G^{(n)}|}{ | G^{(n)} + {\cal R}^{(n)} | } 
			\frac{\| u_{n+1}^{h,\tt DLN} - u_{n+1}^{h,\tt AB2} \|}{\| u_{n+1}^{h} \|}$ \;
			\uIf{$ \widehat{T}_{n+1}  < \rm{Tol}$}
			{
				$u_{n+1}^{h} \Leftarrow u_{n+1}^{h,\tt DLN}$ and 
				$p_{n+1}^{h} \Leftarrow p_{n+1}^{h,\tt DLN}$  \tcp*{accept the result}
				$k_{n\!+\!1} \!\Leftarrow \! k_{n} \cdot \min \big\{ \!1.5, \max \big\{\!0.2, \kappa \big(\!\frac {\text{Tol}}{ \widehat{T}_{n+1}  }\!\big)^{1/3} \big\} \!\big\}$  
				\tcp*{adjust step by \eqref{eq:StepController-Improved}}
			}\Else
			{
				\tt{// adjust current step to recompute}  
				$k_{n} \!\Leftarrow \! k_{n} \cdot \min \big\{ 1.5, \max \big\{0.2, \kappa \big(\frac {\text{Tol}}{ \widehat{T}_{n+1}  }\big)^{1/3} \big\} \big\}$ \;
			}
	\end{algorithm}

	\subsection{Numerical Dissipation Criterion} 
	The algorithm proposed by Capuano, Sanderse, De Angelis and Coppola \cite{CSDC17} calibrates the step size to ensure the ratio of numerical dissipation and viscosity under the required value and its effect on the fully-implicit DLN scheme has been tested in \cite{LPQT21_NMPDE}.
	Given the tolerance $\rm{Tol}$, the maximum time step $k_{\rm{max}}$, the minimum time step $k_{\rm{min}}$, we compute the DLN solution and the ratio of numerical dissipation and viscosity
	\begin{gather*}
		\chi_{n+1} = \mathcal{E}_{n+1}^{\tt ND} / \mathcal{E}_{n+1}^{\tt VD}.
	\end{gather*}
	If $\chi_{n\!+\!1} \!\leq \! \rm{Tol}$, we accept the current solutions and double the time step.
	Otherwise, we halve the time step for recomputing. We summarize the algorithm in Algorithm \ref{alg:Adaptivity-ND}.
	\begin{algorithm}
		\caption{Adaptivity by Numerical Dissipation Criterion}
		\label{alg:Adaptivity-ND}
		\KwIn{tolerance $\text{Tol}$, two previous solutions $u_{n}^{h},u_{n-1}^{h}$ 
		    and $p_{n}^{h},p_{n-1}^{h}$, current time step $k_{n}$, previous time step $k_{n-1}$, \;}
		compute the DLN solution $u_{n+1}^{h,\tt DLN}$ and $p_{n+1}^{h,\tt DLN}$ by \eqref{eq:DLN-Semi-Alg} \;
		compute numerical dissipation $\mathcal{E}_{n+1}^{\tt ND}$ and viscosity $\mathcal{E}_{n+1}^{\tt VD}$ \;
		$\chi_{n+1} \Leftarrow \mathcal{E}_{n+1}^{\tt ND} / \mathcal{E}_{n+1}^{\tt VD}$ \;
		\uIf{$ \chi_{n+1}  < \rm{Tol}$}
		{
			$u_{n+1}^{h} \Leftarrow u_{n+1}^{h,\tt DLN}$ 
			and $p_{n+1}^{h} \Leftarrow p_{n+1}^{h,\tt DLN}$   \tcp*{accept result}
			$k_{n\!+\!1} \!\Leftarrow \!  \min \big\{ 2k_{n}, k_{\rm{max}} \!\big\}$  
			\tcp*{double the step }
		}\Else
		{
			$k_{n} \!\Leftarrow \!  \max \big\{ 0.5k_{n}, k_{\rm{min}} \big\}$
			\tcp*{halve current step to recompute}
		}
	\end{algorithm}
	
	\section{Numerical Tests}
	\label{sec:Num-Test}
	We apply the semi-implicit DLN algorithm in \eqref{eq:DLN-Semi-Alg} with $\theta = 2/3, 2/\sqrt{5}, 1$ for all numerical tests. $\theta = 2/3$ is suggested in \cite{DLN83_SIAM_JNA} to balance the stability and local truncation error. 
	$\theta = 2/\sqrt{5}$ is recommended in \cite{KS05} for stability at infinity (a property similar to $L$-stability).
	$\theta = 1$ reduces to the midpoint rule.
	We use software FreeFem++ and Taylor-Hood ($P2-P1$) finite element space for programming. 
	\subsection{Convergence Test}
	\label{subsec:test-const}
	We use the Taylor-Green benchmark problem \cite{TG1937_RSL} to verify that the semi-implicit DLN algorithm is second-order accurate. In addition, its efficiency over the fully-implicit algorithm can be observed.
	The exact solutions in the domain $\Omega = [0,1]\times[0,1]$ are 
	\begin{gather}
		\label{eq:Taylor-Green-exact}
		u_{1}(x,y,t) = -\cos(\omega \pi x) \sin(\omega \pi y) \exp (- 2 \omega^2 \pi^2 t/\tau ), \\
		u_{2}(x,y,t) = \sin(\omega \pi x) \cos(\omega \pi y) \exp (- 2 \omega^2 \pi^2 t/\tau ),  \notag \\
		p(x,y,t) = - \frac{1}{4} \big( \cos(2 \omega \pi x) + \cos(2 \omega \pi y) \big) 
		\exp (- 4 \omega^2 \pi^2 t/\tau ).    \notag
	\end{gather}
	We set the parameters $\omega = 1$, $\tau = 1/\nu = 100$. The initial value, boundary condition and source function $f$ are determined by the exact solutions in \eqref{eq:Taylor-Green-exact}. 
	We require that the constant time step $k$ and mesh diameter $h$ are the same to satisfy the time-diameter condition in \eqref{eq:time-h-limit}. 
	We simulate the problem over the time interval $[0,1]$.
	The convergence rate $R$ is calculated by 
	\begin{gather*}
		R = \frac{\log\big({\rm{error}}(k,h) / {\rm{error}}(\frac{k}{2},\frac{h}{2})\big)}{\log(2)}.
	\end{gather*}
	The results of the semi-implicit DLN algorithm (with constant time step) are given in \cref{table:Semi-DLN-23-Linf,table:Semi-DLN-23-L2,table:Semi-DLN-25-Linf,table:Semi-DLN-25-L2,table:Semi-DLN-1-Linf,table:Semi-DLN-1-L2}.
	We see that the semi-implicit DLN has third-order convergence in velocity and second-order convergence in pressure for all three $\theta$ values. 
	Hence the semi-implicit DLN scheme has much better performance in the Taylor-Green problem
	than the theories in Subsection \ref{sec:Err-Ana} suggest. 
	Then we apply the fully-implicit DLN scheme to the same problem and use fixed point iteration to solve the non-linear system at each time step. 
	The error and convergence rate are given in \cref{table:Full-DLN-23-Linf,table:Full-DLN-23-L2,table:Full-DLN-25-Linf,table:Full-DLN-25-L2,table:Full-DLN-1-Linf,table:Full-DLN-1-L2}. 
	From the above tables, we observe that the semi-implicit DLN algorithm outperforms the fully-implicit algorithm for all three $\theta$ values in this test problem: 
	The two schemes have almost the same error magnitude but the fully-implicit scheme takes twice the time to finish the simulation since the fully-implicit scheme takes two iterations on average at each time step.
	\begin{table}[ptbh]
    	\centering
    	\caption{$L^{\infty}$-norm of error and rate for semi-implicit DLN scheme ($\theta=2/3$)}
    	\begin{tabular}{lccccccc}
    		\hline
    		\hline
    		$k\!=\!h$  & $\| |u\! - \!u^{h}| \|_{\!\infty\!,\!0\!}$ & $\!R\!$ 
    		& $\| |u \!- \!u^{h}| \|_{\!\infty\!,\!1\!}$   & $\!R\!$  
    		& $\| |p \!-\! p^{h}| \|_{\!\infty\!,\!0\!}$   & $\!R\!$ & Time(s) 
    		\\
    		\hline
    		\hline
    		$1\!/\!16$     & 3.9474e-4   & -      & 4.6605e-2  & -     & 2.7357e-3  & -     & 8.1388
    		\\
    		$1\!/\!32$     & 2.8230e-5   & 3.8056   & 6.4712e-3  & 2.8484  & 7.0563e-4  & 1.9550  & 64.792
    		\\
    		$1\!/\!64$     & 2.1586e-6   & 3.7091   & 8.5120e-4  & 2.9265  & 1.7820e-4  & 1.9854  & 572.01
    		\\
    		$1\!/\!128$    & 1.9754e-7   & 3.4499   & 1.0916e-4  & 2.9630  & 4.4717e-5  & 1.9946  & 4482.08
    		\\
    		$1\!/\!256$    & 2.1700e-8   & 3.1864   & 1.3823e-5  & 2.9814  & 1.1180e-5  & 1.9999  & 36165.2
			\\
    		\hline
    	\end{tabular}
    	\label{table:Semi-DLN-23-Linf}
    \end{table}

	\begin{table}[ptbh]
    	\centering
    	\caption{$L^{2}$-norm of error and rate for semi-implicit DLN scheme ($\theta=2/3$)}
    	\begin{tabular}{lccccccc}
    		\hline
    		\hline
    		$k\!=\!h$  & $\| |u \!-\! u^{h}| \|_{\!2\!,\!0\!}$ & $\!R\!$ 
    		& $\| |u \!-\! u^{h}| \|_{\!2\!,\!1\!}$ & $\!R\!$  
    		& $\| |p \!-\! p^{h}| \|_{\!2\!,\!0\!}$   & $\!R\!$ & Time(s)
    		\\
    		\hline
    		\hline
    		$1\!/\!16$     & 2.3215e-4   & -      & 2.6161e-2  & -     & 1.3270e-3  & -     & 8.1388
    		\\
    		$1\!/\!32$     & 1.6575e-5   & 3.8080   & 3.3174e-3  & 2.9793  & 3.1602e-4  & 2.0700  & 64.792
    		\\
    		$1\!/\!64$     & 1.4593e-6   & 3.5057   & 4.1514e-4  & 2.9984  & 7.6861e-5  & 2.0397  & 572.01
    		\\
    		$1\!/\!128$    & 1.5827e-7   & 3.2048   & 5.1850e-5  & 3.0012  & 1.8945e-5  & 2.0204  & 4482.08
    		\\
    		$1\!/\!256$    & 1.9010e-8   & 3.0576   & 6.4763e-6  & 3.0011  & 4.7026e-6  & 2.0103  & 36165.2
    		\\
    		\hline
    	\end{tabular}
    	\label{table:Semi-DLN-23-L2}
    \end{table}

	\begin{table}[ptbh]
    	\centering
    	\caption{$L^{\infty}$-norm of error and rate for semi-implicit DLN scheme ($\theta=2/\sqrt{5}$)}
    	\begin{tabular}{lccccccc}
    		\hline
    		\hline
    		$k\!=\!h$  & $\| |u \!-\! u^{h}| \|_{\!\infty\!,\!0\!}$ & $\!R\!$ 
    		& $\| |u \!-\! u^{h}| \|_{\!\infty\!,\!1\!}$   & $\!R\!$  
    		& $\| |p \!-\! p^{h}| \|_{\!\infty\!,\!0\!}$   & $\!R\!$ & Time(s) 
    		\\
    		\hline
    		\hline
    		$1\!/\!16$     & 4.0528e-4   & -      & 4.7930e-2  & -     & 3.0208e-3  & -     & 8.3926
    		\\
    		$1\!/\!32$     & 2.9297e-5   & 3.7901   & 6.7547e-3  & 2.8270  & 7.8296e-4  & 1.9479  & 66.443
    		\\
    		$1\!/\!64$     & 2.2303e-6   & 3.7155   & 8.9557e-4  & 2.9150  & 1.9809e-4  & 1.9827  & 550.33
    		\\
    		$1\!/\!128$    & 2.0105e-7   & 3.4716   & 1.1533e-4  & 2.9570  & 4.9746e-5  & 1.9935  & 4670.8
    		\\
    		$1\!/\!256$    & 2.1835e-8   & 3.2029   & 1.4635e-5  & 2.9783  & 1.2439e-5  & 1.9997  & 36117.4
    		\\
    		\hline
    	\end{tabular}
    	\label{table:Semi-DLN-25-Linf}
    \end{table}

	\begin{table}[ptbh]
    	\centering
    	\caption{$L^{2}$-norm of error and rate for semi-implicit DLN scheme ($\theta=2/\sqrt{5}$)}
    	\begin{tabular}{lccccccc}
    		\hline
    		\hline
    		$k=h$  & $\| |u \!-\! u^{h}| \|_{\!2\!,\!0\!}$ & $\!R\!$ 
    		& $\| |u \!-\! u^{h}| \|_{\!2\!,\!1\!}$ & $\!R\!$  
    		& $\| |p \!-\! p^{h}| \|_{\!2\!,\!0\!}$   & $\!R\!$ & Time(s)
    		\\
    		\hline
    		\hline
    		$1\!/\!16$     & 2.3063e-4   & -      & 2.5960e-2  & -     & 1.4135e-3  & -     & 8.3926
    		\\
    		$1\!/\!32$     & 1.6544e-5   & 3.8012   & 3.3095e-3  & 2.9716  & 3.2890e-4  & 2.1036  & 66.443
    		\\
    		$1\!/\!64$     & 1.4587e-6   & 3.5036   & 4.1515e-4  & 2.9949  & 7.8584e-5  & 2.0653  & 550.33
    		\\
    		$1\!/\!128$    & 1.5816e-7   & 3.2052   & 5.1876e-5  & 3.0005  & 1.9165e-5  & 2.0357  & 4670.8
    		\\
    		$1\!/\!256$    & 1.8980e-8   & 3.0588   & 6.4789e-6  & 3.0012  & 4.7299e-6  & 2.0186  & 36117.4
    		\\
    		\hline
    	\end{tabular}
    	\label{table:Semi-DLN-25-L2}
    \end{table}

	\begin{table}[ptbh]
    	\centering
    	\caption{$L^{\infty}$-norm of error and rate for semi-implicit DLN scheme ($\theta=1$)}
    	\begin{tabular}{lccccccc}
    		\hline
    		\hline
    		$k\!=\!h$  & $\| |u \!-\! u^{h}| \|_{\!\infty\!,\!0\!}$ & $\!R\!$ 
    		& $\| |u \!-\! u^{h}| \|_{\!\infty\!,\!1\!}$   & $\!R\!$  
    		& $\| |p \!-\! p^{h}| \|_{\!\infty\!,\!0\!}$   & $\!R\!$ & Time(s) 
    		\\
    		\hline
    		\hline
    		$1\!/\!16$     & 4.1609e-4   & -      & 4.9275e-2  & -     & 3.2988e-3  & -     & 8.1731
    		\\
    		$1\!/\!32$     & 3.0244e-5   & 3.7821   & 7.0048e-3  & 2.8144  & 8.5923e-4  & 1.9408  & 65.027
    		\\
    		$1\!/\!64$     & 2.2919e-6   & 3.7220   & 9.3311e-4  & 2.9082  & 2.1772e-4  & 1.9806  & 527.63
    		\\
    		$1\!/\!128$    & 2.0408e-7   & 3.4893   & 1.2046e-4  & 2.9535  & 5.4716e-5  & 1.9924  & 4598.7
    		\\
    		$1\!/\!256$    & 2.1951e-8   & 3.2168   & 1.5306e-5  & 2.9764  & 1.3687e-5  & 1.9991  & 36536.1
    		\\
    		\hline
    	\end{tabular}
    	\label{table:Semi-DLN-1-Linf}
    \end{table}

	\begin{table}[ptbh]
    	\centering
    	\caption{$L^{2}$-norm of error and rate for semi-implicit DLN scheme ($\theta=1$)}
    	\begin{tabular}{lccccccc}
    		\hline
    		\hline
    		$k=h$  & $\| |u \!-\! u^{h}| \|_{\!2\!,\!0\!}$ & $\!R\!$ 
    		& $\| |u \!-\! u^{h}| \|_{\!2\!,\!1\!}$  & $\!R\!$  
    		& $\| |p \!-\! p^{h}| \|_{\!2\!,\!0\!}$  & $\!R\!$ & Time(s)
    		\\
    		\hline
    		\hline
    		$1\!/\!16$     & 2.3657e-4   & -      & 2.6577e-2  & -     & 3.0674e-3  & -     & 8.1731
    		\\
    		$1\!/\!32$     & 1.6833e-5   & 3.8129   & 3.3987e-3  & 2.9672  & 8.1328e-4  & 1.9152  & 65.027
    		\\
    		$1\!/\!64$     & 1.4761e-6   & 3.5115   & 4.3047e-4  & 2.9810  & 2.0835e-4  & 1.9648  & 527.63
    		\\
    		$1\!/\!128$    & 1.5887e-7   & 3.2159   & 5.4163e-5  & 2.9905  & 5.2629e-5  & 1.9851  & 4598.7
    		\\
    		$1\!/\!256$    & 1.8995e-8   & 3.0642   & 6.7926e-6  & 2.9953  & 1.3195e-5  & 1.9959  & 36536.1
    		\\
    		\hline
    	\end{tabular}
    	\label{table:Semi-DLN-1-L2}
    \end{table}
	\begin{table}[ptbh]
    	\centering
    	\caption{$L^{\infty}$-norm of error and rate for fully-implicit DLN scheme ($\theta=2/3$)}
    	\begin{tabular}{lccccccc}
    		\hline
    		\hline
    		$k\!=\!h$  & $\| |u\! - \!u^{h}| \|_{\!\infty\!,\!0\!}$ & $\!R\!$ 
    		& $\| |u \!- \!u^{h}| \|_{\!\infty\!,\!1\!}$   & $\!R\!$  
    		& $\| |p \!-\! p^{h}| \|_{\!\infty\!,\!0\!}$   & $\!R\!$ & Time(s) 
    		\\
    		\hline
    		\hline
    		$1\!/\!16$     & 3.9446e-4   & -        & 4.6505e-2  & -       & 2.7694e-3  & -     & 15.605
    		\\
    		$1\!/\!32$     & 2.8227e-5   & 3.8047   & 6.4700e-3  & 2.8455  & 7.1375e-4  & 1.9561  & 125.958
    		\\
    		$1\!/\!64$     & 2.1586e-6   & 3.7089   & 8.5118e-4  & 2.9262  & 1.8025e-4  & 1.9854  & 1032.9
    		\\
    		$1\!/\!128$    & 1.9754e-7   & 3.4499   & 1.0916e-4  & 2.9630  & 4.5231e-5  & 1.9946  & 8796.17
    		\\
    		$1\!/\!256$    & 2.1700e-8   & 3.1864   & 1.3823e-5  & 2.9814  & 1.1309e-5  & 1.9999  & 71983.6
			\\
    		\hline
    	\end{tabular}
    	\label{table:Full-DLN-23-Linf}
    \end{table}

	\begin{table}[ptbh]
    	\centering
    	\caption{$L^{2}$-norm of error and rate for fully-implicit DLN scheme ($\theta=2/3$)}
    	\begin{tabular}{lccccccc}
    		\hline
    		\hline
    		$k\!=\!h$  & $\| |u \!-\! u^{h}| \|_{\!2\!,\!0\!}$ & $\!R\!$ 
    		& $\| |u \!-\! u^{h}| \|_{\!2\!,\!1\!}$   & $\!R\!$  
    		& $\| |p \!-\! p^{h}| \|_{\!2\!,\!0\!}$   & $\!R\!$ & Time(s)
    		\\
    		\hline
    		\hline
    		$1\!/\!16$     & 2.3156e-4   & -      & 2.6098e-2  & -     & 1.3308e-3  & -     & 15.605
    		\\
    		$1\!/\!32$     & 1.6574e-5   & 3.8044   & 3.3170e-3  & 2.9760  & 3.1649e-4  & 2.0721  & 125.958
    		\\
    		$1\!/\!64$     & 1.4593e-6   & 3.5055   & 4.1514e-4  & 2.9982  & 7.6911e-5  & 2.0409  & 1032.9
    		\\
    		$1\!/\!128$    & 1.5827e-7   & 3.2048   & 5.1850e-5  & 3.0011  & 1.8948e-5  & 2.0211  & 8796.17
    		\\
    		$1\!/\!256$    & 1.9009e-8   & 3.0576   & 6.4763e-6  & 3.0011  & 4.7022e-6  & 2.0107  & 71983.6
    		\\
    		\hline
    	\end{tabular}
    	\label{table:Full-DLN-23-L2}
    \end{table}
	
	\begin{table}[ptbh]
    	\centering
    	\caption{$L^{\infty}$-norm of error and rate for fully-implicit DLN scheme ($\theta=2/\sqrt{5}$)}
    	\begin{tabular}{lccccccc}
    		\hline
    		\hline
    		$k\!=\!h$  & $\| |u \!-\! u^{h}| \|_{\!\infty\!,\!0}$ & $\!R\!$ 
    		& $\| |u \!-\! u^{h}| \|_{\!\infty\!,\!1\!}$   & $\!R\!$  
    		& $\| |p \!-\! p^{h}| \|_{\!\infty\!,\!0\!}$   & $\!R\!$ & Time(s) 
    		\\
    		\hline
    		\hline
    		$1\!/\!16$     & 4.0498e-4   & -      & 4.7831e-2  & -     & 3.0555e-3  & -     & 18.7007
    		\\
    		$1\!/\!32$     & 2.9293e-5   & 3.7892   & 6.7535e-3  & 2.8243  & 7.9133e-4  & 1.9490  & 149.992
    		\\
    		$1\!/\!64$     & 2.2302e-6   & 3.7153   & 8.9555e-4  & 2.9148  & 2.0021e-4  & 1.9828  & 1192.33
    		\\
    		$1\!/\!128$    & 2.0105e-7   & 3.4715   & 1.1533e-4  & 2.9570  & 5.0276e-5  & 1.9936  & 9766.42
    		\\
    		$1\!/\!256$    & 2.1835e-8   & 3.2029   & 1.4635e-5  & 2.9783  & 1.2572e-5  & 1.9997  & 71773.7
    		\\
    		\hline
    	\end{tabular}
    	\label{table:Full-DLN-25-Linf}
    \end{table}

	\begin{table}[ptbh]
    	\centering
    	\caption{$L^{2}$-norm of error and rate for fully-implicit DLN scheme ($\theta=2/\sqrt{5}$)}
    	\begin{tabular}{lccccccc}
    		\hline
    		\hline
    		$k\!=\!h$  & $\| |u \!-\! u^{h}| \|_{\!2\!,\!0\!}$ & $\!R\!$ 
    		& $\| |u \!-\! u^{h}| \|_{\!2\!,\!1\!}$   & $\!R\!$  
    		& $\| |p \!-\! p^{h}| \|_{\!2\!,\!0\!}$   & $\!R\!$ & Time(s)
    		\\
    		\hline
    		\hline
    		$1\!/\!16$     & 2.2951e-4   & -      & 2.5844e-2  & -     & 1.4177e-3  & -     & 18.7007
    		\\
    		$1\!/\!32$     & 1.6541e-5   & 3.7944   & 3.3086e-3  & 2.9655  & 3.2947e-4  & 2.1053  & 149.992
    		\\
    		$1\!/\!64$     & 1.4587e-6   & 3.5033   & 4.1514e-4  & 2.9945  & 7.8653e-5  & 2.0666  & 1192.33
    		\\
    		$1\!/\!128$    & 1.5816e-7   & 3.2052   & 5.1876e-5  & 3.0005  & 1.9172e-5  & 2.0365  & 9766.42
    		\\
    		$1\!/\!256$    & 1.8980e-8    & 3.0588   & 6.4789e-6  & 3.0012  & 4.7302e-6  & 2.0190  & 71773.7
    		\\
    		\hline
    	\end{tabular}
    	\label{table:Full-DLN-25-L2}
    \end{table}

	\begin{table}[ptbh]
    	\centering
    	\caption{$L^{\infty}$-norm of error and rate for fully-implicit DLN scheme ($\theta=1$)}
    	\begin{tabular}{lccccccc}
    		\hline
    		\hline
    		$k\!=\!h$  & $\| |u \!-\! u^{h}| \|_{\!\infty\!,\!0\!}$ & $\!R\!$ 
    		& $\| |u \!-\! u^{h}| \|_{\!\infty\!,\!1\!}$   & $\!R\!$  
    		& $\| |p \!-\! p^{h}| \|_{\!\infty\!,\!0\!}$   & $\!R\!$ & Time(s) 
    		\\
    		\hline
    		\hline
    		$1\!/\!16$     & 4.1578e-4   & -        & 4.9177e-2  & -     & 3.3032e-3  & -     & 18.9342
    		\\
    		$1\!/\!32$     & 3.0240e-5   & 3.7813   & 7.0035e-3  & 2.8118  & 8.5935e-4  & 1.9425  & 157.133
    		\\
    		$1\!/\!64$     & 2.2919e-6   & 3.7219   & 9.3310e-4  & 2.9080  & 2.1773e-4  & 1.9807  & 1214.16
    		\\
    		$1\!/\!128$    & 2.0408e-7   & 3.4893   & 1.2046e-4  & 2.9534  & 5.4718e-5  & 1.9925  & 8492.7
    		\\
    		$1\!/\!256$    & 2.1951e-8   & 3.2168   & 1.5306e-5  & 2.9764  & 1.3687e-5  & 1.9992  & 71309.7
    		\\
    		\hline
    	\end{tabular}
    	\label{table:Full-DLN-1-Linf}
    \end{table}

	\begin{table}[ptbh]
    	\centering
    	\caption{$L^{2}$-norm of error and rate for fully-implicit DLN scheme ($\theta=1$)}
    	\begin{tabular}{lccccccc}
    		\hline
    		\hline
    		$k\!=\!h$  & $\| |u \!-\! u^{h}| \|_{\!2\!,\!0\!}$ & $\!R\!$ 
    		& $\| |u \!-\! u^{h}| \|_{\!2\!,\!1\!}$ & $\!R\!$  
    		& $\| |p \!-\! p^{h}| \|_{\!2\!,\!0\!}$   & $\!R\!$ & Time(s)
    		\\
    		\hline
    		\hline
    		$1\!/\!16$     & 2.3141e-4   & -      & 2.6100e-2  & -     & 3.0611e-3  & -     & 18.9342
    		\\
    		$1\!/\!32$     & 1.6803e-5   & 3.7837   & 3.3893e-3  & 2.9450  & 8.1693e-4  & 1.9058  & 157.133
    		\\
    		$1\!/\!64$     & 1.4757e-6   & 3.5092   & 4.3010e-4  & 2.9782  & 2.0943e-4  & 1.9637  & 1214.16
    		\\
    		$1\!/\!128$    & 1.5887e-7   & 3.2155   & 5.4145e-5  & 2.9898  & 5.2906e-5  & 1.9850  & 8492.7
    		\\
    		$1\!/\!256$    & 1.8995e-8   & 3.0641   & 6.7917e-6  & 2.9949  & 1.3264e-5  & 1.9959  & 71309.7
    		\\
    		\hline
    	\end{tabular}
    	\label{table:Full-DLN-1-L2}
    \end{table}

	\subsection{Adaptive DLN Algorithms for Revised Taylor-Green Problem}
	\label{subsec:test2}
	We apply adaptive semi-implicit DLN algorithms in Section \ref{sec:Implement-DLN} to the revised Taylor-Green problem in the domain $\Omega = [0,1] \times [0,1]$. The exact solutions are
	\begin{gather}
	\label{eq:Taylor-Green-Revised}
	u_{1}(x,y,t) = -\cos(\omega \pi x) \sin(\omega \pi y) \exp ( 2 \omega^2 \pi^2 t/\tau ),  \\
	u_{2}(x,y,t) = \sin(\omega \pi x) \cos(\omega \pi y) \exp ( 2 \omega^2 \pi^2 t/\tau ),  \notag \\
	p(x,y,t) = - \frac{1}{4} \big( \cos(2 \omega \pi x) + \cos(2 \omega \pi y) \big) 
	\exp ( 4 \omega^2 \pi^2 t/\tau ).    \notag
	\end{gather}
	We set $\omega = 1$ and $\tau = 1/\nu = 2500$.
	The exact solutions in \eqref{eq:Taylor-Green-Revised} make the problem more difficult since the 
	Reynolds number is much larger and the energy has an increasing pattern.
	We use both Algorithm \ref{alg:Adaptivity-AB2-like} and Algorithm \ref{alg:Adaptivity-ND} to solve the problem over the time interval $[0,60]$. 
	For Algorithm \ref{alg:Adaptivity-AB2-like}, we use the relative estimator in \eqref{eq:ESTrel-AB2DLN} and set tolerance $\rm{Tol} = 1.\rm{e}-7$ and the safety factor $\kappa = 0.95$. 
	For Algorithm \ref{alg:Adaptivity-ND}, we set $\rm{Tol} = 1.\rm{e}-14$ for $\chi$. The value of $\rm{Tol}$ is chosen to balance accuracy and efficiency.
	For both algorithms, we set the minimum time step $k_{\rm{min}} = 0.0005$, the maximum time step 
	$k_{\rm{max}} = 0.05$, the initial time step $k_{0} = 0.0005$ and the mesh diameter $h = 1/180$. 
	The initial value, boundary value and body force are decided by the exact solutions. We measure the performance of two algorithms by evaluating energy, error of energy, numerical 
	dissipation $\mathcal{E}_{n+1}^{\tt ND}$ and viscosity $\mathcal{E}_{n+1}^{\tt VD}$. 
	Since $\mathcal{E}_{n+1}^{\tt ND}$ vanishes for the DLN method with $\theta = 1$, we test the two adaptive algorithms with $\theta = 2/3$ and $\theta = 2/\sqrt{5}$.  
	\cref{fig:AdaptDLN-Test2} shows the performance of two adaptive DLN algorithms 
	and \cref{table:AdaptDLN-Test2} tells us the number of steps. 

	Algorithm \ref{alg:Adaptivity-AB2-like} surpasses Algorithm \ref{alg:Adaptivity-ND} in terms of accuracy and efficiency: \cref{fig:Test2_Error} shows that the error magnitude of energy is much smaller for Algorithm \ref{alg:Adaptivity-AB2-like} while Algorithm \ref{alg:Adaptivity-ND} takes more number of time steps for both $\theta$ values.
	Two algorithms have similar patterns for numerical dissipation and viscosity in \cref{fig:Test2_ND,fig:Test2_VD}. 
	For both algorithms, $\widehat{T}_{n+1}$ and $\chi_{n+1}$ are kept below the required tolerance after the first few steps in \cref{fig:Test2_Tol} and time steps oscillate between $k_{\rm{max}}$ and $k_{\rm{min}}$ in \cref{fig:Test2_Step}.
	\begin{figure}[ptbh]
		\centering
		\subfigure[Energy $\frac{1}{2} \| u_{n+1}^{h} \|^{2}$]{ \label{fig:Test2_Energy}
			\hspace{-0.99cm}
			\begin{minipage}[t]{0.5\linewidth}
				\centering
				\includegraphics[width=2.7in,height=2.0in]{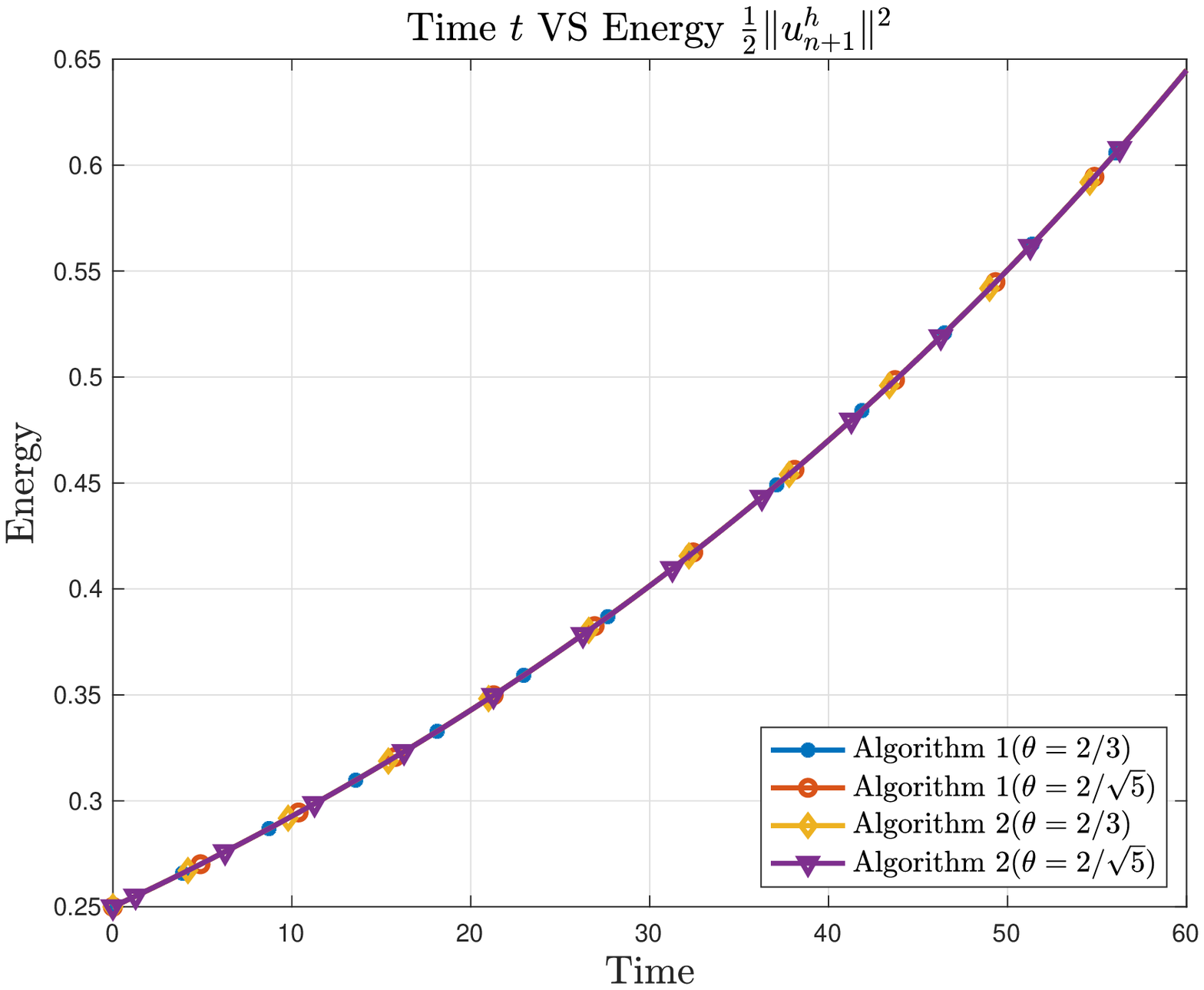}\\
				\vspace{0.02cm}
			\end{minipage}
			\quad}%
		\subfigure[$\log_{10} \big( \frac{1}{2} \big|\| u_{n+1}^{h} \|^{2} - \| u_{n+1} \|^{2} \big| \big) $]{ \label{fig:Test2_Error}
			\hspace{-0.99cm}
			\begin{minipage}[t]{0.5\linewidth}
				\centering
				\includegraphics[width=2.7in,height=2.0in]{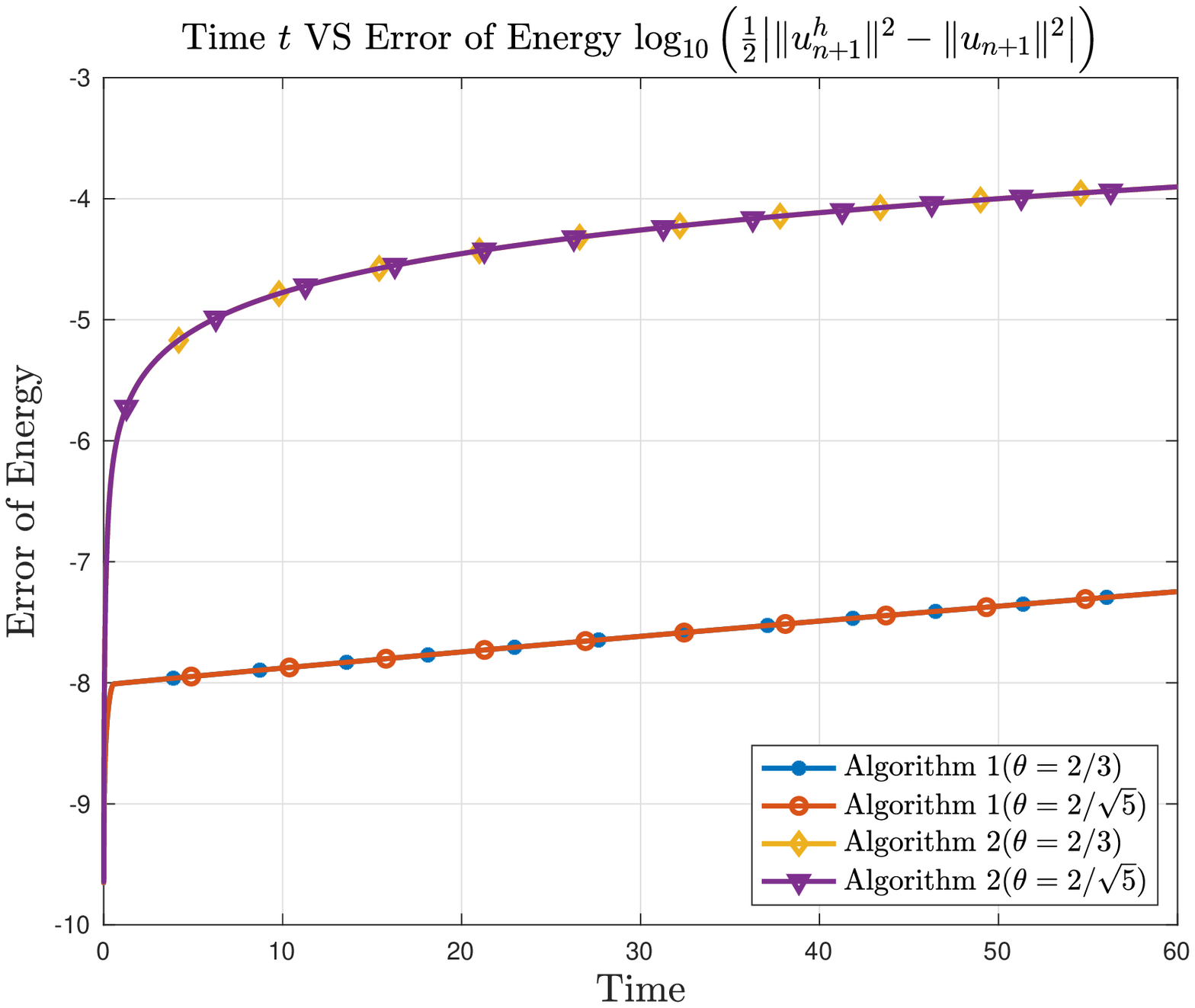}\\
				\vspace{0.02cm}
		\end{minipage}}  	
		\subfigure[Numerical Dissipation: $\log_{10}(\mathcal{E}_{n+1}^{\tt ND})$]{ \label{fig:Test2_ND}
			\hspace{-0.99cm}
			\begin{minipage}[t]{0.5\linewidth}
				\centering
				\includegraphics[width=2.7in,height=2.0in]{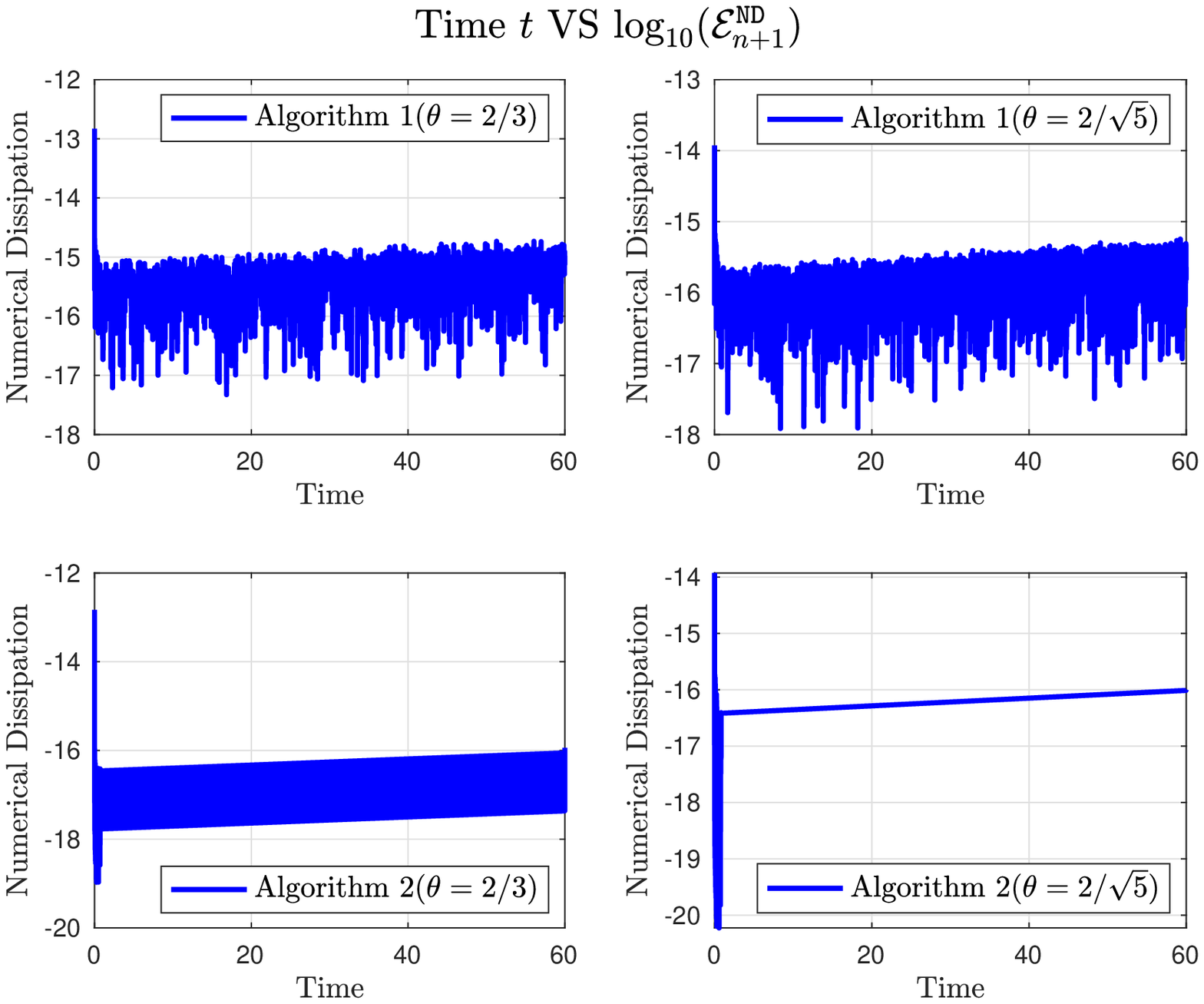}\\
				\vspace{0.02cm}
			\end{minipage}
			\quad}%
		\subfigure[Viscosity: $\mathcal{E}_{n+1}^{\tt VD}$]{ \label{fig:Test2_VD}
			\hspace{-0.99cm}
			\begin{minipage}[t]{0.5\linewidth}
				\centering
				\includegraphics[width=2.7in,height=2.0in]{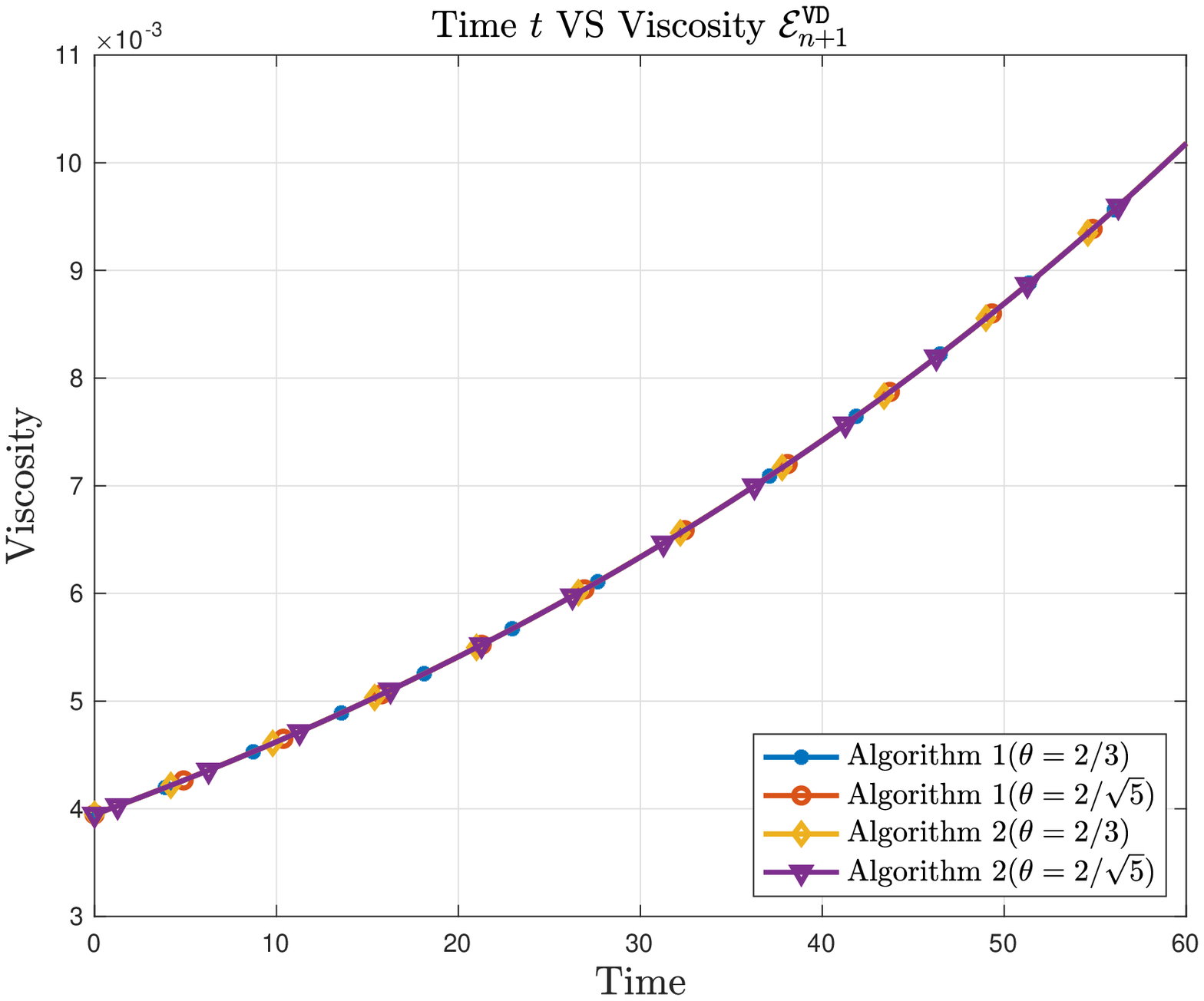}\\
				\vspace{0.02cm}
		\end{minipage}}
		\subfigure[$\log_{10}(\widehat{T}_{n+1})$ and $\log_{10}(\chi_{n+1})$]{ \label{fig:Test2_Tol}
			\hspace{-0.99cm}
			\begin{minipage}[t]{0.5\linewidth}
				\centering
				\includegraphics[width=2.7in,height=2.0in]{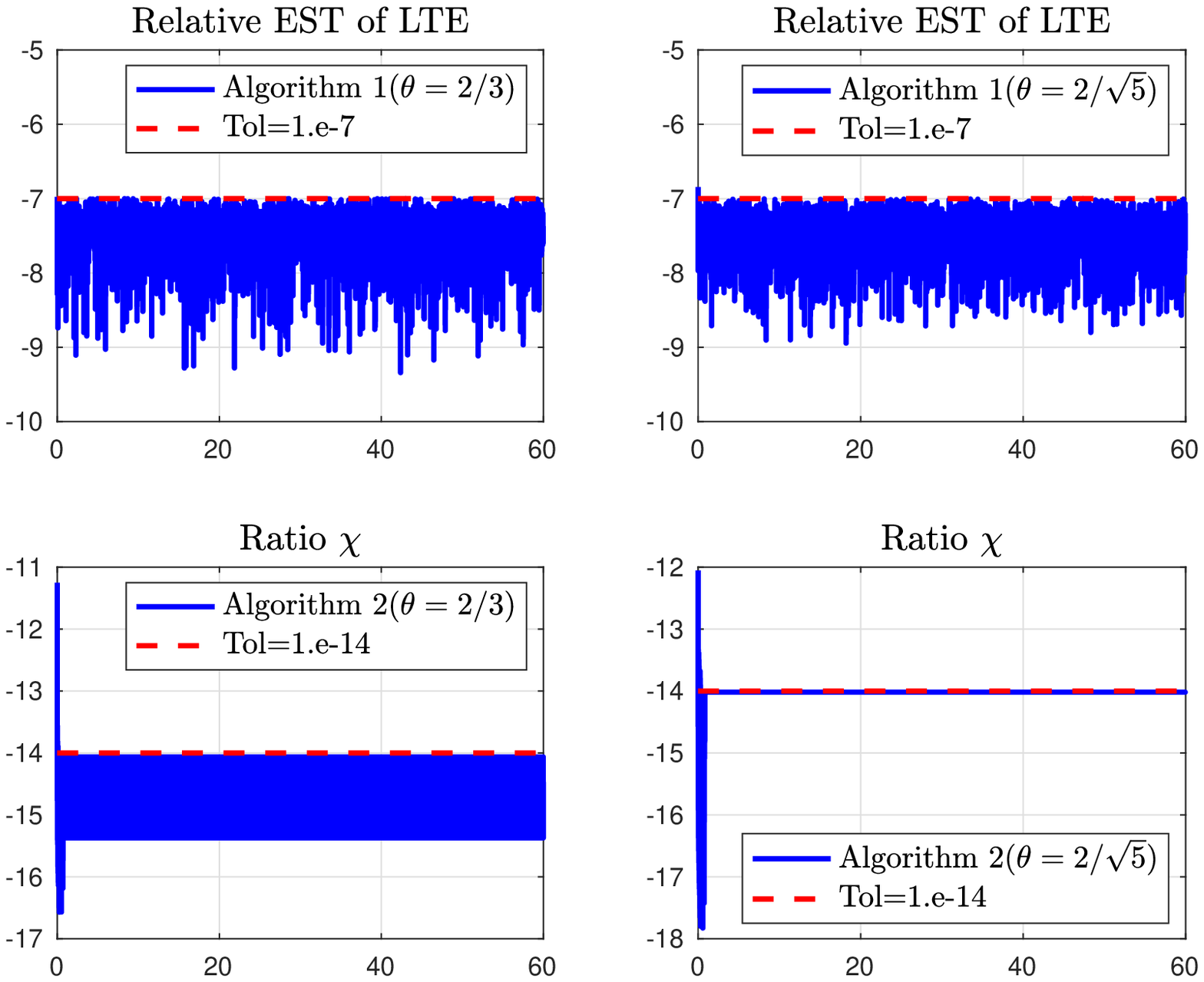}\\
				\vspace{0.02cm}
			\end{minipage}
			\quad}%
		\subfigure[Time step: $\log_{10}(k_{n})$]{ \label{fig:Test2_Step}
			\hspace{-0.99cm}
			\begin{minipage}[t]{0.5\linewidth}
				\centering
				\includegraphics[width=2.7in,height=2.0in]{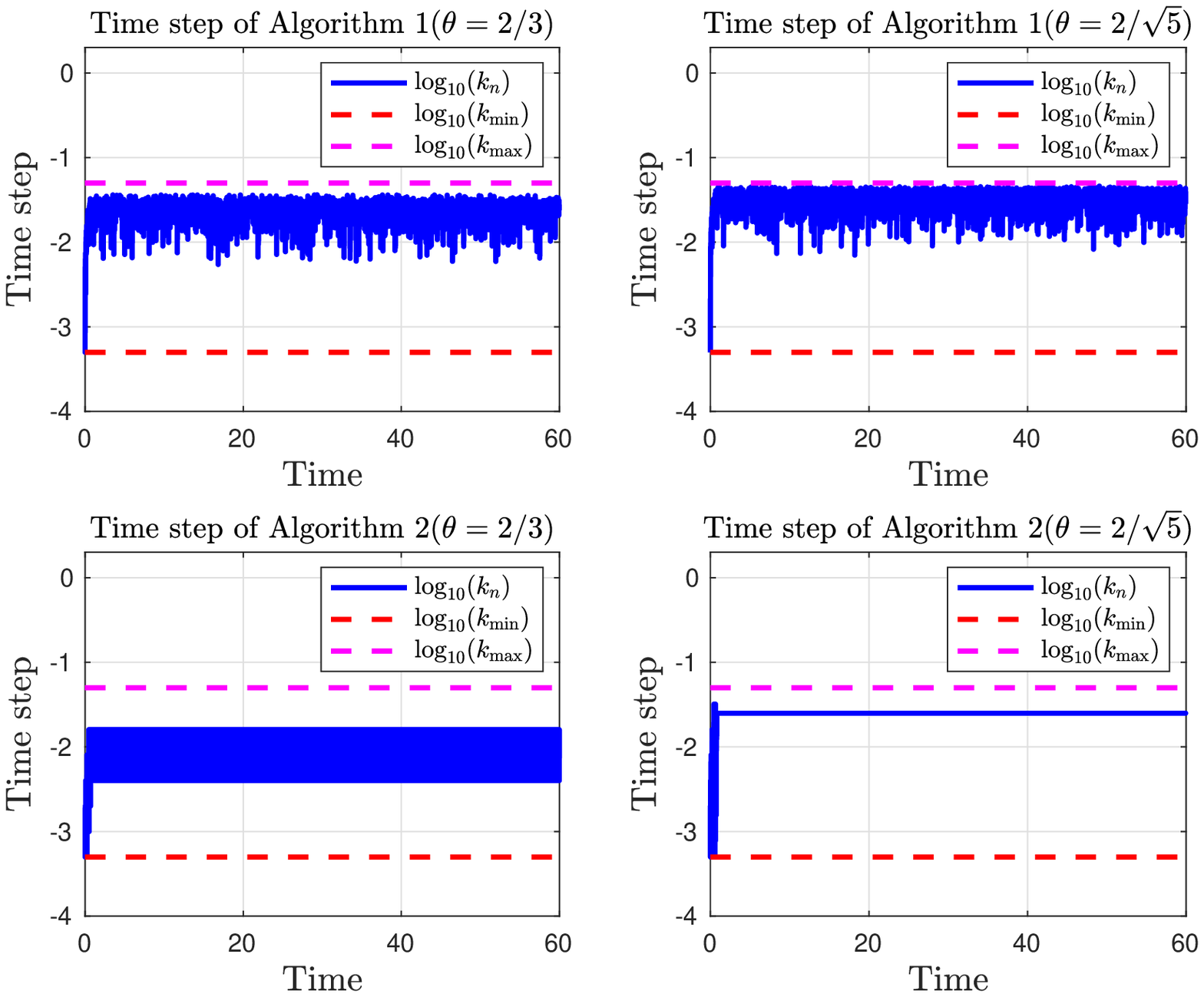}\\
				\vspace{0.02cm}
		\end{minipage}}%
		\par
		\centering
		\vspace{-0.2cm}
		\caption{Algorithm \ref{alg:Adaptivity-AB2-like} has a much smaller error magnitude of energy. 
		The two algorithms have similar patterns for numerical dissipation and viscosity.
		For both algorithms,
		$\widehat{T}_{n+1}$ and $\chi_{n+1}$ are kept below the required tolerance after the first few steps and
		time steps oscillate between $k_{\rm{max}}$ and $k_{\rm{min}}$.}
		\label{fig:AdaptDLN-Test2}
	\end{figure}

	\begin{table}[ptbh]
    	\centering
    	\caption{Algorithm \ref{alg:Adaptivity-AB2-like} takes fewer time steps to simulate the revised Taylor-Green problem than Algorithm \ref{alg:Adaptivity-ND}.}
    	\begin{tabular}{ccc}
    		\hline
    		\hline
    		& Algorithm \ref{alg:Adaptivity-AB2-like} 
    		& Algorithm \ref{alg:Adaptivity-ND}  
    		\\
    		\hline
    		\hline
			$\theta =2/3$         & 2566  & 6580  
			\\
    		\hline
    		$\theta = 2/\sqrt{5}$  & 1967   & 2550  
    		\\
    		\hline
			\hline
    	\end{tabular}
    	\label{table:AdaptDLN-Test2}
    \end{table}

	\subsection{2D Offset Circles Problem}
	We use the 2D offset circles problem proposed by Jiang and Layton \cite{JL14_IJUQ} to verify the stability of the DLN scheme under any arbitrary sequence of time steps and the efficiency of the adaptive algorithms in Section \ref{sec:Implement-DLN}.
	The domain $\Omega \subset \mathbb{R}^{2}$ is  
	\begin{gather*}
		\Omega = \{ (x,y): x^2 + y^2 \leq 1 \text{ and  } (x - 0.5)^{2} + y^2 \geq 0.01 \}.
	\end{gather*}
	The flow in the domain $\Omega$ is driven by the rotational body force 
	\begin{gather*}
	    f(x,y,t) = 
		\begin{bmatrix}
			f_{1}(x,y) \\
			f_{2}(x,y)
		\end{bmatrix}
		=
		\begin{bmatrix}
			-4y(1 - x^2 - y^2) \\
			 4x(1 - x^2 - y^2)
		\end{bmatrix}.
	\end{gather*}
	with the no-slip boundary condition on both circles. 
	We set the Reynolds number $\rm{Re} = 1/\nu = 200$ and simulate the problem over time interval $[0,60]$.
	We use the relative estimator of LTE in \eqref{eq:ESTrel-AB2DLN} for Algorithm \ref{alg:Adaptivity-AB2-like} and set the tolerance $\rm{Tol} = 0.001$ and safety factor $\kappa = 0.95$. 
	For Algorithm \ref{alg:Adaptivity-ND}, we set $\rm{Tol} = 0.01$ for ratio $\chi_{n+1}$. 
	For both adaptive algorithms, the initial time step $k_{0} = 0.005$, the maximum time 
	step $k_{\rm{max}} = 0.05$ and the minimum time step $k_{\rm{min}} = 0.0005$. 
	The domain triangulation is generated by 80 nodes on the boundary of the inner circle and 320 nodes on the boundary of the outer circle.
	Since the exact solutions are unknown, we use the constant step DLN algorithm ($\varepsilon_{n} = 0$) with a small time step ($k=k_{\rm{min}}$) and refined mesh (100 nodes on the boundary of inner circle and 400 nodes on the boundary of outer circle) for reference.
	\cref{fig:Domain_2Doffset_Adaptive,fig:Domain_2Doffset_Constant} show two domain triangulations.
	The number of time steps is presented in \cref{table:AdaptDLN-Test3}.

	\cref{fig:Test3_En} shows that the energy of all algorithms is increasing at the start and then come to the steady level 23 at time $t = 8$. 
	We deduce that Algorithm \ref{alg:Adaptivity-ND}($\theta = 2/\sqrt{5}$) has worse performance for this problem because the energy level of this algorithm is low compared to that of other adaptive algorithms. 
	In addition, the number of time steps is least for Algorithm \ref{alg:Adaptivity-ND} ($\theta = 2/\sqrt{5}$) while the ratio $\chi$ goes above the required tolerance value ($1.\rm{e}-2$) for many times in the simulation. 
	Then we compare \cref{fig:Test3_En_Alg1_DLN23,fig:Test3_En_Alg1_DLN25,fig:Test3_En_Alg2_DLN23} and observe that the energy of Algorithm \ref{alg:Adaptivity-AB2-like}($\theta = 2/\sqrt{5}$) is closer to the energy of reference algorithms with less number of time steps. 
	From \cref{fig:Test3_ND}, we can see that the numerical dissipation of Algorithm \ref{alg:Adaptivity-AB2-like} is at a level as low as that of reference algorithms while that of Algorithm \ref{alg:Adaptivity-ND} is much larger. 
	All the algorithms have similar viscosity patterns in \cref{fig:Test3_VD}. 
	From \cref{fig:Test3_Tol,fig:Test3_Step}, $\widehat{T}$ is always below the required 
	tolerance ($1.\rm{e}-3$) thus the time steps of Algorithm \ref{alg:Adaptivity-AB2-like} never reach $k_{\rm{min}}$. However the ratio $\chi$ goes above the required tolerance $1.\rm{e}-2$ frequently and $k_{n} = k_{\rm{min}}$ occurs very often. The primitive time step controller (doubling and halving time steps) in Algorithm \ref{alg:Adaptivity-AB2-like} reduces the number of time steps and may cause inaccuracy. 

	\begin{figure}[ptbh]
		\subfigure[80 nodes on the inner circle and 320 nodes \\ on the outer circle for adaptive algorithms]{ \label{fig:Domain_2Doffset_Adaptive}
			\hspace{-0.2cm}
			\begin{minipage}[t]{0.45\linewidth}
				\centering
				\includegraphics[width=2.65in,height=1.8in]{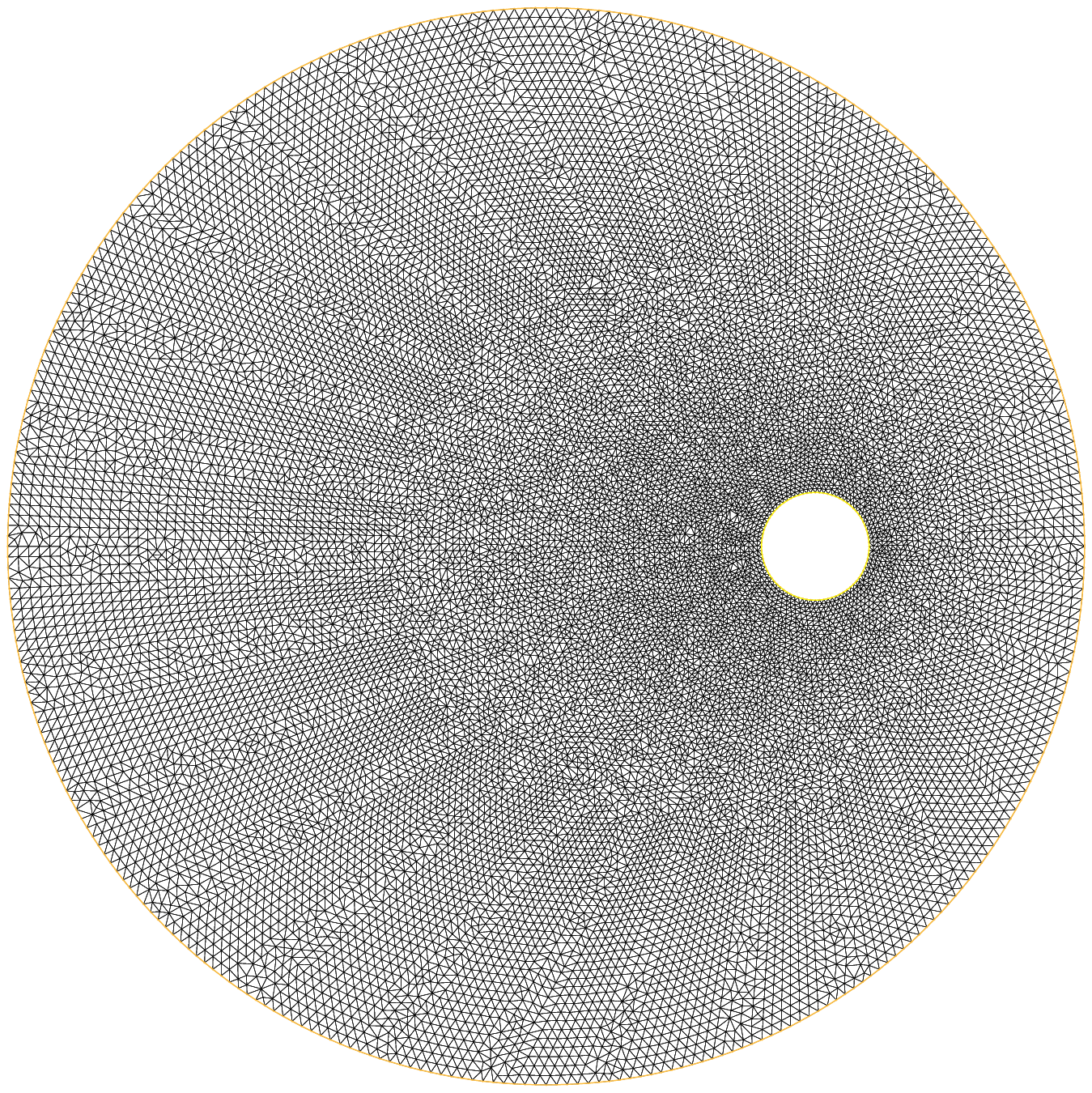}\\
				\vspace{0.02cm}
			\end{minipage}
			\quad}%
		\subfigure[100 nodes on the inner circle and 400 nodes \\ on the outer circle for constant algorithms]{ \label{fig:Domain_2Doffset_Constant}
			\hspace{-0.2cm}
			\begin{minipage}[t]{0.45\linewidth}
				\centering
				\includegraphics[width=2.65in,height=1.8in]{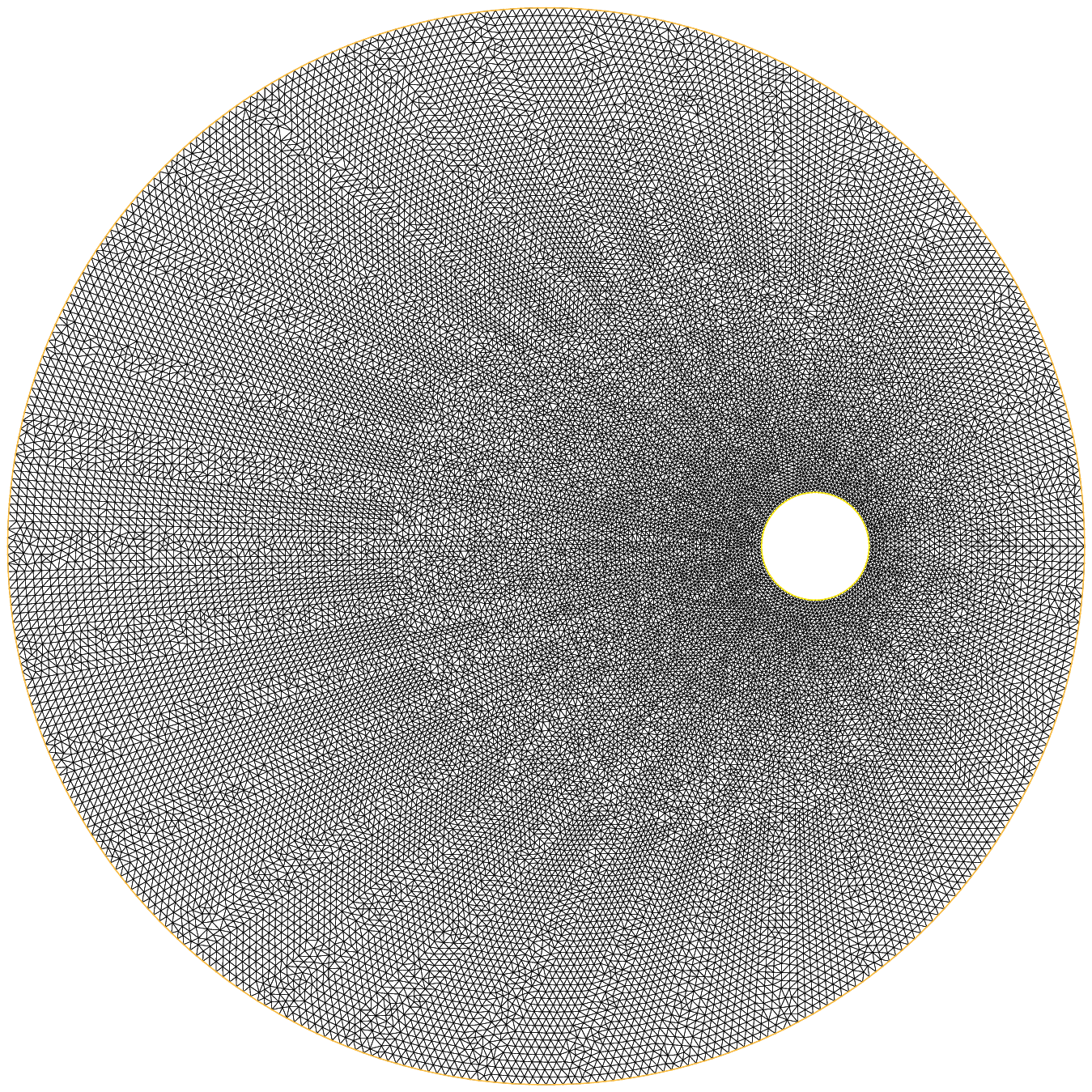}\\
				\vspace{0.02cm}
		\end{minipage}}  	
		\par
		\centering
		\vspace{-0.2cm}
		\caption{Domain triangulations for adaptive DLN algorithms and constant DLN algorithms 
		}
		\label{fig:Domain_2Doffset}
	\end{figure}
	\begin{table}[ptbh]
    	\centering
    	\caption{Algorithm \ref{alg:Adaptivity-AB2-like} takes fewer time steps to simulate the 2D offset Problem than algorithm \ref{alg:Adaptivity-ND}.}
    	\begin{tabular}{cccc}
    		\hline
    		\hline
    		& Algorithm \ref{alg:Adaptivity-AB2-like} 
    		& Algorithm \ref{alg:Adaptivity-ND}  
			& Constant step DLN 
    		\\
    		\hline
    		\hline
			$\theta =2/3$          & 64577   & 5933  & 120000
			\\
    		\hline
    		$\theta = 2/\sqrt{5}$  & 49175   & 3767  & 120000
    		\\
    		\hline
			\hline
    	\end{tabular}
    	\label{table:AdaptDLN-Test3}
    \end{table}

	\begin{figure}[ptbh]
		\centering
		\subfigure[Energy of Algorithm \ref{alg:Adaptivity-AB2-like}, Algorithm \ref{alg:Adaptivity-ND} and reference algorithms]{ \label{fig:Test3_En}
			\begin{minipage}[t]{0.999\linewidth}
				\centering
				\includegraphics[width=5.2in,height=1.5in]{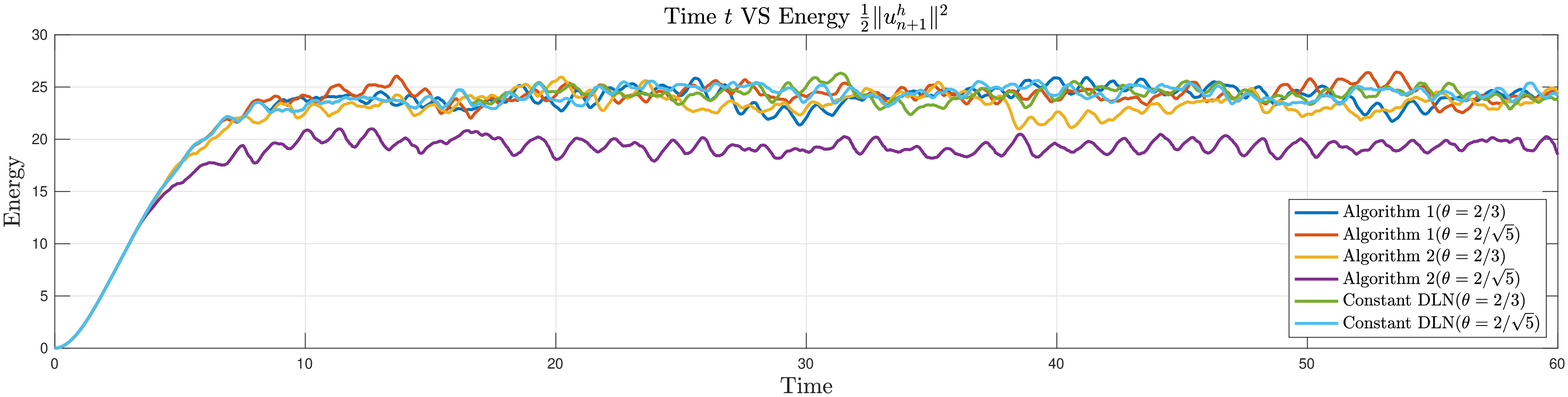}\\
				\vspace{0.02cm}
			\end{minipage}
			}%

		\subfigure[Energy of Algorithm \ref{alg:Adaptivity-AB2-like}($\theta=\frac{2}{3}$) and reference algorithms]{ \label{fig:Test3_En_Alg1_DLN23}
			\begin{minipage}[t]{0.999\linewidth}
				\centering
				\includegraphics[width=5.2in,height=1.5in]{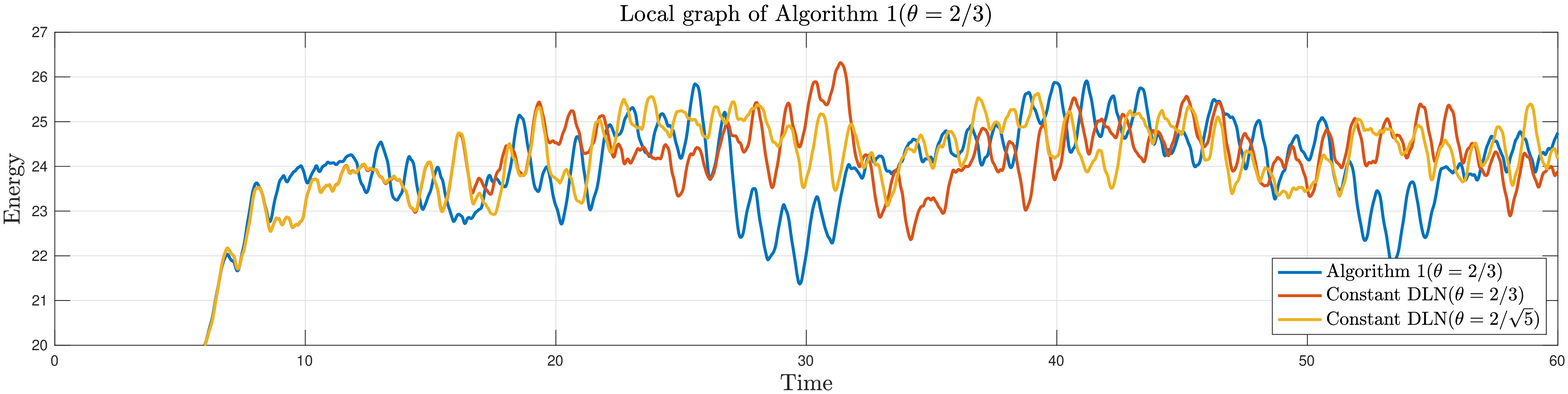}\\
				\vspace{0.02cm}
		\end{minipage}}  	

		\subfigure[Energy of Algorithm \ref{alg:Adaptivity-AB2-like}($\theta=\frac{2}{\sqrt{5}}$) and reference algorithms]{ \label{fig:Test3_En_Alg1_DLN25}
			\begin{minipage}[t]{0.999\linewidth}
				\centering
				\includegraphics[width=5.2in,height=1.5in]{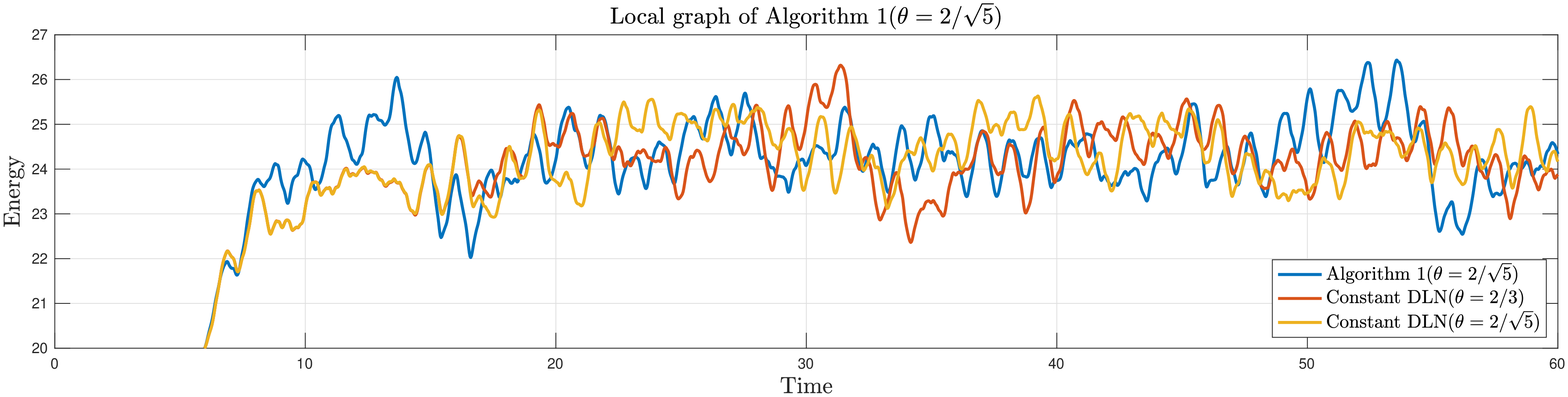}\\
				\vspace{0.02cm}
			\end{minipage}
			}%

		\subfigure[Algorithm \ref{alg:Adaptivity-ND} ($\theta=\frac{2}{3}$) and reference algorithms]{ \label{fig:Test3_En_Alg2_DLN23}
			\begin{minipage}[t]{0.999\linewidth}
				\centering
				\includegraphics[width=5.2in,height=1.5in]{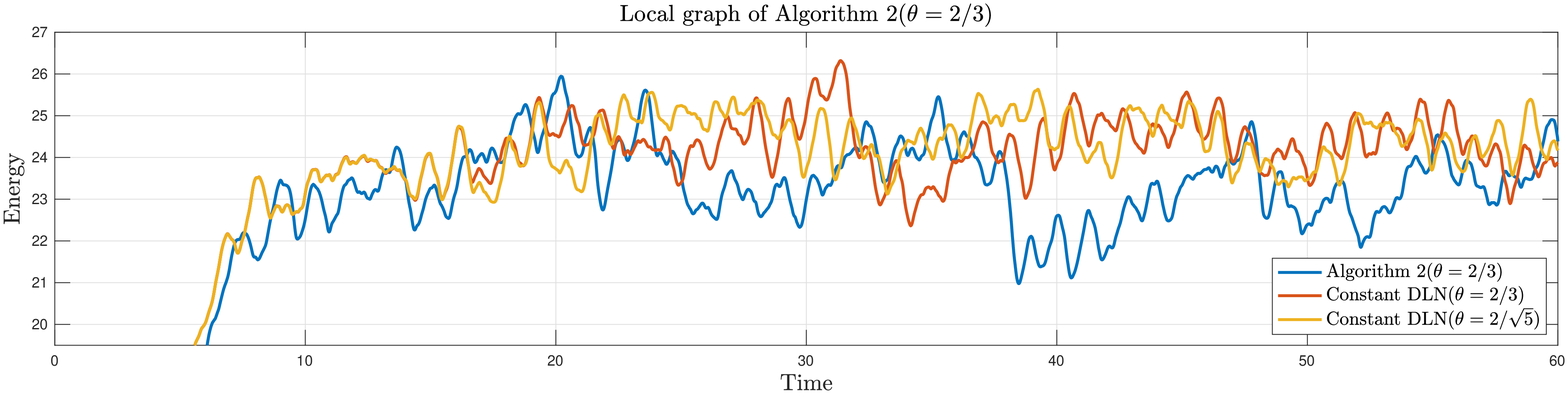}\\
				\vspace{0.02cm}
		\end{minipage} }
		\par
		\centering
		\vspace{-0.2cm}
		\caption{The energy of all algorithms are increasing at the start and then comes to the steady level 23 at time $t = 8$. The energy level of Algorithm \ref{alg:Adaptivity-AB2-like}($\theta = 2/\sqrt{5}$) is low compared to that of other adaptive algorithms and the energy of Algorithm \ref{alg:Adaptivity-AB2-like}($\theta = 2/\sqrt{5}$) is closer to that of reference algorithms.
		}
		\label{fig:En_2Doffset_AdaptDLN}
	\end{figure}

	\begin{figure}[ptbh]
		\centering
		\subfigure[Numerical dissipation: $\log_{10}(\mathcal{E}_{n+1}^{\tt ND})$]{ \label{fig:Test3_ND}
			\hspace{-0.99cm}
			\begin{minipage}[t]{0.999\linewidth}
				\centering
				\includegraphics[width=5.3in,height=2in]{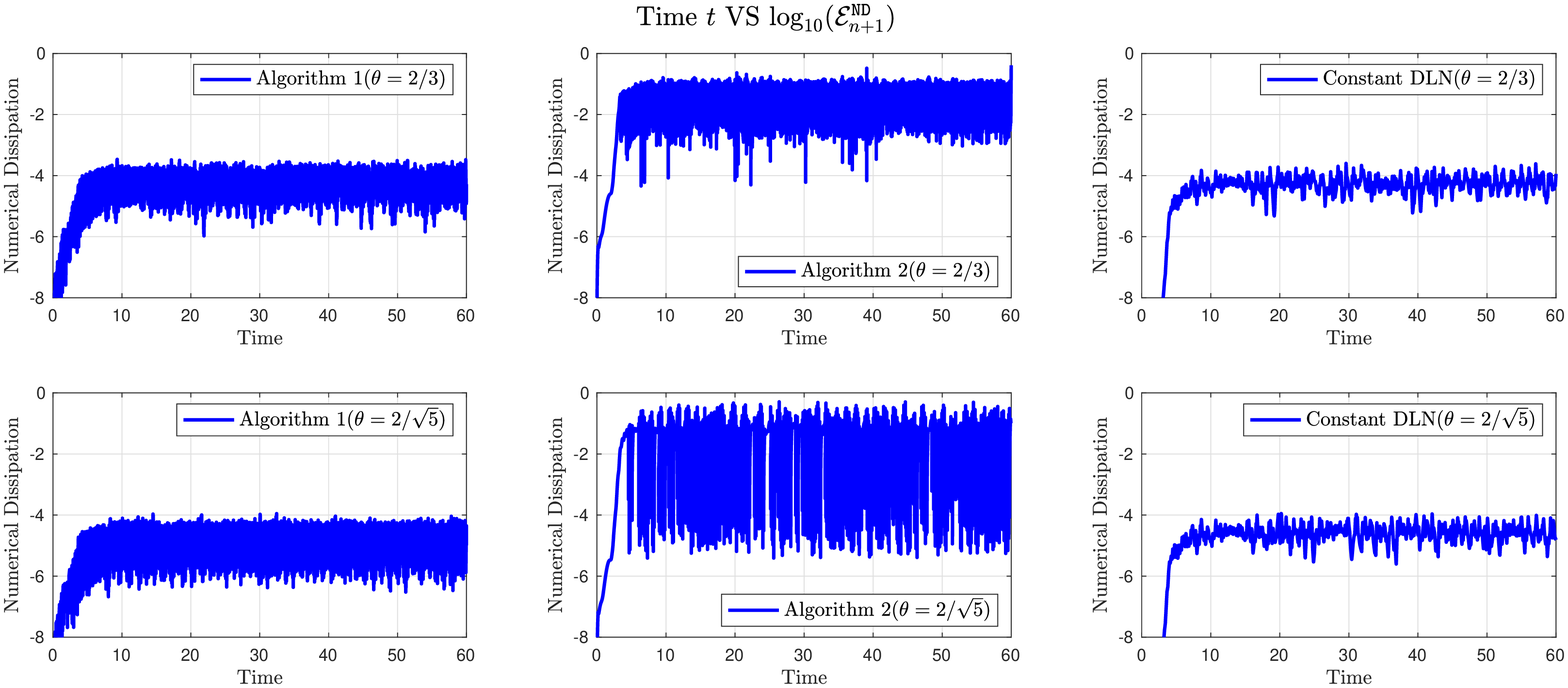}\\
				\vspace{0.02cm}
			\end{minipage}
			\quad}%

		\subfigure[Viscosity: $\log_{10}(\mathcal{E}_{n+1}^{\tt VD})$]{ \label{fig:Test3_VD}
			\hspace{-0.99cm}
			\begin{minipage}[t]{0.999\linewidth}
				\centering
				\includegraphics[width=5.3in,height=2in]{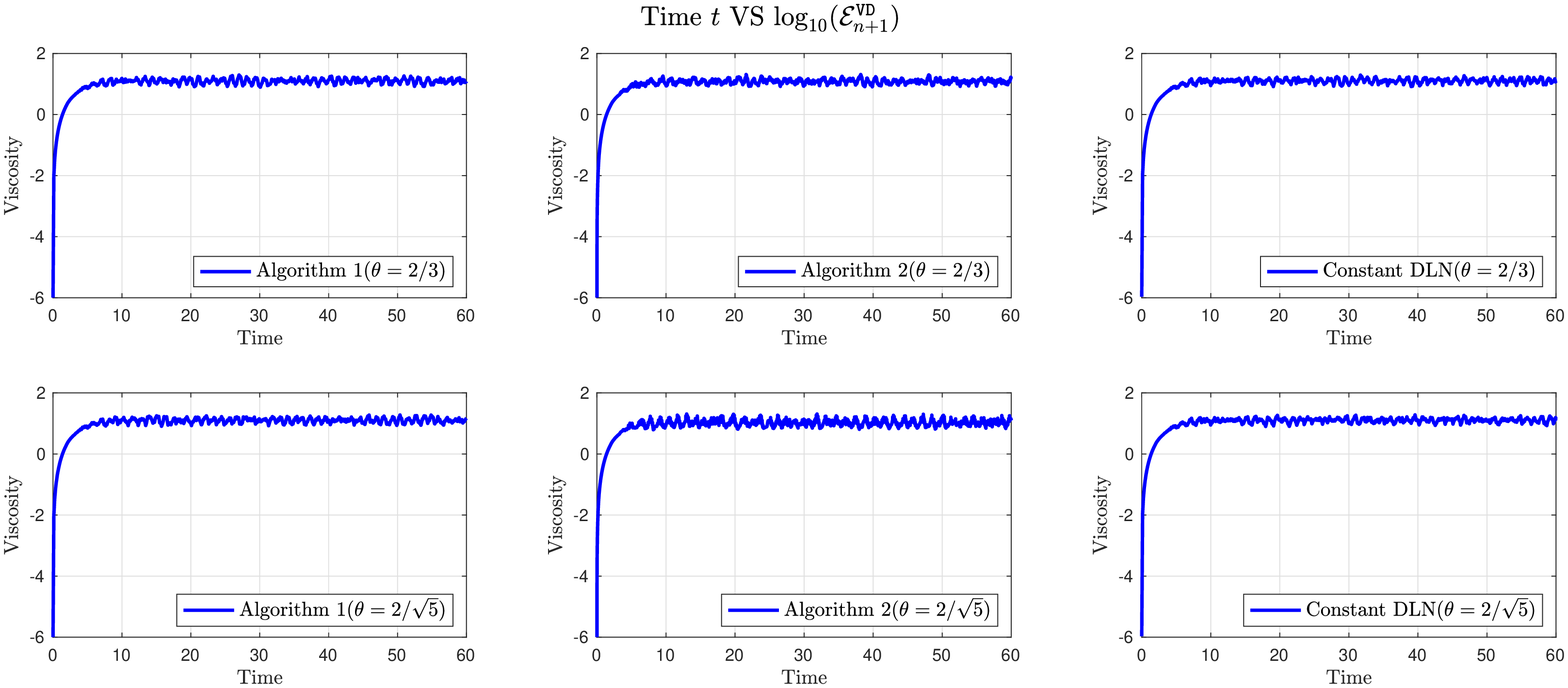}\\
				\vspace{0.02cm}
		\end{minipage}
		\quad}  	

		\subfigure[$\log_{10}(\widehat{T}_{n+1})$ and $\log_{10}(\chi_{n+1})$]{ \label{fig:Test3_Tol}
			\hspace{-0.9cm}
			\begin{minipage}[t]{0.48\linewidth}
				\centering
				\includegraphics[width=2.65in,height=1.8in]{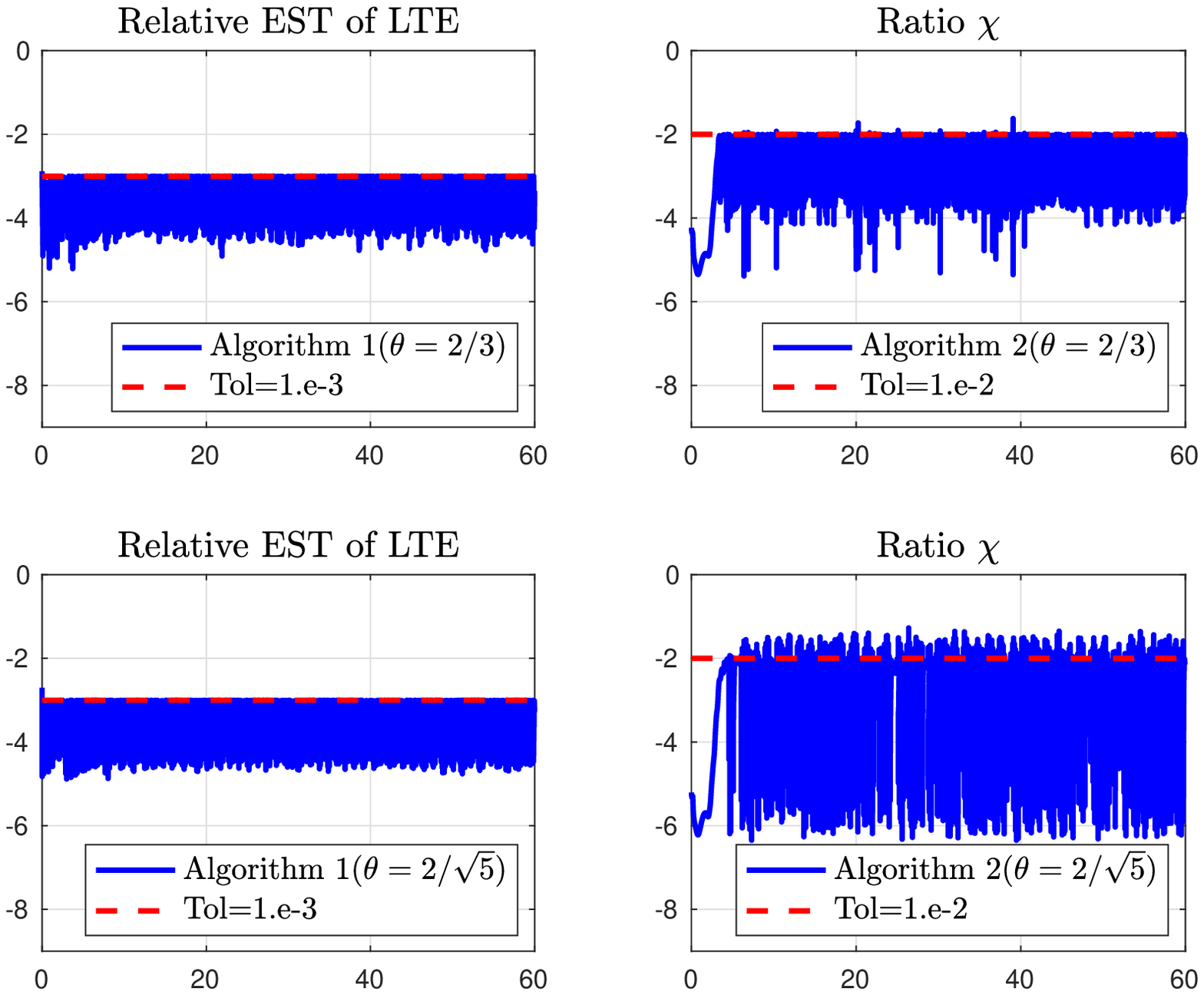}\\
				\vspace{0.02cm}
			\end{minipage}
			\quad}%
		\subfigure[Time step: $\log_{10}(k_{n})$]{ \label{fig:Test3_Step}
			\hspace{-0.9cm}
			\begin{minipage}[t]{0.49\linewidth}
				\centering
				\includegraphics[width=2.65in,height=1.8in]{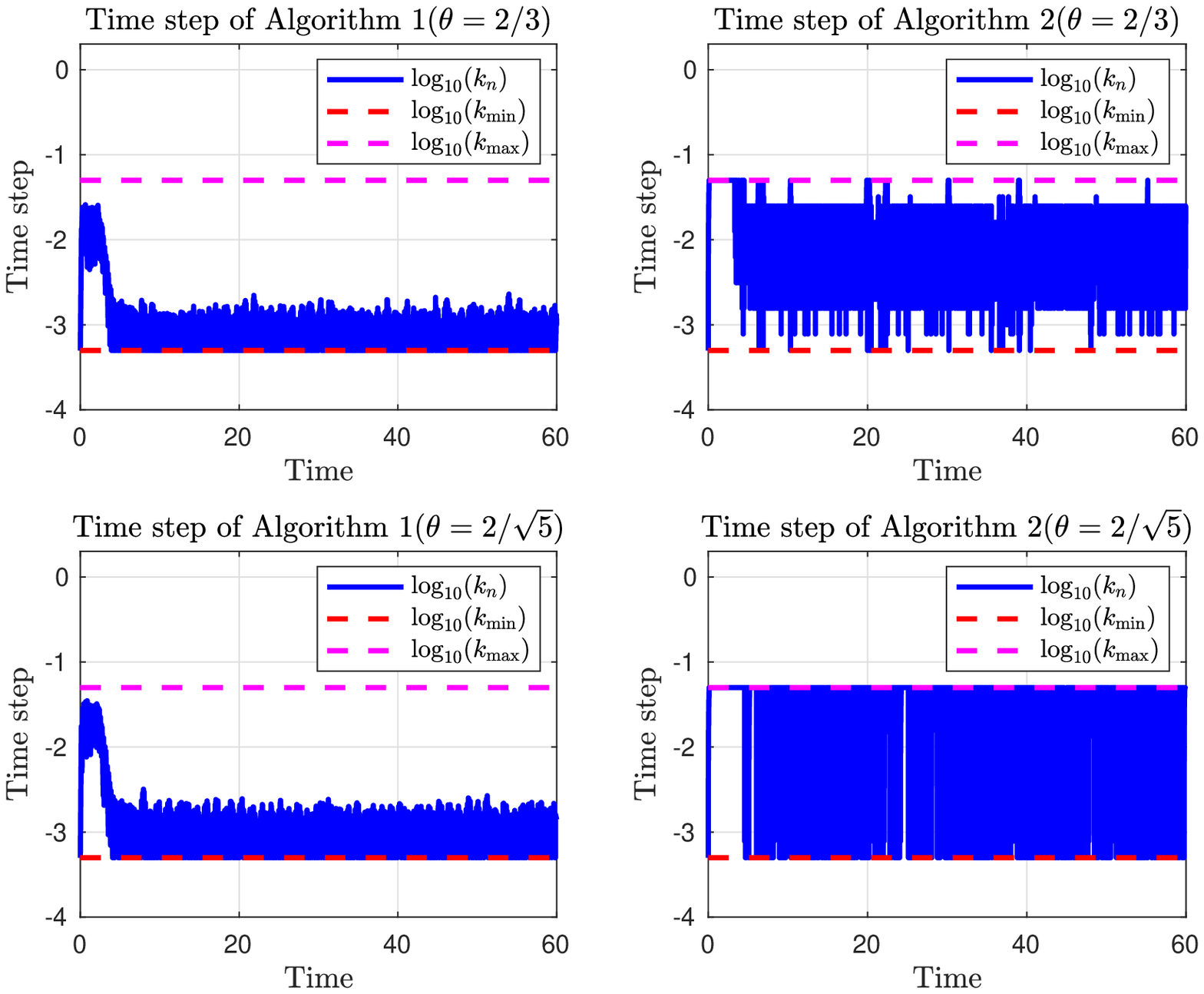}\\
				\vspace{0.02cm}
		\end{minipage}}
		\par
		\centering
		\vspace{-0.2cm}
		\caption{The numerical dissipation of Algorithm \ref{alg:Adaptivity-AB2-like} is at a level as low as that of reference algorithms while that of Algorithm \ref{alg:Adaptivity-ND} is much larger.
		All the algorithms have similar viscosity patterns.
		$\widehat{T}$ is always below the required tolerance ($1.\rm{e}-3$) thus the time steps of 
		Algorithm \ref{alg:Adaptivity-ND} never reach $k_{\rm{min}}$.
		The ratio $\chi$ goes above the required tolerance $1.\rm{e}-2$ frequently 
		and $k_{n} = k_{\rm{min}}$ occurs very often.
		}
		\label{fig:Test3_PlotSummary}
	\end{figure}

	\section{Conclusions}
	\label{sec:Conclusion}
	We propose the semi-implicit DLN scheme for the NSE and avoid non-linear solvers at each time step.
	$G$-stability of the DLN method results in the long-term, unconditional stability of the numerical solutions.
	In the error analysis, we prove that both the velocity and pressure of the variable time-stepping, semi-implicit scheme converge in second order under very moderate time conditions.
	Two adaptive algorithms based on local truncation error and numerical dissipation criteria are presented to improve time efficiency in practice. 
	The advantage of the semi-implicit DLN scheme is observed in numerical tests in 
	Subsection \ref{subsec:test-const}: the semi-implicit scheme obtains the same accuracy as the fully-implicit scheme and reduces the simulation time by half.
	Subsection \ref{subsec:test2} shows that two adaptive DLN algorithms obtain enough accuracy in energy and negligible numerical dissipation even the problem with a large Reynolds number has an increasing energy pattern.
	We verify in the 2D offset problem that the semi-implicit DLN scheme is unconditional, long-time stable in energy with any arbitrary sequence of time steps, and the adaptive DLN algorithm is much more efficient than constant time-stepping DLN scheme (taking less number of time steps and attaining similar magnitude in energy, numerical dissipation and viscosity).

	\section{Acknowledgement} 
	The author thanks Professor Catalin Trenchea (Department of Mathematics, University of Pittsburgh) for very helpful suggestions and discussions.

	\bibliographystyle{abbrv}
	\bibliography{ReferenceNew}

\end{document}